\newtheorem{theorem}{Theorem}[section]
\newtheorem{assumption}[theorem]{Assumption}
\newtheorem{claim}[theorem]{Claim}
\newtheorem{corollary}[theorem]{Corollary}
\newtheorem{definition}[theorem]{Definition}
\newtheorem{lemma}[theorem]{Lemma}
\newtheorem{proposition}[theorem]{Proposition}
\newtheorem{remark}[theorem]{Remark}
\newenvironment{proof}[1][Proof]{\noindent\textit{#1.} }{\hfill \rule{0.5em}{0.5em}}
\newcommand{\R}{\mathbb{R}}
\begin{document}

\title{\textbf{Existence and uniqueness of propagating terraces}}

\author{\textsc{Thomas Giletti$^{a}$\footnote{This work was initiated when the first author was visiting the University of Tokyo with the support of the Japanese Society for the Promotion of Science. The first author also acknowledges support from the NONLOCAL project (ANR-14-CE25-0013) funded by the French National Research Agency (ANR).} and Hiroshi Matano$^b$} \\
$^{a}${\small Univ. Lorraine, Institut Elie Cartan Lorraine, UMR 7502, Vandoeuvre-l\`{e}s-Nancy, F-54506, France}\\
$^b${\small Graduate School of Mathematical Sciences, University of Tokyo, Komaba, Tokyo 153-8914, Japan}
}

\maketitle

\begin{abstract}
This work focuses on dynamics arising from reaction-diffusion equations, where the profile of propagation is no longer characterized by a single front, but by a layer of several fronts which we call a propagating terrace. This means, intuitively, that transition from one equilibrium to another may occur in several steps, that is, successive phases between some intermediate stationary states. We establish a number of properties on such propagating terraces in a one-dimensional periodic environment, under very wide and generic conditions. We are especially concerned with their existence, uniqueness, and their spatial structure. Our goal is to provide insight into the intricate dynamics arising from multistable nonlinearities.
\end{abstract}

\section{Introduction}\label{sec:intro}

We consider the following reaction-diffusion equation in one space dimension:
\begin{equation}\tag{$E$}\label{eqn1}
\partial_t u(t,x)= \partial_{x} (a(x) \partial_x  u(t,x))+ f(x,u(t,x)), \ \forall (t,x) \in \R \times \R,
\end{equation}
where, throughout the paper, the functions $a$ and $f$ satisfy the following regularity and periodicity assumptions:
\begin{equation}\label{eqn:a}
0< a \in C^2 (\R,\R) \mbox{ and } a (x+L) \equiv a(x),
\end{equation}
\begin{equation}\label{eqn:f}
f \in C^1 (\R^2,\R) \mbox{ and } f(x+L,u) \equiv f(x,u),
\end{equation}
for some $L>0$.\\

Furthermore, we will always assume that there exist at least two equilibrium states, namely 0 and $p$. More precisely,
\begin{equation}\label{eqn:f_0}
f(x,0) \equiv 0,
\end{equation}
and there exists an $L$-periodic and positive stationary solution:
\begin{equation}\label{eqn:f_p}
\left\{
\begin{array}{l}
\partial_{x} (a(x) \partial_x  p) + f(x,p) =0, \ \forall x \in \R , \vspace{3pt}\\
p ( x+L) \equiv p (x), \ p(x)>0.
\end{array}
\right.
\end{equation}
Clearly, the function $p$ is also a stationary solution of the following equation, $L$-periodic counterpart of (\ref{eqn1}):
\begin{equation}\tag{$E_{per}$}\label{eqn1-per}
\left\{
\begin{array}{l}
\partial_t u(t,x)= \partial_{x} (a(x) \partial_x  u(t,x)) + f(x,u(t,x)), \ \forall (t,x) \in \R \times \R, \vspace{3pt}\\
u (t, x+L) \equiv u (t,x), \ \forall (t,x) \in \R \times \R.
\end{array}
\right.
\end{equation}
It is obvious that any solution of (\ref{eqn1-per}) is also a solution of (\ref{eqn1}).\\

The goal of this paper is to investigate propagation dynamics between the two equilibria 0 and $p$, for very large classes of nonlinearities, including but not limited to the classical monostable, ignition or bistable cases. The present work is a continuation of our  previous paper~\cite{DGM}, where we studied propagation from~0 to $p$ under an additional stability assumption on $p$. It also generalizes the earlier work of Fife and Mc Leod~\cite{FifMcL77}, in which some related problems were studied for spatially homogeneous multistable nonlinearities.

Our analysis will highlight complex dynamics which, unlike in the standard cases, cannot be described by a single front. They will instead involve finite sequences of fronts, that we will call propagating terraces. We set forth this notion as a natural generalization of traveling waves, providing a new and robust framework for describing dynamics of a highly general class of periodic reaction-diffusion equations.

\subsection{Propagating terraces: some definitions}

The key concept throughout this paper is that of a propagating terrace, which we define below. It is a generalization of the classical notion of a traveling wave.

Let us first recall the notion of pulsating traveling waves~\cite{BH02,Wein02}. A pulsating traveling wave solution of \eqref{eqn1} connecting two periodic stationary solutions $p_1$ and $p_2$ of \eqref{eqn1-per} with speed $c$ is a particular entire solution of the type $U(x-ct,x)$ where $U(z,x)$ is periodic in its second variable, and satisfies the asymptotics $U (+\infty,\cdot)=p_2 (\cdot)$ and $U(-\infty,\cdot) = p_1 (\cdot)$. Moreover, when the speed $c=0$, we say that it is a stationary wave.

\begin{definition}\label{def:terrace}
A \textbf{propagating terrace} $\mathcal{T}$ connecting 0 to $p$ is a couple of two finite sequences $(p_k)_{0 \leq k \leq N}$ and $(U_k)_{1 \leq k \leq N}$ such that:
\begin{itemize}
\item The $p_k$ are $L$-periodic stationary solutions of $($\ref{eqn1}$)$ satisfying
$$p=p_0 >p_1 >...>p_N =0.$$
\item For any $1 \leq k \leq N$, $U_k$ is a pulsating traveling wave solution of $($\ref{eqn1}$)$ connecting $p_k$ to $p_{k-1}$ with speed $c_k \in \R$.
\item The sequence $(c_k)_k$ satisfies $$c_1 \leq c_2 \leq ... \leq c_N .$$
\end{itemize}
Moreover, we will refer to the sequence $(p_k)_{1\leq k \leq N}$ as the \textbf{platforms} of the terrace~$\mathcal{T}$, and hence to $N$ as the number of platforms of $\mathcal{T}$.
\end{definition}
Similarly, for any pair of $L$-periodic stationary solutions $p$, $q$ satisfying $p  > q$, we can define a propagating terrace connecting $q$ to $p$ in a completely analogous manner.\\

Propagating terraces were already introduced under this name in our previous work~\cite{DGM}. The definition we use here is slightly more general because the speeds~$c_k$ are no longer required to be nonnegative and, therefore, the upper and lower parts of the terrace may spread in opposite directions. This is natural in some situations, such as when 0 and $p$ are both unstable (for instance, in the so-called heterozygote superior case from genetics~\cite{AW75}). Incidentally, as mentioned in our previous paper~\cite{DGM}, the notion of terrace already appears in the work of Fife and Mc Leod~\cite{FifMcL77} under the name ``minimal decomposition". However, their analysis was restricted to homogeneous multistable nonlinearities, for which phase plane methods suffice to determine the structure of the terrace. Our results will extend theirs to more general classes of nonlinearities with spatial periodicity, for which the usual ODE tools no longer work. We also refer to~\cite{Risler1,Risler2} for related results in the case of homogeneous systems with a gradient structure, and to~\cite{Polacik} for convergence results in higher dimension by symmetrisation techniques.

In the above definition, ordering the speeds of terraces is fundamental for their purpose. Indeed, by analogy with single traveling waves in the standard cases~\cite{Bramson83,FifMcL77,Giletti}, they are to describe dynamics arising in the large time behavior of solutions of the Cauchy problem associated with equation \eqref{eqn1}. For instance, convergence to a terrace from Heaviside type initial data $H(a-x) p(x)$, where $a \in \R$ and $H$ is the classical Heaviside function, was already proved in~\cite{DGM} under the additional assumption that the solution converges locally uniformly to~$p$ for some compactly supported initial datum. Such solutions are close to~$0$ (respectively $p$) on the far right (respectively left) of the domain for any positive time, so that lower level sets must spread faster to the right than the upper ones. In other words, a terrace should be of an intuitively appropriate shape, as illustrated in Figure~\ref{fig:propag_terrace}, at least as $t \rightarrow +\infty$.
\begin{figure}[h]
\centering
\includegraphics[width=1\textwidth]{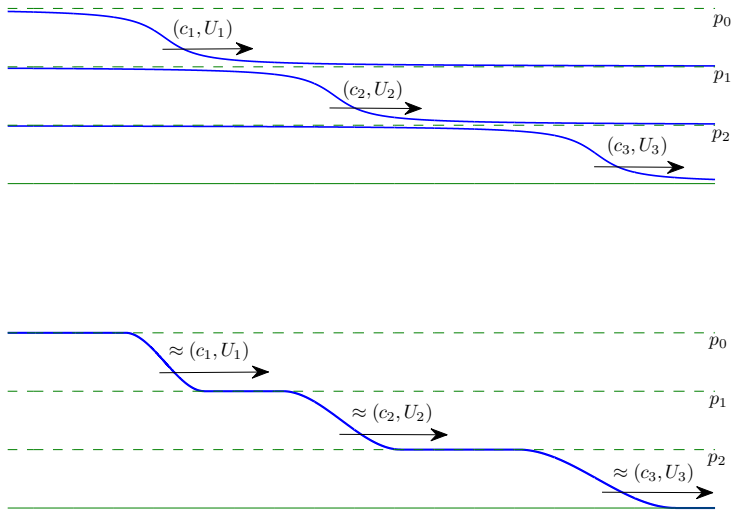}
\caption{(above) A three platforms terrace;\\ (below) A terrace-shaped profile of propagation}
\label{fig:propag_terrace}
\end{figure}
\begin{remark}
We point out to the reader that, for convenience, we always say that a wave connects $q_2$ to $q_1$ if $q_2$ is the limiting state on the right end of the spatial domain, and $q_1$ on the left. However, when the speed is negative, the front actually goes to $q_2$ in large positive time.
\end{remark}

\subsubsection*{Some further definitions}

Some particular terraces distinguish themselves and seem to play a more important role. This is to be expected from standard cases. Indeed, for a classical monostable KPP nonlinearity, it is known that there exists a continuum of admissible speeds $[c,+\infty)$ for traveling waves. However, only the wave with minimal speed attracts large classes of solutions, such as those with compactly supported initial data (see~\cite{Bramson83,Lau85,Uchi78} in the simpler homogeneous case, and more recently~\cite{Giletti,HNRR} for the general periodic framework). 

Here we note that the wave with minimal speed can also be characterized as the ``steepest" one among all the traveling waves connecting the same stationary states, in the sense to be defined in Definition~\ref{def:steep} below. The link between speed and steepness has been known in many typical cases, such as in the homogeneous framework~(see for instance the seminal paper of Kolmogorov, Petrovsky and Piskunov~\cite{KPP}, as well as~\cite{FifMcL77} for more general nonlinearities), and will be proved here in Section~\ref{sec:minspeed} for large classes of spatially periodic nonlinearities.

Therefore, it is also natural to classify traveling waves (or more generally, in the present framework, terraces) according to their steepness, and to expect the steepest one to play the most important role in the dynamics. This was already fundamental in our previous work~\cite{DGM}, and will again be in the present work.

\begin{definition}\label{def:steep}
Let two functions $u_1 (t,x)$ and $u_2 (t,x)$. We say that $u_1$ is \textbf{steeper than} $u_2$ if for any $x_1$, $t_1$ and $t_2$  in $\R$,
$$u_1 (t_1, x_1 ) = u_2 (t_2,x_1) \Longrightarrow 
\left\{
\begin{array}{l}
u_1 (t_1,x) \geq u_2 (t_2,x) \mbox{ for any } x \leq x_1 , \vspace{3pt} \\
 u_1 (t_1,x) \leq u_2 (t_2,x) \mbox{ for any } x \geq x_1.
\end{array}
\right.
$$
\end{definition}

Here we remark that $u_1$ and $u_2$ are compared at two arbitrary time moments~$t_1$ and~$t_2$ not necessarily equal. Therefore, according to this definition, steepness is to be understood here up to any time shift. The definition is otherwise intuitive, as it simply states that a function is steeper than another if they (again, up to any time shift) can only intersect once, and that the steepest function then has to be below the other on the right and above on the left. Of course, it is trivial to extend this definition to functions that depend on $x$ only, seeing them as constant with respect to time. The reader should also note that according to this definition, if the images of $u_1$ and $u_2$ are disjoint, then they are steeper than each other.\\

We now introduce, as announced above, some very steep particular propagating terraces:
\begin{definition}\label{def:terrace}
A propagating terrace $\mathcal{T} = \left( (p_k)_{0 \leq k \leq N},(U_k)_{1 \leq k \leq N} \right)$ is said to be \textbf{semi-minimal} if, for each $k$, $U_k$ is steeper than any other traveling wave that connects $p_{k-1}$ to $p_k$.

The terrace $\mathcal{T}$ is called \textbf{minimal} if it is semi-minimal and satisfies $$\{ p_k \} \subset \{q_k \}$$ for any other propagating terrace $( (q_k)_k , (V_k)_k )$ that connects 0 to~$p$.
\end{definition}

The fact that the set $\{p_k\}$ of the platforms of a minimal terrace is included in any set of platforms of any other terrace can indeed be seen as a steepness property, as it implies that any $p_k$ is steeper than any part of any terrace. We will in fact prove that such terraces have even stronger steepness properties, such as being steeper than any other entire solution of~\eqref{eqn1}. 

To convince oneself of the strength of such properties, one could already easily check that the minimal terrace, if it exists and if its speeds are all non zero, is unique up to time shifts. More precisely, all minimal terraces share the same platforms and, between each consecutive platforms, consist of identically equal up to time shift traveling waves. This is Theorem~\ref{th:unique_min} below.\\

Let us now give one last definition, which we draw from~\cite{FifMcL77}. Unlike the two above, it is actually slightly weaker than our notion of propagating terrace.
\begin{definition}\label{def:decomp}
A \textbf{decomposition} between 0 and $p$ is a finite sequence of $L$-periodic stationary solutions $(p_k)_{0 \leq k \leq N}$ such that $p_0 = p > p_1 > ... > p_N = 0$, and for any $1 \leq k \leq N$, there exists a pulsating traveling wave $U_k$ connecting $p_k$ to $p_{k-1}$.

We say that 0 and $p$ are \textbf{connected} if there exists a single traveling wave connecting~0 to~$p$.
\end{definition}

This is very similar to the definition of platforms of a propagating terrace, except that it does not require the speeds of the traveling waves to be ordered anymore. This means as explained above that a decomposition, unless of course it is also a terrace, does not provide a rightful candidate to look at the large time profile of solutions of \eqref{eqn1}. However, on the other hand, existence of a decomposition is much easier to check: for instance, it exists if~$f$ is homogeneous and has a finite number of zeros as a function of~$u$, or more generally, if~$f$ can be rewritten as some concatenation of standard nonlinearities. We will show with Theorem~\ref{th:exists} that existence of a decomposition is actually enough to ensure existence of terraces.

\subsection{Some assumptions}\label{sec:assumptions}

We have already highlighted above the particular attention we will pay to the steepness of traveling waves and propagating terraces, and to how it relates to their speeds. We are only able to establish such a relation, which we will detail in Section~\ref{sec:minspeed} (see in particular Theorem~\ref{th:lem_speeds}), under additional assumptions. Let us briefly enounce them below.\\

First, we will denote, for any $L$-periodic function $g$, by $\mu_g$ the principal eigenvalue of the following problem:
\begin{equation*}
\left\{
\begin{array}{l}
\displaystyle \partial_{x} (a \,\partial_x \phi ) + g \phi  =  \mu_g \phi \ \mbox{ in } \R, \vspace{5pt}\\
\phi >0 \mbox{ and  $L$-periodic}.
\end{array}
\right.
\end{equation*}
When $q$ is a stationary solution of (\ref{eqn1-per}), it is commonly said to be linearly stable (respectively unstable) when $\mu_g <0$ (respectively $\mu_g >0$) with the function $g(x) = \partial_u f(x,q (x))$.

Occasionally, we will be led to consider the following two assumptions on the stability of equilibria:
\begin{assumption}\label{assumption-speed1}
For any $L$-periodic positive stationary solution $q$, with $0 \leq q \leq p$, that is stable from below with respect to~$($\ref{eqn1-per}$)$, there exist $\delta >0$ and an $L$-periodic function $g$ such that $\mu_g \leq 0$ and
$$\partial_u f (x,u) \leq g (x) \; \mbox{ for all } x \in  \R \mbox{ and } u \in \left[ q(x)-\delta, q(x)\right].$$
For any $L$-periodic positive stationary solution $q$, with $0 \leq q \leq p$, that is stable from above with respect to~$($\ref{eqn1-per}$)$, there exist $\delta >0$ and an $L$-periodic function $g$ such that $\mu_g \leq 0$ and
$$\partial_u f (x,u) \leq g (x)\; \mbox{ for all } x \in  \R \mbox{ and } u \in \left[ q(x), q(x)+\delta\right].$$
\end{assumption}

\begin{assumption}\label{assumption-speed2}
For any $L$-periodic positive stationary solution $q$, with $0 < q < p$, that is stable with respect to~$($\ref{eqn1-per}$)$ from either below or above, there exist $\delta >0$ and an $L$-periodic function $g$ such that $\mu_g \leq 0$ and
$$\partial_u f (x,u) \leq g (x)\; \mbox{ for all } x \in  \R \mbox{ and } u \in \left[ q(x)-\delta, q(x)+\delta \right].$$
Furthermore, there exist $\delta >0$ and an $L$-periodic function $g$ such that $\mu_g \leq 0$ and
$$\partial_u f (x,u) \leq g (x)\; \mbox{ for all } x \in  \R \mbox{ and } u \in \left[ 0, \delta \right] \cup \left[p(x)-\delta,p(x) \right].$$
\end{assumption}
Let us remark that, when \eqref{eqn1-per} does not admit any degenerate equilibrium, then Assumption~\ref{assumption-speed1} is satisfied, as well as Assumption~\ref{assumption-speed2} provided that 0 and~$p$ are linearly stable. Indeed, for any linearly stable stationary solution $q$ of \eqref{eqn1-per}, one can choose $\varepsilon$ small enough so that $\mu_g \leq 0$ with $g = \partial_u f (x,q) + \varepsilon$, and $\partial_u f (x,u) \leq g(x)$ in some set $(x,u) \in \R \times \left[q(x)-\delta,q(x)+\delta\right]$. By degenerate equilibrium, we mean here that it is neither linearly stable nor linearly unstable.
%
%

\subsection{Main results}

Before announcing our results, we remind the reader that the only standing hypotheses are the regularity and periodicity of $a$ and $f$ \eqref{eqn:a}-\eqref{eqn:f}, and the existence of two periodic stationary states \eqref{eqn:f_0} and \eqref{eqn:f_p}. Whenever we make any of the additional Assumptions~\ref{assumption-speed1} or~\ref{assumption-speed2}, we will state it explicitly.

We now proceed to the statements of our main results, some of which we have already evoked above. As announced, the purpose of our notion of propagating terraces is to give relevant insight on the dynamics of equation~\eqref{eqn1} for complex multistable nonlinearities. This approach was already justified in~\cite{DGM} where convergence to a minimal terrace was shown from an Heaviside type initial datum, in the case where it spreads to the right with positive speed.

Among our results, we will give a new and more general convergence theorem, that further strenghtens the reliability and robustness of the notion of terraces. However, in spite of its relevance, this notion needs to be better understood in view of its application. One could for instance wonder, for a particular~$f$, what we can foreknow on the shape of terraces, and in particular whether we can predict which platforms it should contain. This challenge will be addressed through looking at important properties of terraces, in particular at their uniqueness and steepness. We will illustrate, later on this paper, how those universal results do provide the wanted tools, and how they can be applied to particular examples.\\

We begin with an existence result for the propagating terrace. Apart from the standing hypotheses \eqref{eqn:a}, \eqref{eqn:f}, \eqref{eqn:f_0} and \eqref{eqn:f_p}, this result does not require any of the additional assumptions mentioned in subsection~\ref{sec:assumptions}: 
\begin{theorem}[From decompositions to terraces]\label{th:exists}
The following three properties are equivalent:
\begin{enumerate}[$(1)$]
\item There exists a decomposition between 0 and $p$.
\item There exists a propagating terrace connecting 0 to $p$.
\item There exists a minimal propagating terrace $((p_k)_k,(U_k)_k)$ connecting 0 to~$p$; moreover, any $p_k$ and $U_k$ is steeper than any other entire solution that lies between 0 and~$p$.
\end{enumerate}
\end{theorem}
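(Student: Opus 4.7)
The plan is to establish the chain $(3) \Rightarrow (2) \Rightarrow (1) \Rightarrow (3)$. The first two implications are essentially definitional: a minimal terrace is a terrace by definition, and from any terrace $((p_k),(U_k))$ the family $(p_k)$ together with the waves $(U_k)$ is a decomposition once one discards the ordering of the speeds. So the substantive content is $(1) \Rightarrow (3)$, which I would split into three stages: a construction of steepest waves between consecutive stationary solutions, a refinement procedure producing a terrace, and a parabolic intersection argument yielding the strong steepness property.

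For the first stage, I would show that given any two $L$-periodic stationary solutions $q_1 < q_2$ in $[0,p]$ such that at least one pulsating traveling wave connects $q_1$ to $q_2$, there exists a steepest such wave, unique up to time shift. The construction is a maximality argument: starting from any wave and passing to steeper and steeper ones (compared after suitable time shifts), parabolic regularity and the strong maximum principle allow one to extract a limit wave that still connects the same pair. A Zorn-type argument then produces a maximal element, and uniqueness up to time shift follows directly from the definition of steepness.

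For the second stage, starting from the given decomposition $(q_j)$ I would refine iteratively by inserting new platforms whenever the currently selected steepest waves fail to have non-decreasing speeds. Concretely, if $c_k > c_{k+1}$ between three consecutive platforms $p_{k-1}>p_k>p_{k+1}$, I claim a further periodic stationary solution $q$ strictly between $p_{k+1}$ and $p_{k-1}$, distinct from $p_k$, must exist. Such a $q$ can be extracted from the long-time behavior of the Cauchy problem with initial data obtained by gluing far-shifted copies of $U_k$ and $U_{k+1}$: the incompatible ordering of the speeds forces the solution to remain trapped between $p_{k-1}$ and $p_{k+1}$ and, by a standard convergence argument, to produce an intermediate periodic stationary solution. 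By compactness of the set of $L$-periodic stationary solutions with range in $[0,p]$, this refinement stabilizes and yields a terrace whose waves are each steepest in their class.

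The principal obstacle is the final claim that each $p_k$ and each $U_k$ is steeper than any other entire solution of~\eqref{eqn1} with range in $[0,p]$. My plan is to invoke the Sturmian-type theorem for linear parabolic equations applied to the difference $w(t,x) := U_k(t_1+t,x)-v(t_2+t,x)$ for an arbitrary entire solution $v$: the number of sign changes of $w(t,\cdot)$ in $x$ is finite, nonincreasing in $t$, and drops strictly at multiple zeros. Combined with the known asymptotics $U_k(\pm\infty,\cdot)\in\{p_{k-1},p_k\}$, this forces at most one intersection between $U_k(t_1,\cdot)$ and $v(t_2,\cdot)$, which is exactly the inequality of Definition~\ref{def:steep}. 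The same argument with $p_k$ in place of $U_k$ handles the platform case, and the inclusion $\{p_k\}\subset\{q_k\}$ in any competing decomposition follows by applying this strong steepness with each $q_k$ in the role of $v$, viewed as a stationary entire solution: a $q_k$ strictly between two consecutive minimal platforms would force a two-sided asymptotic ordering incompatible with the at-most-one-intersection property.
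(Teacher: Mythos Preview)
Your reduction $(3)\Rightarrow(2)\Rightarrow(1)$ is fine, and the overall plan for $(1)\Rightarrow(3)$ is coherent in spirit, but Stage~3 contains a genuine gap that undermines the whole argument. The Sturmian zero-number principle tells you that the number of sign changes of $w(t,\cdot)=U_k(t_1+t,\cdot)-v(t_2+t,\cdot)$ is nonincreasing in~$t$ and drops at multiple zeros; it does \emph{not} tell you that this number is ever at most one. For an arbitrary entire solution~$v$ between~$0$ and~$p$, there is no initial time and no a~priori bound on the zero number, and the asymptotics $U_k(\pm\infty,\cdot)\in\{p_{k-1},p_k\}$ say nothing useful when~$v$ is not itself trapped between $p_k$ and $p_{k-1}$. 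So your sentence ``this forces at most one intersection'' is exactly the missing step, and it is not a step one can fill by a direct intersection-counting argument on abstractly constructed waves. What you would actually obtain from Stages~1--2 (if they worked) is a \emph{semi-minimal} terrace --- each $U_k$ steepest among waves connecting the same pair --- and the paper's Theorem~\ref{th:unique_semi} shows that upgrading semi-minimal to minimal requires either distinct speeds or Assumption~\ref{assumption-speed1}, neither of which is available here.

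The paper circumvents this entirely by a different construction: it takes the solution~$u$ with Heaviside initial datum $u_0=H(-x)p(x)$, which is \emph{trivially} steeper than every entire solution between~$0$ and~$p$, and extracts the terrace as a family of $\omega$-limit orbits of~$u$ in suitably chosen moving frames (tracking level sets via the times $\tau_n(\alpha)$). Steepness against all entire solutions is then inherited automatically through Lemmas~\ref{lem:steep_arg1}--\ref{lem:steep_arg2} and~\ref{lem:omega-steep}, because it held at $t=0$ and is preserved under time evolution and pointwise limits. The ordering of the speeds also falls out of the construction (via monotonicity of~$\tau_n(\alpha)$ in~$\alpha$), so no refinement procedure is needed. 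Your Stage~2 has its own issues --- compactness of the set of periodic stationary solutions does not imply finiteness, so the refinement need not terminate --- but the decisive difference is that the paper builds steepness into the construction rather than trying to verify it afterwards.
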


Note that it is immediate, from our definitions, that $(3) \Rightarrow (2) \Rightarrow (1)$. Therefore, the main point of this theorem is that existence of any decomposition is enough to guarantee existence of a minimal terrace, and that every component of this minimal terrace is steeper than any entire solution between 0 and~$p$. Here, we highlight again the importance of such steepness. Indeed, the fact that any platform~$p_k$ of a minimal terrace is steeper than any other entire solution implies that~$p_k$ is included in any decomposition connecting 0 and~$p$. In other words, it is an immediate corollary of the above theorem that, from any decomposition connecting 0 and~$p$, one can extract the platforms of minimal terraces:
\begin{corollary}\label{th:exists_cor}
If  $((p_k),(U_k))$  is a minimal terrace connecting 0  to $p$, 
then for any decomposition  $(q_k)$ between 0 and $p$, one has 
$$\{ p_k \} \subset \{q_k\}.$$
\end{corollary}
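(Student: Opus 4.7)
Write $\mathcal{T}=((p_k),(U_k))$ for the given minimal terrace. The plan is to combine the minimality of $\mathcal{T}$ with a suitable terrace extracted from the decomposition. By Definition~\ref{def:terrace}, the set $\{p_k\}$ is contained in the platform set of every other propagating terrace connecting $0$ to $p$; so it suffices to produce an auxiliary terrace $\mathcal{T}'$ from $0$ to $p$ whose platforms are all included in $\{q_k\}$.

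The construction of $\mathcal{T}'$ would proceed by iterative refinement of $(q_k)$. A decomposition fails to be a terrace only because the speeds $c(V_k)$ of the waves of $(q_k)$ may not be monotone. Whenever two consecutive waves have speeds in the wrong order, I would invoke the speed-steepness link developed in Section~\ref{sec:minspeed} to replace them by a single traveling wave that skips the intermediate stationary level, thus collapsing that level from the sequence. This operation only removes platforms from $\{q_k\}$, never introduces new ones, so the evolving platform set stays inside $\{q_k\}$ at every step. Iterating until the speeds become monotone yields a terrace $\mathcal{T}'$ whose platform set is included in $\{q_k\}$; applying the minimality of $\mathcal{T}$ to $\mathcal{T}'$ then delivers the corollary.

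A more direct approach is also available via Theorem~\ref{th:exists}~(3): each $p_k$ is steeper than every entire solution lying between $0$ and $p$, in particular steeper than each stationary $q_j$ and each wave $V_j$ of the decomposition. A first observation, combining steepness with $L$-periodicity, is that if $p_k(x_0)=q_j(x_0)$ at any point $x_0$ then $p_k\equiv q_j$: applying Definition~\ref{def:steep} at $x_0$ and at the translated intersection $x_0+L$ forces $p_k=q_j$ on $[x_0,x_0+L]$, and periodicity propagates the equality everywhere. Hence, if $p_k\notin\{q_j\}$ there is a unique $j^*$ such that $q_{j^*}<p_k<q_{j^*-1}$ strictly. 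Running down the sequence $p_k,p_{k+1},\ldots,p_N=0$, either some wave $U_\ell$ strictly crosses $q_{j^*}$, in which case the steepness of $U_\ell$ over $q_{j^*}$ combined with the right asymptotic $U_\ell(+\infty,\cdot)=p_{\ell-1}$ and $L$-periodicity forces $p_{\ell-1}\leq q_{j^*}$, contradicting $p_{\ell-1}>q_{j^*}$ strictly; or there exists $\ell$ with $p_\ell=q_{j^*}$, and by the symmetric argument upward some $\ell'$ with $p_{\ell'}=q_{j^*-1}$.

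The principal obstacle is the residual case where the minimal terrace hits both $q_{j^*}$ and $q_{j^*-1}$ as platforms but still keeps $p_k$ strictly between them. This is precisely where the speed-steepness link is indispensable: one must compare the speeds of the sub-block of $\mathcal{T}$ connecting those two levels with the speed of the single wave $V_{j^*}$ of the decomposition, and show that substituting this sub-block by $V_{j^*}$ preserves the monotonicity of the global speed sequence. Once that is secured, the auxiliary terrace $\mathcal{T}'$ is produced and minimality of $\mathcal{T}$ yields the desired inclusion $\{p_k\}\subset\{q_k\}$.
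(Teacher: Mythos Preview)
You identify the right tool in your second approach --- Theorem~\ref{th:exists}~(3) gives that each platform $p_k$ is steeper than every entire solution between $0$ and $p$, in particular steeper than the wave $V_{j^*}$ of the decomposition --- but then you never actually use the comparison with $V_{j^*}$. This is the entire content of the paper's argument, and it finishes the proof immediately: if $q_{j^*}<p_k<q_{j^*-1}$ strictly, then at any fixed time $t_0$ the wave $V_{j^*}(t_0,\cdot)$ satisfies $V_{j^*}(t_0,x)-p_k(x)\to q_{j^*-1}(x)-p_k(x)>0$ as $x\to-\infty$ and $\to q_{j^*}(x)-p_k(x)<0$ as $x\to+\infty$, so the two functions cross at some $x_0$. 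Steepness of $p_k$ over $V_{j^*}$ then forces $p_k(x)\geq V_{j^*}(t_0,x)$ for all $x\leq x_0$, which contradicts the left asymptotic just stated. No residual case arises.

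By contrast, your argument compares $p_k$ only with the stationary $q_j$'s and with the waves $U_\ell$ of the \emph{minimal} terrace, leading you into a case analysis that does not close. Your ``principal obstacle'' is an artifact of this detour, and your proposed resolution --- invoking the speed-steepness link of Section~\ref{sec:minspeed} --- is not available here: Theorem~\ref{th:lem_speeds} and the collapsing procedure you describe both rely on Assumption~\ref{assumption-speed1} or~\ref{assumption-speed2}, which are \emph{not} hypotheses of the corollary. The same objection applies to your first approach (iteratively collapsing the decomposition): Proposition~\ref{proposition2}, which is what you are effectively invoking, needs Assumption~\ref{assumption-speed1} for the equal-speed case and in any event only handles consecutive levels one at a time.

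One small point you do need, and which the paper notes just after the corollary: Theorem~\ref{th:exists}~(3) asserts the steepness property for \emph{some} minimal terrace, not a priori for the given one. But all minimal terraces share the same platforms (immediate from the definition), so the platforms of your given terrace coincide with those of the terrace furnished by Theorem~\ref{th:exists}~(3), and the steepness transfers.
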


Note that although the minimal terrace may not be unique, it is trivial that all minimal terraces share the same platforms, so that this corollary indeed easily follows from the above theorem.\\

In our previous work~\cite{DGM}, we only treated terraces that are moving to the right, that is, with $c_k >0$ for all $k$. Indeed we assumed the existence of a solution with compactly supported initial datum that converges to~$p$ from below as $t \to +\infty$, locally uniformly on $\mathbb{R}$. In such a situation, it is clear that $c_k >0$ for every~$k$. In the present paper, we do not make such an assumption. As a result, we allow $c_k <0$ for some integers~$k$. Thus, our Theorem~\ref{th:exists} completely covers the earlier existence result of Fife and McLeod~\cite{FifMcL77} for the homogeneous case, and generalizes their result to spatially periodic equations. Our result also covers such cases as multi-layered monostable nonlinearities that were not treated in~\cite{FifMcL77}.

It should be noted that there does not always exist a decomposition between~0 and $p$. A simple counter example is the case where $f \equiv 0$. A slightly less trivial example is a reversed combustion nonlinearity, that is $f(u) =0$ ($u=0$, $\theta \leq u \leq 1$) and $f(u) >0$ ($0< u < \theta$). On the other hand, apart from these highly degenerate nonlinearities, all the nonlinearities that we find in many standard physical models possess a decomposition. 
The following proposition, which is not entirely new, gives a simple sufficient condition for the existence of a decomposition.
\begin{proposition}\label{proposition}
Assume that there exist a finite number of stationary solutions $p =: q_0 > q_1 > ... > q_{m-1} > q_m := 0$ of \eqref{eqn1-per} such that there exists no stationary solution strictly between $q_{j-1}$ and $q_j$ for any $j=1,...,m$.
Then there exists a decomposition between 0 and $p$, hence a minimal terrace.
\end{proposition}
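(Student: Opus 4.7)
The plan splits in two: reducing the proposition to the existence of pulsating waves between consecutive $q_j$'s, and constructing such a wave for each pair.

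First, by Theorem~\ref{th:exists}, producing a decomposition between $0$ and $p$ is enough to obtain the minimal terrace. A decomposition is precisely a sequence of $L$-periodic stationary solutions connected pairwise by pulsating traveling waves, and the hypothesis already supplies the candidate platforms $p = q_0 > q_1 > \cdots > q_m = 0$. Hence it suffices to build, for each $j \in \{1,\ldots,m\}$, a pulsating traveling wave of \eqref{eqn1} connecting $q_j$ to $q_{j-1}$. Fix such a $j$ and abbreviate $\underline{q} := q_j$, $\overline{q} := q_{j-1}$.

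To construct the wave, I would solve the Cauchy problem for \eqref{eqn1} starting from a smooth, non-increasing, step-like initial datum $u_0$ satisfying $u_0 \equiv \overline{q}$ on $(-\infty, -M]$ and $u_0 \equiv \underline{q}$ on $[M, +\infty)$ for some large $M>0$. The parabolic comparison principle then gives $\underline{q}(x) \leq u(t,x) \leq \overline{q}(x)$ and $\partial_x u(t,\cdot) \leq 0$ for every $t \geq 0$. To extract a wave, I would anchor the front on the lattice $L\mathbb{Z}$: fix $x_0 \in \R$ and a level $\mu \in (\underline{q}(x_0), \overline{q}(x_0))$, pick times $t_n \to +\infty$ and integers $k_n$ so that $u(t_n, x_0 + k_n L) = \mu$, and use standard parabolic estimates to extract a locally uniform subsequential limit $\tilde u(t,x) := \lim_n u(t+t_n, x + k_n L)$. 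By construction, $\tilde u$ is an entire solution of \eqref{eqn1}, still non-increasing in $x$ and still trapped between $\underline{q}$ and $\overline{q}$.

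The main obstacle is upgrading $\tilde u$ from a mere entire transition solution to a genuine pulsating traveling wave. That the spatial limits $\tilde u(t,\pm\infty)$ equal $\overline{q}$ and $\underline{q}$ is immediate from the hypothesis, since those limits are $L$-periodic stationary solutions in $[\underline{q},\overline{q}]$ and by assumption the only such solutions are $\underline{q}$ and $\overline{q}$ themselves. The harder step is to recover the pulsating form $\tilde u(t,x) = U(x - ct, x)$: here the absence of intermediate equilibria is used in full strength, for it combines with the monotonicity in $x$ to prevent the $\omega$-limit of $u$ (under time shifts and lattice translations in $x$) from containing anything other than pulsating wave profiles. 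A careful choice of the anchoring sequence $(t_n,k_n)$ — for instance tracking the asymptotic average speed of the $\mu$-level set of $u$ and picking $k_n$ so that $x_0 + k_n L$ stays within $O(1)$ of that level set — yields a limit invariant under a shift $(t,x) \mapsto (t+T, x+L)$ for some $T$, which is exactly the pulsating wave relation with speed $c = L/T$. The collection $(U_j)_{1\leq j\leq m}$ together with the platforms $(q_j)_{0\leq j\leq m}$ then forms the sought decomposition, and the existence of a minimal terrace follows from Theorem~\ref{th:exists}.
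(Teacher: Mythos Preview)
Your reduction to the pairwise problem is correct and matches the paper. The paper, however, disposes of each pair $q_{j-1}>q_j$ in two lines by quoting two existing results: since there is no periodic stationary solution strictly between them, \cite[Cor.~4.5]{Matano79} forces the periodic dynamics between $q_{j-1}$ and $q_j$ to be monostable (every solution of \eqref{eqn1-per} with initial data in $(q_j,q_{j-1})$ converges to the same endpoint), and then Weinberger's spreading theory~\cite{Wein02} furnishes a pulsating traveling wave. That is the whole proof.

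Your direct Cauchy--problem construction is in effect an attempt to reprove the Weinberger step, and as written it has a genuine gap at exactly the point you flag as ``the harder step''. Extracting an entire solution $\tilde u$ with the correct spatial limits is fine (and indeed the hypothesis kills any intermediate periodic equilibria), but nothing in your argument shows that $\tilde u$ satisfies a relation $\tilde u(t+T,x+L)=\tilde u(t,x)$. You assume the $\mu$-level set has an asymptotic average speed and that a suitable anchoring produces the shift invariance; neither is established. The existence of a spreading speed is precisely the content of the monostable theory you are bypassing, and without the monostability conclusion of~\cite{Matano79} there is no a~priori reason the level set should move coherently at all. Your sketch also does not handle the possibility $c=0$, where no finite $T$ exists and one needs a stationary wave instead.

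There is also a smaller technical slip: in the spatially periodic setting $\overline q$ and $\underline q$ are nonconstant $L$-periodic functions, so an initial datum equal to $\overline q$ on a left half-line cannot be nonincreasing, and the claim $\partial_x u(t,\cdot)\le 0$ fails. The right substitute in this paper's framework is steepness in the sense of Definition~\ref{def:steep} (take a Heaviside-type datum and use Lemma~\ref{lem:steep_arg2}), not pointwise monotonicity in~$x$.
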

\begin{proof}
Since there exists no other stationary solution of \eqref{eqn1-per} between $q_{j-1}$ and $q_j$, we see from \cite[Corollary~4.5]{Matano79} that either all solutions whose initial data lie between $q_{j-1}$ and $q_j$ converge to $q_{j-1}$ as $t \to +\infty$ or they converge to $q_j$ as $t \to +\infty$. Corollary~4.5 of~\cite{Matano79} is for problems under the Dirichlet, Neumann or Robin boundary conditions but precisely the same argument carries over to the periodic boundary conditions. Thus by the result of Weinberger~\cite{Wein02}, there exists a pulsating traveling wave solution of~\eqref{eqn1} connecting $q_j$ to $q_{j-1}$ or vice versa, $j=1,...,m$. These traveling waves constitute a decomposition between $0$ and $p$. Hence a minimal terrace exists by Theorem~\ref{th:exists}.
\end{proof}\\

In the spatially homogeneous case where $a$ and $f$ are independent of $x$, the assumption in Proposition~\ref{proposition} can be restated that $f(u)$ has finitely many zeroes between 0 and $p$. We remark that this finiteness assumption is by no means necessary for the existence of a decomposition. A classical example is a combustion nonlinearity, that is, $f(u) = 0$ ($0 \leq u \leq \theta$, $u =1$) and $f (u) >0$ ($\theta < u < 1$), for which a traveling wave connecting 1 to 0 is known to exist~\cite{AW75}. A more general sufficient condition for the existence of a decomposition in the spatially homogeneous case is found in \cite[Theorem~1.2]{Polacik2}.

In the spatially periodic case, our earlier paper~\cite[Theorem~1.10]{DGM} gives another sufficient condition, namely the existence of a compactly supported initial datum from which the solution of \eqref{eqn1} converges to $p$ from below as $t \to +\infty$; this condition admits, for example, a spatially periodic combustion type nonlinearity, which does not satisfy the assumption of Proposition~\ref{proposition}. Nonetheless Proposition~\ref{proposition} covers many important examples and its assumption is simple and relatively easy to verify.

The existence of a pulsating traveling wave for a spatially periodic bistable nonlinearity is a particular case to which Proposition~\ref{proposition} applies. This example will be treated in Section~\ref{sec:exemples} along with other examples.\\

The main idea in the proof of Theorem~\ref{th:exists} is first to show that the solution starting from a Heaviside function like initial datum becomes less and less steep as time passes, while remaining steeper than any entire solution that lies between 0 and~$p$, including traveling waves. This idea is basically along the same lines as in our previous paper~\cite{DGM} and is somewhat inspired by the work of Kolmogorov, Petrovsky and Piskunov~\cite{KPP}. By slightly modifying the argument, we obtain the following theorem on the long time behavior of more general solutions:
\begin{theorem}[Convergence to a terrace]\label{th:CV}
Assume that there exists a minimal terrace $\mathcal{T} = ((p_k)_k,(U_k)_k)$ such that $c_k \neq 0$ for any $k$, and let~$u_0$ be a piecewise continuous function such that
$$u_0 \mbox{ is steeper than any $p_k$ and $U_k$},$$
$$u(x)-p(x) \rightarrow 0 \mbox{ as } x\rightarrow -\infty \mbox{, and } u(x) \rightarrow 0 \mbox{ as } x \rightarrow +\infty.$$
Then the associated solution $u$ of the Cauchy problem for \eqref{eqn1} converges to~$\mathcal{T}$ in the following sense:\begin{enumerate}[$(i)$]
\item There exist functions $(m_k (t))_{1 \leq k \leq N}$ with
$m_k (t) = o(t)$ as $t \rightarrow +\infty$ such that
\begin{equation}\label{eqn:CVcor1}
\begin{split}
u(t,x +c_k (t-m_k (t)) )- U_k (t-m_k (t),x +
c_k (t-m_k (t))) \quad \\
\to 0 \ \ \ \hbox{as} \ \ t\to +\infty,
\end{split}
\end{equation}
locally uniformly on $\R$.
\item For any $1 \leq k \leq N-1$, we have that
$$ c_{k+1} ( t - m_{k+1} (t)) - c_k (t - m_k (t)) \to +\infty, \ \mbox{ as $ t \to +\infty$}.$$
Moreover, for any $\delta >0$, there exist $C>0$ and $T>0$ such that, for any $t\geq T$ and
$1 \leq k \leq N-1$:
$$
\| u(t,\cdot  )- p_k (\cdot) \|_{L^\infty ([c_k
(t-m_k (t))  + C , c_{k+1} (t-m_{k+1} (t)) - C])} \leq \delta,
$$
together with
$$
\| u(t,\cdot +c_1 (t-m_1 (t)) )- p(\cdot) \|_{L^\infty
((-\infty,-C])} \leq \delta,
$$
$$
\| u(t,\cdot +c_N (t-m_N (t))  ) \|_{L^\infty ([C,
+\infty))} \leq \delta.$$
\end{enumerate}
\end{theorem}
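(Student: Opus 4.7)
The plan is to extend the steepness argument from~\cite{DGM}, which handled Heaviside-type initial data, to the more general class of steep initial data allowed here. The two workhorses will be the parabolic intersection-counting (zero-number / Sturm-type) lemma and the characterization of minimal terraces as steepest entire solutions provided by part~$(3)$ of Theorem~\ref{th:exists}.

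First, I would show that $u(t,\cdot)$ remains steeper than every $p_k$ and every time-shift of every $U_k$ for all $t>0$. For any entire solution $v$ of~\eqref{eqn1}, the difference $w=u-v$ satisfies a linear parabolic equation with bounded coefficients, so the number of sign changes of $w(t,\cdot)$ in $x$ is non-increasing in~$t$; combined with the prescribed limits of $u_0$ at $\pm\infty$ and the initial steepness of $u_0$, this propagates the steepness property to all positive times.

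Next, for each $k$, fix a level $\lambda_k$ lying strictly between $\max_x p_k(x)$ and $\min_x p_{k-1}(x)$ and let $\xi_k(t)$ be a point at which $u(t,\cdot)=\lambda_k$; by the preserved steepness and the separation of platforms, this point is unique up to bounded error. Along any sequence $t_n\to +\infty$, parabolic regularity lets me extract a subsequence along which the translated solutions $u(\cdot + t_n,\cdot + \xi_k(t_n))$ converge in $C^{1,2}_{\mathrm{loc}}$ to an entire solution $u_\infty$ of~\eqref{eqn1}, valued in $[p_k,p_{k-1}]$ and inheriting the steepness property. Applying Theorem~\ref{th:exists} to the pair $(p_k,p_{k-1})$ and using the hypothesis $c_k\neq 0$ to rule out stationary limits, $u_\infty$ must coincide with the minimal wave $U_k$ up to a single time shift. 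This both identifies the shift function $m_k(t)$ so that \eqref{eqn:CVcor1} holds, and forces $\xi_k(t)/t\to c_k$, yielding $m_k(t)=o(t)$. Hence $(i)$.

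For $(ii)$, suppose toward a contradiction that $c_{k+1}(t-m_{k+1}(t))-c_k(t-m_k(t))$ failed to diverge along some subsequence. Extracting subsequential limits as above, centered at a point between the two fronts, would yield an entire solution valued in $[p_{k+1},p_{k-1}]$ and steeper than every intermediate stationary solution (in particular steeper than $p_k$), whose level sets at $\lambda_k$ and $\lambda_{k+1}$ stay at bounded distance. Standard rigidity of such steep entire solutions would make it a single traveling wave connecting $p_{k+1}$ to $p_{k-1}$, contradicting the fact that $p_k$ must be a platform of every decomposition, by Corollary~\ref{th:exists_cor}. Once the separation is established, $u(t,\cdot)$ is sandwiched between time-shifts of $U_k$ and $U_{k+1}$, each of which approaches $p_k$ uniformly far from its own front, giving the desired $L^\infty$ estimates on the intermediate regions as well as the endpoint estimates near $\pm\infty$.

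The main obstacle is the identification step in the third paragraph: passing from a merely steep subsequential limit to its exact coincidence with $U_k$ up to a single time shift. Steepness alone furnishes only that the limit is some steepest entire solution between $p_k$ and $p_{k-1}$; one must invoke the minimal terrace uniqueness (Theorem~\ref{th:unique_min}) together with the restriction $c_k\neq 0$ to eliminate stationary-wave indeterminacies and obtain a single-valued shift $m_k(t)$. A secondary difficulty is the upgrade from the immediate $m_k(t)=O(t)$ (from boundedness of the relevant speeds) to $m_k(t)=o(t)$, which is not direct from comparison but falls out of the full subsequential convergence argument only after the identification step has been secured.
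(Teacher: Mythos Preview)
Your overall strategy matches the paper's, but three steps would fail as written, all tied to the periodic setting. First, since \eqref{eqn1} is only $L$-periodic, translating by an arbitrary $\xi_k(t_n)$ does not yield a solution of \eqref{eqn1}; the limit $u_\infty$ would solve a spatially shifted equation. The paper fixes a level $\alpha_k\in(p_k(0),p_{k-1}(0))$ at a single point, defines first-passage times $\tau_j(\alpha_k)$ at lattice points $x=jL$, and shifts only by $jL$. This also repairs your choice of $\lambda_k$: nothing prevents the ranges of $p_k$ and $p_{k-1}$ from overlapping, so a level strictly between $\max_x p_k$ and $\min_x p_{k-1}$ need not exist. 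Second, the claim that $u_\infty$ is ``valued in $[p_k,p_{k-1}]$'' is not justified by steepness (steepness allows $u_\infty$ to cross $p_k$ once). The paper bypasses this: it first upgrades ``$u_0$ steeper than every $p_k,U_k$'' to ``$u_0$ steeper than every entire solution in $[0,p]$'' (using that the $p_k,U_k$ themselves are steepest), so that any $\omega$-limit and $U_k$ are \emph{mutually} steepest; since $c_k\neq 0$, a time shift makes them intersect at $(0,0)$, forcing equality directly. The bound $m_k(t)=o(t)$ then comes from the concrete relation $\tau_j(\alpha_k)/j\to L/c_k$ (formula~\eqref{eqn:speed_ck}), not merely from abstract subsequential identification.

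Your argument for $(ii)$ has a genuine gap. The ``standard rigidity'' forcing a single wave from $p_{k+1}$ to $p_{k-1}$ is not standard, and even granting it, Corollary~\ref{th:exists_cor} does not immediately give a contradiction: a single wave between $p_{k+1}$ and $p_{k-1}$ is itself a decomposition not containing $p_k$, so you would need to know that $p_k$ is a platform of the minimal terrace \emph{between $p_{k+1}$ and $p_{k-1}$}, which without Assumption~\ref{assumption-speed1} or distinct speeds is essentially what you are trying to prove. The paper avoids this entirely by a direct monotonicity argument: from $u_0$ steeper than each $p_k,U_k$ (which are nonincreasing under $x\mapsto x+L$) one gets $u(t,x)\geq u(t,x+L)$ for all $t\geq 0$; combined with the uniform-in-time limits $U_k(t,x+c_k t)\to p_k(x)$ as $x\to+\infty$ and the local convergence from~$(i)$, this discrete monotonicity propagates the estimate $|u-p_k|\leq\delta$ across the whole gap between the $k$-th and $(k{+}1)$-th fronts, with no rigidity or contradiction step needed.
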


The above theorem extends our previous convergence result in~\cite{DGM} to the case when the target terrace is not necessarily moving to the right. Some speeds~$c_k$ may be negative while others are positive (in which case the terrace splits into two opposite directions), or it may also happen that all speeds~$c_k$ are negative (in which case the terrace ``retreats" entirely). It is also possible that some speeds may be equal, in which case the limiting traveling waves still drift away from each other, as implied by the first part of statement $(ii)$. Note, however, that the theorem needs to avoid the possibility of zero speeds, the reason for which we will explain briefly below. The above theorem also relaxes the assumptions on the initial data compared with~\cite{DGM}, by not requiring $u_0$ to be of the Heaviside function type, but only assuming it to be steep enough.

This shows, as announced, the importance of the notion of terraces, in particular minimal ones, which succeed in capturing the dynamics of solutions of the Cauchy problem. It is not known, though, whether there also exist solutions converging in large time to a non minimal terrace. Although they do not directly answer this question, our next three theorems will yield some insight, by both providing new characterizations of minimal terraces, and showing that non minimal terraces may simply not exist.

Note that the theorem above, as well as those below, avoid the possibility of zero speeds occurring in terraces. Some study could still be conducted in such a situation, but reveals itself to be much more complicated and to lead to an even wider range of dynamics. The main reason is that stationary traveling waves do not provide a complete information, as they may not exist around any level set and any point in space, due to the space heterogeneity. Therefore, one must involve some other entire solutions, that cannot be parts of a propagating terrace as we defined it. We will discuss this in Section~\ref{sec:zero} and chose to focus, as far as our main results are concerned, on the non zero speeds case.\\

Let us conclude this section by turning to the question of uniqueness of minimal, semi-minimal or propagating terraces. This uniqueness will be meant here up to time shifts: that is, given two terraces $((p_k)_k,(U_k)_k)$ and $((p'_k)_k,(U'_k)_k)$, we say that they are equal up to time shifts if they share the same platforms and if, for any $k$, $U'_k$ is a time shift of $U_k$ which may depend on $k$. As stationary waves are obviously not unique up to time shifts, this clearly suggests that, as in our previous theorem, we will only consider terraces moving with non zero speeds, for instance those that we have already constructed in~\cite{DGM}.

Our three uniqueness theorems are the following, each one dealing with a different type of propagating terraces: 
\begin{theorem}[Uniqueness I: Minimal terraces]\label{th:unique_min}
If there exists a minimal propagating terrace $\mathcal{T}= \left( (p_k)_{0 \leq k \leq N},(U_k)_{1 \leq k \leq N} \right)$ with non zero speeds,
then it is the unique minimal propagating terrace up to time shifts.
\end{theorem}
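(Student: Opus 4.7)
The plan is to derive the uniqueness in three successive reductions. First, the two minimal terraces share the same platforms. Second, on each story the two corresponding waves are mutually steeper than each other. Third, mutual steepness combined with the non zero speed hypothesis forces the waves to coincide up to a time shift.

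For the first reduction, given two minimal terraces $\mathcal{T}=((p_k),(U_k))$ and $\mathcal{T}'=((p'_k),(U'_k))$, I would apply the minimality property of $\mathcal{T}$ to the propagating terrace $\mathcal{T}'$ to obtain $\{p_k\}\subset\{p'_k\}$; the symmetric argument gives equality of platforms, so in particular $N=N'$ and $p_k=p'_k$ for every $k$. For the second reduction, semi-minimality of $\mathcal{T}$ forces each $U_k$ to be steeper than any other pulsating traveling wave connecting $p_k$ to $p_{k-1}$, and in particular steeper than $U'_k$; the symmetric argument then yields that the pair $(U_k,U'_k)$ is mutually steeper.

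The third and main step is where the assumption on the speeds really comes into play. Dropping subscripts, let $U, U'$ be the two waves, with speeds $c\ne 0$ and $c'$, and fix $x_0\in\R$. Setting $v:=U'(0,x_0)$, the strong maximum principle applied between the stationary solutions $p_k$ and $p_{k-1}$ gives $v\in(p_k(x_0),p_{k-1}(x_0))$. Since $c\ne 0$, the continuous function $t\mapsto U(t,x_0)$ has $p_k(x_0)$ and $p_{k-1}(x_0)$ as its two limits at $\pm\infty$, so by the intermediate value theorem the value $v$ is attained at some $t_1\in\R$. Invoking both steepness inequalities at the triple $(t_1,0,x_0)$ then yields, on either side of $x_0$, both $U(t_1,\cdot)\ge U'(0,\cdot)$ and $U(t_1,\cdot)\le U'(0,\cdot)$, hence $U(t_1,\cdot)\equiv U'(0,\cdot)$ on $\R$. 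Along the way one also checks $c'\ne 0$: otherwise $U'$ would be stationary and forward parabolic uniqueness would propagate stationarity to $U$, contradicting $c\ne 0$.

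Finally, I would propagate the identity $U(t_1,\cdot)\equiv U'(0,\cdot)$ to every time. Forward uniqueness for the Cauchy problem associated with \eqref{eqn1} gives $U(t+t_1,\cdot)\equiv U'(t,\cdot)$ for all $t\ge 0$, and backward parabolic uniqueness, or equivalently the fact that a pulsating wave of non zero speed is determined by its values on any time interval of length $L/|c|$, extends this to every $t\in\R$. Doing this for every $k$ yields $\mathcal{T}=\mathcal{T}'$ up to time shifts. I expect the hardest step to be the third one, namely the construction of a triple at which both steepness inequalities can be used simultaneously; this is exactly where the non zero speed assumption is consumed, since otherwise the range of $U(\cdot,x_0)$ could collapse to a single value. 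Once the two profiles have been made to coincide at a single time, the conclusion is a standard combination of parabolic uniqueness and the pulsating wave structure.
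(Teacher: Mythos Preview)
Your proposal is correct and follows essentially the same approach as the paper. The paper's own proof is extremely terse (``all minimal terraces share the same platforms, and one can easily check that, between each consecutive platforms, the steepest traveling wave is unique up to time shift, as long as its speed is not zero''), and what you have done is spell out precisely the two ingredients the paper takes for granted: the intermediate-value argument that uses $c_k\neq 0$ to produce an intersection point, and the mutual-steepness sandwich that forces the profiles to coincide at that time. Your propagation of the identity to all times via the pulsating relation $U(t+L/c,\cdot)=U(t,\cdot-L)$ is the cleaner of the two options you mention; invoking backward parabolic uniqueness directly is valid but heavier than needed here.
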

\begin{proof} Theorem~\ref{th:unique_min} is actually almost trivial. All minimal terraces indeed share the same platforms, and one can easily check that, between each consecutive platforms, the steepest traveling wave is unique up to time shift, as long as its speed is not zero.
\end{proof}
\begin{theorem}[Uniqueness II: Semi-minimal terraces]\label{th:unique_semi}
If there exists a semi-minimal propagating terrace $\mathcal{T} = \left( (p_k)_{0 \leq k \leq N},(U_k)_{1 \leq k \leq N} \right)$ with non zero speeds, and if either of the following two statements is satisfied:
\begin{enumerate}[$(a)$]
\item the speeds of the $U_k$ are all different (i.e. $c_1 < c_2 < ... < c_N$); 
\item Assumption~\ref{assumption-speed1} is true;
\end{enumerate}
then it is the unique minimal terrace up to time shifts.

Under the Assumption~\ref{assumption-speed1}, it is also the unique semi-minimal terrace.
\end{theorem}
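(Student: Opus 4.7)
The plan is to show that $\mathcal{T}$ coincides with the minimal terrace itself. A minimal terrace $\mathcal{T}^{*}=((p^{*}_k),(U^{*}_k))$ exists by Theorem~\ref{th:exists} applied to the decomposition underlying $\mathcal{T}$, and Corollary~\ref{th:exists_cor} gives $\{p^{*}_k\}\subseteq\{p_k\}$. The crux is to establish the converse inclusion $\{p_k\}\subseteq\{p^{*}_k\}$. Once the two platform sets agree, $U_k$ and $U^{*}_k$ are both steepest waves between the same consecutive platforms, and the uniqueness-up-to-time-shift of steepest waves with nonzero speed (as used in the proof of Theorem~\ref{th:unique_min}) identifies them, so that $\mathcal{T}=\mathcal{T}^{*}$ up to time shifts. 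In particular $\mathcal{T}$ is minimal, and Theorem~\ref{th:unique_min} then delivers the claimed uniqueness among minimal terraces.

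To prove $\{p_k\}\subseteq\{p^{*}_k\}$, I argue by contradiction. Suppose some platform $p_{k_{0}}$ of $\mathcal{T}$ lies strictly between two consecutive platforms $p^{*}_j<p^{*}_{j-1}$ of $\mathcal{T}^{*}$. Then the range $[p^{*}_j,p^{*}_{j-1}]$ contains at least two consecutive waves of $\mathcal{T}$, say $U_{a+1},\ldots,U_{a+r}$ with $r\geq 2$ and ordered speeds $c_{a+1}\leq\cdots\leq c_{a+r}$, whereas $\mathcal{T}^{*}$ has only the single wave $U^{*}_j$ of speed $c^{*}_j$ on this range. Property~(3) of Theorem~\ref{th:exists} ensures that $U^{*}_j$ is steeper than every entire solution between $0$ and $p$, and hence steeper than each $U_{a+i}$; conversely, the semi-minimality of $\mathcal{T}$ makes each $U_{a+i}$ the steepest wave between its own endpoints.

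The contradiction will come from a speed-matching argument at the common asymptotic levels. Near the top, $U^{*}_j$ and $U_{a+1}$ both converge to $p^{*}_{j-1}$ as $x\to-\infty$; the steepness of $U^{*}_j$ over $U_{a+1}$, together with Theorem~\ref{th:lem_speeds} (the speed-steepness correspondence of Section~\ref{sec:minspeed}) applied to these two waves, forces $c^{*}_j=c_{a+1}$ — intuitively, a steeper wave cannot have a smaller exponential decay rate at the common limit, and at nonzero speed this pins the two speeds together. The symmetric argument near the bottom, where $U^{*}_j$ and $U_{a+r}$ both tend to $p^{*}_j$ as $x\to+\infty$, yields $c^{*}_j=c_{a+r}$. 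Under hypothesis~$(a)$, the strict ordering $c_1<\cdots<c_N$ makes $c_{a+1}=c_{a+r}$ impossible, already giving the contradiction. Under hypothesis~$(b)$, Assumption~\ref{assumption-speed1} supplies the principal eigenvalue and exponential barriers needed to carry out the above speed-matching step rigorously through Theorem~\ref{th:lem_speeds}; the resulting equality $c_{a+1}=\cdots=c_{a+r}=c^{*}_j$ together with the steepness of $U^{*}_j$ over the intermediate $U_{a+i}$ at level $p_{k_{0}}$ — where two adjacent waves of $\mathcal{T}$ meet in the multi-wave profile — contradicts the uniqueness-up-to-shift of a steepest wave with that common nonzero speed.

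The final statement is immediate: any other semi-minimal terrace $\mathcal{T}'$ with nonzero speeds satisfies hypothesis~$(b)$, hence equals the unique minimal terrace by the first part, and therefore coincides with $\mathcal{T}$ up to time shifts. The principal obstacle is the rigorous execution of the speed-matching step — namely, the precise invocation of Theorem~\ref{th:lem_speeds} to convert a steepness ordering at a common asymptotic state into an equality of speeds, and the control of exponential decay rates that this requires (which is precisely where Assumption~\ref{assumption-speed1} enters in case~$(b)$); the rest of the argument is essentially bookkeeping relying on Theorems~\ref{th:exists} and~\ref{th:unique_min}.
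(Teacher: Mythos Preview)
Your overall architecture matches the paper's: reduce to showing $\{p_k\}=\{p^{*}_k\}$, derive a speed sandwich $c_{a+r}\leq c^{*}_j\leq c_{a+1}$, contradict either strict ordering~$(a)$ or, under~$(b)$, force $U^{*}_j$ to coincide with one of the $U_{a+i}$ despite having different asymptotics. However, there are two genuine gaps.

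\textbf{Case $(a)$ cannot use Theorem~\ref{th:lem_speeds}.} You invoke Theorem~\ref{th:lem_speeds} to pass from ``$U^{*}_j$ is steeper than $U_{a+1}$'' to a speed inequality, but that theorem carries a standing hypothesis: the existence of $\delta>0$ and a periodic $g$ with $\mu_g\leq 0$ bounding $\partial_u f$ near the common upper state. This is exactly what Assumption~\ref{assumption-speed1} provides (once one knows the state is stable from below, via Claim~\ref{claim:42}), and it is \emph{not} available under hypothesis~$(a)$ alone. The paper obtains the needed inequalities differently, without any stability input: it places $U'_i(0,\cdot)$ strictly below a Heaviside-type datum $r=p_i+H(a-x)(p_{i-1}-p_i)$ and applies Theorem~\ref{th:CV} between $p_i$ and $p_{i-1}$ (where the semi-minimal $U_i$ is the unique minimal terrace) to see that the solution from $r$ spreads with speed $c_i$; the comparison principle then gives $c'_i\leq c_i$. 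The symmetric comparison below $p'_i$ gives $c_j\leq c'_i$, hence $c_j\leq c_i$, contradicting~$(a)$. Your route via Theorem~\ref{th:lem_speeds} simply does not start in case~$(a)$. (Incidentally, even when Theorem~\ref{th:lem_speeds} does apply, it yields only $c^{*}_j\leq c_{a+1}$, not equality; the equality comes after combining with the symmetric inequality at the bottom and the terrace ordering $c_{a+1}\leq c_{a+r}$.)

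\textbf{The final statement needs to rule out zero speeds.} You write that any other semi-minimal terrace $\mathcal{T}'$ ``with nonzero speeds'' falls under the first part. But nothing in the hypotheses prevents a competing semi-minimal terrace $\mathcal{T}''$ from having some $c''_i=0$, and you must exclude this. The paper does so by a separate Heaviside comparison: sandwiching the three solutions starting from $p''_{i_1+1}+H(1-x)(\cdots)$, $p_j+H(-x)(\cdots)$, and $p''_{i_2}+H(-1-x)(\cdots)$ and applying Theorem~\ref{th:CV} yields $c''_{i_2}\leq c_j\leq c''_{i_1+1}$, which together with $c''_{i_1+1}\leq 0\leq c''_{i_2}$ forces $c_j=0$, contradicting the assumption on $\mathcal{T}$. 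This step is missing from your argument.
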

\begin{theorem}[Uniqueness III: Propagating terraces]\label{th:unique}
Under the Assumption~\ref{assumption-speed2}, if there exists a propagating terrace~$\mathcal{T}$ with non zero speeds, then it is the unique (hence minimal) propagating terrace up to time shifts.
\end{theorem}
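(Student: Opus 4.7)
The strategy is to reduce everything to the minimal terrace: under Assumption~\ref{assumption-speed2}, I will show that any propagating terrace $\mathcal{T}=((p_k)_{0\le k\le N},(U_k)_{1\le k\le N})$ with non-zero speeds must coincide with the minimal terrace $\mathcal{T}^*=((p_j^*)_{0\le j\le M},(U_j^*)_{1\le j\le M})$ up to time shifts. Existence and the key steepness property of $\mathcal{T}^*$ come from Theorem~\ref{th:exists} and Corollary~\ref{th:exists_cor}, which already provide $\{p_j^*\}\subset\{p_k\}$; it then remains to establish the reverse inclusion of platforms and the coincidence of the corresponding waves.

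The first and decisive step is a ``no extra platform'' argument. Suppose toward a contradiction that some $p_{k_0}\in\{p_k\}\setminus\{p_j^*\}$ is sandwiched between consecutive minimal platforms $p_{j+1}^*<p_{k_0}<p_j^*$. Since $p_{k_0}$ arises as an internal platform of $\mathcal{T}$, it is the $-\infty$-limit of $U_{k_0}$ approached from above and the $+\infty$-limit of $U_{k_0+1}$ approached from below, both with non-zero speeds. A careful asymptotic analysis of these wave profiles, via linearization and the strong maximum principle, shows that $p_{k_0}$ must be stable from both sides with respect to~\eqref{eqn1-per}. Assumption~\ref{assumption-speed2} then supplies a periodic function $g$ with $\mu_g\le 0$ bounding $\partial_u f(x,\cdot)$ on a two-sided neighborhood of $p_{k_0}$. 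On the other hand, the minimal wave $U_{j+1}^*$ has non-zero speed and its monotone profile necessarily takes the $p_{k_0}$-level somewhere. Invoking the speed-steepness correspondence of Section~\ref{sec:minspeed} (Theorem~\ref{th:lem_speeds}), such a crossing is incompatible with $\mu_g\le 0$ at an intermediate stable stationary solution. Hence no extra platform can exist and $\{p_k\}=\{p_j^*\}$.

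With identical platforms in hand, each $U_k$ and $U_k^*$ connect the same pair of consecutive platforms with non-zero speeds. Assumption~\ref{assumption-speed2} provides two-sided eigenvalue inequalities at both endpoints, so Theorem~\ref{th:lem_speeds} forces any two pulsating waves between them to share the same speed and to be equal up to a time shift. Therefore $U_k=U_k^*$ up to time shift for every $k$, and $\mathcal{T}=\mathcal{T}^*$ up to time shifts. Uniqueness of the minimal terrace (Theorem~\ref{th:unique_min}) then yields the ``hence minimal'' clause of the statement.

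The main obstacle is the sharp non-crossing claim used in the second paragraph: under Assumption~\ref{assumption-speed2}, a pulsating wave with non-zero speed cannot cross the level of an intermediate periodic stationary solution at which the eigenvalue inequality $\mu_g\le 0$ holds on a two-sided neighborhood. In the spatially homogeneous case this reduces to the classical bistable obstruction, but in the periodic framework one loses the phase-plane picture and must instead rely on a precise description of the wave profile near the stable level, involving the corresponding principal eigenfunction and the associated exponential rates. This is exactly the content encoded in Theorem~\ref{th:lem_speeds}, and translating it into the concrete non-crossing dichotomy required here will be the heart of the technical work.
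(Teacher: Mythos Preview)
Your proposal contains a genuine gap at the ``non-crossing claim'': the assertion that a pulsating wave with non-zero speed cannot cross the level of an intermediate periodic stationary solution $p_{k_0}$ satisfying $\mu_g\le 0$ on a two-sided neighborhood is \emph{false} in general, and it is not what Theorem~\ref{th:lem_speeds} encodes. Theorem~\ref{th:lem_speeds} compares two waves sharing a common upper (or, by symmetry, lower) endpoint; it says nothing about a single wave passing through an intermediate stable level. Concrete counterexamples appear in Sections~\ref{sec:exemples} (tristable and quadristable cases): when the upper bistable speed exceeds the lower one, the minimal terrace consists of a \emph{single} wave that does cross the intermediate linearly stable state~$p_1$. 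So the obstruction you are hoping to extract cannot exist, and the ``translation'' you identify as the heart of the work is not achievable. You also assume without justification that the minimal wave $U^*_{j+1}$ has non-zero speed; in the paper this is a \emph{conclusion}, not an input.

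The paper's argument uses Theorem~\ref{th:lem_speeds} in a different, two-sided way that does not rely on any non-crossing property. Since each minimal platform $p^*_{j}$ lies in $\{p_k\}$, Assumption~\ref{assumption-speed2} applies at both endpoints $p^*_{j-1}$ and $p^*_j$. Because $U^*_j$ is steeper than every entire solution, Theorem~\ref{th:lem_speeds} yields that its speed $c^*_j$ is minimal among all waves with upper endpoint $p^*_{j-1}$ and, symmetrically, maximal among all waves with lower endpoint $p^*_j$. Picking the indices $i\le \ell$ with $p_{i-1}=p^*_{j-1}$ and $p_\ell=p^*_j$, one gets $c_\ell\le c^*_j\le c_i$; combined with the terrace ordering $c_i\le c_\ell$ this forces $c^*_j=c_i=c_\ell\neq 0$, and the equality case of Theorem~\ref{th:lem_speeds} then gives $U_i\equiv U^*_j\equiv U_\ell$ up to time shifts, hence $i=\ell$ and no intermediate platform survives. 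The mechanism is a speed sandwich at the \emph{endpoints} of each minimal step, not an obstruction at an interior level; you should reorganize your argument along these lines.
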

%

Proving Theorems~\ref{th:unique_semi} and~\ref{th:unique} is slightly more intricate than for Theorem~\ref{th:unique_min}, and relies heavily on the link between steepness and moving speed. This link will be the subject of a result of independent interest, namely Theorem~\ref{th:lem_speeds}. This theorem will roughly state that, if $q$ is stable from below in a sense following from Assumptions~\ref{assumption-speed1} or~\ref{assumption-speed2}, then the steepest traveling wave connecting some other equilibrium to~$q$ is also the unique slowest. This is classical for non degenerate equilibria (by looking at the asymptotic behavior of waves as in~\cite{Hamel08}), but we prove it here with another method in order to deal with more general periodic nonlinearities.

\begin{remark}
In the homogeneous case, this relation between steepness and speed is always known to hold, so that Assumptions~\ref{assumption-speed1} and~\ref{assumption-speed2} are actually not needed~\cite{FifMcL77}. We believe that those hypotheses can also be relaxed in the periodic framework.
\end{remark}

On one hand, Theorem~\ref{th:unique_semi} deals with a somehow general case, where the restriction of the reaction nonlinearity between consecutive platforms may be of various types, including monostable ones. In such a situation, terraces cannot be expected to be unique (the speed is not even unique for standard monostable nonlinearities), but our result still provide a slightly easier to check characterization of the minimal terrace. On the other hand, under the assumptions of Theorem~\ref{th:unique}, all platforms of any terrace have to be strongly stable from both below and above, hence terraces ``skip" unstable platforms in some sense. One should look at it as a generalization of the well-known uniqueness of traveling waves up to shift in the standard bistable or ignition cases.

We remind that our underlying goal is to predict, in some sense, the shape of terraces, in particular minimal terraces as they seem to play the most important role. The following proposition shows how this may follow from our main results.
\begin{proposition}\label{proposition2} 
Let $p >q>r$ be $L$-periodic stationary solutions of \eqref{eqn1}, such that $q$ and $p$, $r$ and $q$ are connected in the sense of Definition~\ref{def:decomp}. Denote by $U$ a steepest traveling wave connecting $q$ to $p$, and $V$ a steepest traveling wave connecting $r$ to $q$, with respective speeds $c \neq 0 $ and $c' \neq 0$. 

Then:
\begin{itemize}
\item if $c < c'$, then $(U,V)$ is a minimal terrace,
\item if $c =c'$ and Assumption~\ref{assumption-speed1} holds, then $(U,V)$ is a minimal terrace.
\item if $c >c'$, then $p$ and $r$ are connected in the sense of Definition~\ref{def:decomp}.
\end{itemize}
\end{proposition}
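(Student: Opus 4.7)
The plan is to split Proposition~\ref{proposition2} into its three bullets according to the sign of $c-c'$ and to rely on Theorems~\ref{th:exists} and~\ref{th:unique_semi} as the main tools; no new ideas beyond what those theorems already supply should be required.

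For the first two bullets, the key observation is that $\mathcal{T}:=((p,q,r),(U,V))$ is itself a semi-minimal propagating terrace connecting $r$ to $p$ with non-zero speeds. Indeed, the ordering $p>q>r$ is given by hypothesis; the assumption that $U$ and $V$ are steepest waves between consecutive platforms is exactly the semi-minimality condition; and the inequality $c\leq c'$ that holds in both of these cases is precisely the speed ordering required by the definition of a terrace. Theorem~\ref{th:unique_semi} then applies directly: its part~$(a)$ yields the conclusion when $c<c'$, since the two speeds are then distinct, and its part~$(b)$ yields it when $c=c'$ under Assumption~\ref{assumption-speed1}. In either situation $\mathcal{T}$ is the unique minimal terrace up to time shifts, so $(U,V)$ is a minimal terrace.

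For the third bullet, $(p,q,r)$ is still a decomposition between $r$ and $p$ in the sense of Definition~\ref{def:decomp}, because the waves $U$ and $V$ exist. Theorem~\ref{th:exists} therefore produces a minimal propagating terrace $\mathcal{T}^{*}=((p^{*}_{k}),(U^{*}_{k}))$ connecting $r$ to $p$ whose set of platforms is contained in $\{p,q,r\}$. Since the two endpoints $p$ and $r$ must appear among the $p^{*}_{k}$, the only remaining possibilities are $\{p^{*}_{k}\}=\{p,q,r\}$ or $\{p^{*}_{k}\}=\{p,r\}$. In the first case, each $U^{*}_{k}$ would be a steepest wave between consecutive platforms; by the uniqueness of steepest waves up to time shift for non-zero speed (as noted in the proof of Theorem~\ref{th:unique_min}), $U^{*}_{1}$ would coincide with $U$ and $U^{*}_{2}$ with $V$, so the speeds of $\mathcal{T}^{*}$ would be $c$ followed by $c'$. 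The terrace ordering would then demand $c\leq c'$, contradicting the assumption $c>c'$. Hence $\{p^{*}_{k}\}=\{p,r\}$, which is exactly the statement that a single traveling wave connects $r$ to~$p$, i.e.\ that $p$ and $r$ are connected.

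The genuine work of the proof is hidden inside Theorem~\ref{th:unique_semi}, which itself relies on the link between steepness and speed established in Theorem~\ref{th:lem_speeds}; that is the main technical obstacle, but it is settled elsewhere in the paper. Granted those results, the argument above reduces to a matching exercise between the three possible orderings of $c$ and $c'$ on one hand, and the notions of decomposition, semi-minimal terrace and minimal terrace on the other --- together with the observation, specific to the third case, that a three-platform minimal terrace is incompatible with the reversed speed ordering.
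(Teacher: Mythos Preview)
Your proof is correct and follows essentially the same approach as the paper's own argument: the first two bullets are handled by recognizing $((p,q,r),(U,V))$ as a semi-minimal terrace and invoking Theorem~\ref{th:unique_semi}, while the third bullet uses the existence of a minimal terrace (Theorem~\ref{th:exists} / Corollary~\ref{th:exists_cor}) with platforms in $\{p,q,r\}$ and rules out the three-platform case via the speed ordering. The only cosmetic difference is that the paper phrases the last step as applying Theorem~\ref{th:unique_semi} separately on $[q,p]$ and $[r,q]$, whereas you invoke the uniqueness of steepest waves with non-zero speed directly; these are the same fact.
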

Note that, from our Theorem~\ref{th:exists}, if two $L$-periodic stationary solutions $p>q$ are connected (in the sense of Definition~\ref{def:decomp} ), then there exists a steepest traveling wave among all traveling waves connecting $q$ to $p$, and it is unique up to time shift if its speed is not zero.

The first two statements of Proposition~\ref{proposition2} immediately follow from Theorem~\ref{th:unique_semi}. Then note that, by Corollary~\ref{th:exists_cor}, a minimal terrace should either have a single platform, or be made of two traveling waves connecting respectively $q$ and $p$, and $r$ and $q$. In the latter case, using again Theorem~\ref{th:unique_semi} (separately between $p$ and $q$, $q$ and $r$), these two waves should be $U$ and $V$, hence $c \leq c'$. The third statement of Proposition~\ref{proposition2} follows.

\paragraph{Plan of the paper} Our paper is organized as follows. We will begin in Section~\ref{sec:preliminaries} by recalling some properties on the steepness of solutions of \eqref{eqn1}, namely how it is preserved through time. Those properties rely heavily on the so-called intersection number (or zero number) argument. 

Then, we use in Section~\ref{sec:existence} those preliminaries to look at the large time behavior of solutions of \eqref{eqn1} with very steep initial data, such as of the Heaviside type. We will see that it converges to a minimal terrace, proving both our existence and convergence results. Section~\ref{sec:minspeed} will deal with an important result of independent interest, addressing the link between steepness and speed of pulsating traveling waves: namely, when the upper stationary state is stable, the steepest traveling wave is also the slowest. Our uniqueness results will quickly follow as corollaries, as we will show in Section~\ref{sec:uniqueness}.

After we have proved our main results, we will dedicate the two last sections on discussion in order to provide a better understanding of the notion of propagating terraces. First, in Section~\ref{sec:zero}, we will investigate the case of terraces moving with zero speed, that we mostly avoided in our theorems above. Dynamics will reveal themselves to be much more complex, involving new entire solutions that are not traveling waves in the standard sense, and preventing any uniqueness result to hold in general. Lastly, in Section~\ref{sec:exemples}, we will address a few examples ranging from standard cases to some multistable nonlinearities. Our goal will be to show how our results, in spite of their universal scope, can also easily be applied to particular cases.

\section{Steepness argument and fundamental lemma}\label{sec:preliminaries}

The proofs of our main results rely strongly on a zero-number
argument. The application of this argument --- or the
``Sturmian principle" --- to the convergence proof in semilinear
parabolic equations first appeared in \cite{Matano78}, and recently in~\cite{DGM} to prove not only the convergence but also
the existence of the target objects, namely the terrace and
pulsating traveling waves. This theory states that the number of sign changes of a solution of a second-order parabolic equation, or more generally, the number of intersections of two solutions, is nonincreasing in time. When looking at the steepness of solutions, we will only need to consider situations where only one intersection occurs. We refer the reader to \cite{An88,DuMatano10} for a better overview of the general argument and its applications.\\

We recall here two lemmas from the zero-number theory that we will need here. The first one is a corollary of the semi-continuity of the number of intersections with respect to pointwise topology.
\begin{lemma}\label{lem:steep_arg1}
Let $(w_n)_{n \in \mathbb{N}}: \R \to \R$ be a sequence of functions converging
to $w: \R \to \R$ pointwise on $\R$, and $v$ such that $w_n$ is steeper than $v$ for any $n \in \mathbb{N}$.

 Then $w$ is steeper than $v$.
\end{lemma}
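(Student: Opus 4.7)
The plan is to recast the steepness of each $w_n$ against $v$ as a sign-change property of $g_n(x) := w_n(x) - v(t_2, x)$ for each fixed $t_2$, and then to transfer it, via the classical upper semi-continuity of the number of sign changes under pointwise convergence, to the limit $g(x) := w(x) - v(t_2, x)$. I fix $x_1$ and $t_2$ with $w(x_1) = v(t_2, x_1)$, so that $g(x_1) = 0$ and $g_n \to g$ pointwise.

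First I would unpack the hypothesis: the steepness of $w_n$ at time $t_2$ says that whenever $g_n(y) = 0$ one has $g_n \geq 0$ on $(-\infty, y]$ and $g_n \leq 0$ on $[y, +\infty)$, so in particular no $g_n$ can change sign from negative to positive. I would then transfer this to $g$ directly: if there were $x' < x''$ with $g(x') < 0 < g(x'')$, pointwise convergence would give $g_n(x') < 0 < g_n(x'')$ for $n$ large, contradicting the structure of $g_n$. Hence the set $\{g > 0\}$ lies entirely to the left of $\{g < 0\}$.

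To deduce the steepness of $w$ against $v$ at $(x_1, t_2)$, I would argue by cases on the sign of $g_n(x_1)$ along subsequences. Either $g_n(x_1) = 0$ infinitely often, in which case the steepness of $w_n$ at $(x_1, t_2)$ passes directly to $w$ in the limit; or $g_n(x_1) > 0$, in which case $x_1$ lies strictly to the left of the zero set of $g_n$ and the structural property forces $g_n \geq 0$ on $(-\infty, x_1]$; or $g_n(x_1) < 0$, where I would use the continuous dependence of $v$ on $t$ to pick $t_n \to t_2$ with $v(t_n, x_1) = w_n(x_1)$ and apply steepness of $w_n$ at $(x_1, t_n)$. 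In each case, passing to the pointwise limit yields $g \geq 0$ on $(-\infty, x_1]$ and $g \leq 0$ on $[x_1, +\infty)$, as required.

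The main obstacle will be this last case, which hinges on enough regularity of $v$ in $t$ to invert the equation $v(t_n, x_1) = w_n(x_1)$; in the paper's framework this is guaranteed since $v$ is always taken to be an entire solution of the parabolic equation~\eqref{eqn1}, so that standard parabolic theory supplies the required continuity (indeed smoothness) and also rules out the degenerate higher-order tangencies that would otherwise obstruct the passage to the pointwise limit.
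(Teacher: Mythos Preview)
The paper does not prove this lemma; it merely states it as a corollary of the upper semi-continuity of the number of sign changes under pointwise convergence, so there is no argument in the paper to compare against.

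Your second paragraph is correct and is exactly what that semi-continuity gives: the limit $g=w-v(t_2,\cdot)$ cannot change sign from $-$ to $+$. The trouble is that this is strictly weaker than the steepness conclusion, which also constrains the behaviour of $g$ \emph{at} its zeros. Your case analysis does not close the gap. In the case $g_n(x_1)>0$ you only obtain $g_n\geq 0$ on $(-\infty,x_1]$ and say nothing about the right of $x_1$; and in the case $g_n(x_1)<0$ you want $t_n\to t_2$ with $v(t_n,x_1)=w_n(x_1)$, but nothing guarantees that $w_n(x_1)$ lies in the range of $t\mapsto v(t,x_1)$ --- if $v$ is stationary that range is a single point. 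Parabolic regularity gives continuity of $v$ in $t$, not surjectivity onto a neighbourhood of $v(t_2,x_1)$.

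In fact the lemma as literally stated admits counterexamples: take $v\equiv 0$ and $w_n(x)=-\sqrt{x^2+n^{-2}}$, so each $w_n<0$ is vacuously steeper than $v$, yet $w_n\to -|x|$, which equals $0$ at the origin and is strictly negative on both sides. What is robust under pointwise limits is only the sign-change statement you establish in your second paragraph. In the paper's applications this is harmless, because the limit and the comparison function are always entire solutions of~\eqref{eqn1}, so that Lemma~\ref{lem:steep_arg2} and the strong maximum principle exclude such degenerate tangencies a posteriori. Your instinct to lean on the parabolic structure is therefore right, but the correct place to use it is after passing to the limit, not to rescue the case analysis beforehand.
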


The second lemma is a consequence of the fact that, as announced above, the number of intersections is nonincreasing in time, and also that the shape of an intersection remains the same as long as it still exists.
\begin{lemma}\label{lem:steep_arg2}
Let $u_1$ and $u_2$ be solutions of \eqref{eqn1} for nonnegative times, such that $u_1(0,x)$ is a piecewise continuous bounded function
on $\R$, steeper than $u_2(0,x)$ which is bounded and continuous on $\R$.

Then for any $t >0$, the function $x \mapsto u_1 (t,x)$ is steeper than $x \mapsto u_2 (t,x)$ and furthermore, unless $u_1 \equiv u_2$, any zero of $(u_1 - u_2) (t,\cdot)$ is non degenerate.
\end{lemma}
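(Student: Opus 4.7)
My plan is to work with the difference $w(t,x) := u_1(t,x) - u_2(t,x)$, which satisfies the linear parabolic equation
\begin{equation*}
\partial_t w \;=\; \partial_x(a(x)\,\partial_x w) + c(t,x)\,w, \qquad c(t,x) := \int_0^1 \partial_u f\bigl(x,\, u_2 + \theta w\bigr)\,d\theta,
\end{equation*}
with bounded coefficient $c$. Since $u_1, u_2$ are bounded and parabolic smoothing absorbs the piecewise continuity of $u_1(0,\cdot)$, $w$ is smooth on $(0,+\infty) \times \R$, and the proof will rest on Angenent's refined Sturm principle (the tool invoked in the introduction of Section~\ref{sec:preliminaries}): for each $t>0$ the zero set of $w(t,\cdot)$ consists of isolated simple zeros, and the total number of such zeros is nonincreasing in $t$ (unless $w \equiv 0$).

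I would begin by reinterpreting the initial steepness hypothesis as a sign-change constraint on $w(0,\cdot)$: at any $x_1$ with $w(0,x_1)=0$ one must have $w(0,\cdot)\geq 0$ on $(-\infty,x_1]$ and $w(0,\cdot)\leq 0$ on $[x_1,+\infty)$, so $w(0,\cdot)$ admits at most one sign change and that sign change, if present, is of $+/-$ type. Next, if such a sign change is present, I would pick continuity points $x^\pm$ of $u_1(0,\cdot)$ where $w(0,x^-) > 0$ and $w(0,x^+) < 0$, and invoke pointwise convergence $w(t,x^\pm)\to w(0,x^\pm)$ as $t\downarrow 0$ at these continuity points to conclude that $w(t,x^-)>0$ and $w(t,x^+)<0$ for all small $t>0$. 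Combined with Angenent's bound on the number of sign changes, this forces $w(t,\cdot)$ to have exactly one simple zero for such $t$, again of $+/-$ type; monotonicity of the zero count then propagates this pattern (or else leads to its eventual disappearance, in which case the steepness condition is vacuous) to every $t>0$, yielding the claimed steepness, while the non-degeneracy assertion is just the simplicity of zeros from Angenent's principle. The remaining cases $w(0,\cdot)\geq 0$ or $w(0,\cdot)\leq 0$ everywhere are handled by the strong maximum principle applied to the above linear equation, which yields either $w\equiv 0$ or $w(t,\cdot)$ of constant strict sign for all $t>0$.

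The main technical point I anticipate is the transition $t\downarrow 0$: because $w(0,\cdot)$ is only piecewise continuous, it may jump across zero without having a true zero, potentially creating spurious sign changes. The safeguard is to apply the Sturm principle on slabs $[\varepsilon, T]\times \R$ for $\varepsilon>0$ and pass to the limit $\varepsilon\downarrow 0$ using pointwise convergence at continuity points to transfer the sign-change pattern from $t=0$ to the regime $t>0$; this is the standard bookkeeping behind this kind of argument (see~\cite{An88,DuMatano10} as cited above the statement).
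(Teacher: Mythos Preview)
Your proposal is correct and follows essentially the same approach as the paper: reduce steepness to ``at most one sign change of $+/-$ type'' for $w=u_1-u_2$, and use the zero-number (Angenent) argument to propagate this in time and rule out degenerate zeros. One small imprecision: Angenent's principle does not assert that all zeros are simple for every $t>0$; rather, a degenerate zero forces a strict drop in the zero count, so in your setting (count $\leq 1$) a degenerate zero at time $t$ would require count $\geq 2$ just before~$t$, a contradiction --- this is exactly the argument the paper sketches.
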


\begin{remark}
In both lemmas above, we compare steepness of functions which do not depend on time but on~$x \in \R$ only: even in Lemma~\ref{lem:steep_arg2}, where the functions $u_1$ and $u_2$ depend on time, their steepness is only compared at fixed times. This can easily be understood as a particular case of Definition~\ref{def:steep}.
\end{remark}

The non degeneracy of any zero of $u_1 - u_2$ follows from the fact that, whenever a degenerate zero of $u_1 - u_2$ appears for some $t$, the number of sign changes is decreasing in time around $t$. This implies, if $t>0$, that $u_1 -u_2$ changes sign at least twice for anterior times, which contradicts the property that $u_1$ is steeper than $u_2$ for any given time. We refer the reader to \cite{DGM} for a more detailed proof of Lemma~\ref{lem:steep_arg2} (note that the same argument applies to the letter here, even though the function $a$ may not be constant).\\

Before we go on, let us recall a definition of the $\omega$-limit set of a solution~$u$. This is slightly different from the standard one, as we add
arbitrary spatial translations while taking the long time limit.
The reason for adopting this definition is that, since each step
of a terrace (that we want to capture by looking at the asymptotic profile of $u$) moves at a different speed, we will need multi-speed
observations to be able to proceed.

\begin{definition}\label{def:omegalimit}
Let $u(t,x)$ be a bounded solution of Cauchy problem
\eqref{eqn1}. We call $v(t,x)$ an
\textbf{$\boldsymbol{\omega}$-limit orbit} of $u$ if there exist
two sequences $t_j \rightarrow +\infty$ and $k_j \in \mathbb{Z}$
such that
\[
u(t+t_j,x+k_j L) \rightarrow v(t,x)\ \mbox{ as } j \to +\infty
\; \mbox{ locally uniformly on } \R^2.
\]
\end{definition}

\begin{remark}
By parabolic estimates, the above convergence takes place in
$C^2$ in $x$ and $C^1$ in $t$.  Hence
one can easily check that any $\omega$-limit orbit of $u$ is
an entire solution of \eqref{eqn1}. Moreover, if $v(t,x)$ is an
$\omega$-limit orbit of $u$, then so is $v(t+\tau,x+kL)$ for any
$\tau \in \R$ and $k\in \mathbb{Z}$, as well as any limit as $\tau \rightarrow \pm \infty$ or $k \rightarrow \pm \infty$, when it exists.
\end{remark}

Let us now state the fundamental lemma that will be used repeatedly
throughout our paper, and whose proof will easily follow from the properties we recalled above:

\begin{lemma}\label{lem:omega-steep}

Let $u(t,x)$ be any piecewise continuous and bounded solution of Cauchy problem
\eqref{eqn1} with an initial datum $0 \leq u_0 \leq p$. We assume that $u_0$ is steeper than any entire solution of \eqref{eqn1} lying between 0 and $p$.

Then any $\omega$-limit orbit $v$ of $u$ is steeper than any other entire
solution of~\eqref{eqn1} lying between $0$ and~$p$.
\end{lemma}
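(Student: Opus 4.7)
The plan is to unwind the definition of an $\omega$-limit orbit and exploit the $L$-periodicity of \eqref{eqn1} so as to reduce the comparison of $v$ with an arbitrary entire solution $w$ (with $0\le w\le p$) to a comparison of $u$ itself with another such entire solution --- a spatial translate of $w$. Fix times $t_1,t_2\in\R$; by Definition~\ref{def:steep}, in order to conclude it suffices to show that $v(t_1,\cdot)$ is steeper than $w(t_2,\cdot)$ as functions of $x$, since $t_1,t_2$ are arbitrary. By the definition of $\omega$-limit orbit, there exist $t_j\to+\infty$ and $k_j\in\mathbb{Z}$ with $u(t_1+t_j,\cdot+k_jL)\to v(t_1,\cdot)$ locally uniformly, and in particular pointwise. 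By Lemma~\ref{lem:steep_arg1}, it is therefore enough to establish that for each sufficiently large $j$, the function $u(t_1+t_j,\cdot+k_jL)$ is steeper than $w(t_2,\cdot)$.

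To produce this pre-limit steepness, I would introduce the shifted function $w_j(r,x):=w(r,x-k_jL)$. Because $a$, $f$ and $p$ are $L$-periodic, $w_j$ is again an entire solution of~\eqref{eqn1} satisfying $0\le w_j\le p$, and hence is covered by the hypothesis that $u_0$ is steeper than any such entire solution. Using the time-shift freedom built into Definition~\ref{def:steep}, this in particular implies that the function $u_0$ is steeper than the snapshot $w_j(t_2-t_1-t_j,\cdot)$ viewed as a function of $x$ alone.

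Next, I would apply Lemma~\ref{lem:steep_arg2} to the pair $(u_1,u_2):=(u,\,w_j(\cdot+t_2-t_1-t_j,\cdot))$, which are both solutions of~\eqref{eqn1} on $t\ge 0$, with $u_1(0,\cdot)=u_0$ piecewise continuous, bounded, and steeper than the smooth bounded function $u_2(0,\cdot)=w_j(t_2-t_1-t_j,\cdot)$ by the previous step. Evaluating at the positive time $t_1+t_j$ (for $j$ large) yields that $u(t_1+t_j,\cdot)$ is steeper than $w_j(t_2,\cdot)=w(t_2,\cdot-k_jL)$. Translating both functions spatially by $+k_jL$ --- an operation which manifestly preserves the steepness relation of Definition~\ref{def:steep} --- gives $u(t_1+t_j,\cdot+k_jL)$ steeper than $w(t_2,\cdot)$, which is exactly what was needed. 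Letting $j\to+\infty$ and applying Lemma~\ref{lem:steep_arg1} then closes the argument.

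The main conceptual obstacle is ensuring that the spatial shifts $k_jL$ arising in the very definition of an $\omega$-limit orbit do not spoil the steepness hypothesis on $u_0$. The remedy is precisely the $L$-periodicity of the equation, which guarantees that any $L$-periodic translate of an admissible entire solution is again an admissible entire solution. Once this observation is in place, the rest is a direct interplay of the two zero-number lemmas together with the time-shift freedom in Definition~\ref{def:steep}.
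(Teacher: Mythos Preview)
Your proof is correct and follows essentially the same approach as the paper's: compare $u_0$ to an appropriate time-slice of (a translate of) the entire solution $w$, apply Lemma~\ref{lem:steep_arg2} to propagate steepness forward in time, and then pass to the limit via Lemma~\ref{lem:steep_arg1}. If anything, you are more explicit than the paper about how the spatial shifts $k_jL$ are absorbed by periodicity (by introducing $w_j(r,x)=w(r,x-k_jL)$ and translating back), a point the paper's proof elides.
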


\begin{remark}
Note that it is not clear a priori which initial data are steeper than any other entire solution lying between 0 and $p$. However, one can immediately see that such a property is always satisfied with an Heaviside type initial datum, that is a function identically equal to $p$ on the interval $(-\infty,X]$, and identically equal to 0 on $(X,+\infty)$, for some $X \in \R$.
\end{remark}

\begin{proof}
Let the sequences $t_j \rightarrow + \infty$
and $k_j \in \mathbb{Z}$ be such that $u(t+t_j,x+k_jL)\to v (t,x)$
locally uniformly as $j \to + \infty$. 

Let $w$ be any entire solution lying between 0 and $p$, and any $t,t' \in \R$. Consider $j$ large enough so that $t_j > -t'$. By our assumption, we have that $x \mapsto u_0 (x)$ is steeper than $x \mapsto w (-t_j -t' + t, x)$, hence $x \mapsto u(t_j +t',x+k_j L)$ is steeper than $x\mapsto w (t,x)$ using Lemma~\ref{lem:steep_arg2}. Using Lemma~\ref{lem:steep_arg1}, one can then pass to the limit as $j \rightarrow +\infty$ to get that $x\mapsto v(t',x)$ is steeper than $x \mapsto w (t,x)$, where $t$ and $t'$ are arbitrary. In other words, $(t,x) \mapsto v (t,x)$ is steeper than $(t,x) \mapsto w (t,x)$ in the sense of Definition~\ref{def:steep}.
\end{proof}

\section{Existence and convergence to a minimal terrace}\label{sec:existence}

In this section, we will prove our existence Theorem~\ref{th:exists}. As mentioned before, we only need to prove that if there exists a decomposition connecting 0 and~$p$, then there exists a minimal terrace connecting 0 to $p$, with the additional powerful property that each of its parts is steeper than any other entire solution of~\eqref{eqn1}.

The sketch of the proof is to exhibit a minimal terrace as a limiting profile of a solution associated to an Heaviside initial datum, which is the steepest of functions between~0 and~$p$. As shown in the previous section, such extreme steepness should be preserved through time and even at the limit as $t \rightarrow +\infty$. More precisely, by Lemma~\ref{lem:omega-steep}, we know that any $\omega$-limit orbit is steeper than any other entire solution. The fact that there exists a decomposition between 0 and~$p$ will then insure that those limits must be either front-like or spatially periodic stationary solutions.

Once we know that such a minimal terrace exists, one will easily be able to check that the proof can be extended to any steep enough initial datum. Provided that the minimal terrace is unique, we then get a full convergence result toward it, namely Theorem~\ref{th:CV}.

\subsection{Existence of a minimal terrace}

Denote by $(q_k)_{0 \leq k \leq N}$ the decomposition between 0 and $p$, and for each $k$, by~$V_k$ a pulsating traveling wave connecting $q_k$ to $q_{k-1}$.

Let $u$ be the solution of \eqref{eqn1} with initial datum
$$u_0 (x)= H(-x) p(x),$$
where $H$ is the Heaviside function. We already know from Lemma~\ref{lem:omega-steep} that any $\omega$-limit $v$ of $u$ is steeper than any other entire solution lying between $0$ and $p$. In particular, $v$ is steeper than any of its time shift, which easily implies that either $\partial_t v \equiv 0$, $\partial_t v >0$ or $\partial_t v <0$.\\

First, thanks to standard parabolic estimates, take a sequence $t_k \rightarrow +\infty$ such that $u(t+t_k,x)$ converges locally uniformly to some $\omega$-limit $w (t,x)$. One can assume without loss of generality that it is stationary, that is $w(t,x) \equiv w(x)$ does not depend on time. Indeed, if not, one can use the fact that $w$ is monotonically increasing or decreasing with respect to time, so that $w(+\infty,x)$ exists and is also an $\omega$-limit of $u$ in the non moving frame. Because $w$ is steeper than the $q_k$ and $V_k$, it follows that~$w$ is a stationary wave connecting some $q_{i}$ to $q_{j}$ with $i \geq j$ (if $i =j$, then it is trivial and $w \equiv q_{i}$).

Our goal is to construct two terraces, connecting respectively $0$ to $q_{i}$ with nonnegative speeds, and $q_{j}$ to $p$ with nonpositive speeds.\\

Assume that $q_{i} >0$. Let $\alpha_1$ such that $q_{i +1} (0) < \alpha_1 < q_{i} (0)$. One can then define, for any $n \in \mathbb{N}$:
$$\tau_n (\alpha_1) = \min \{ t \geq 0 \; | \ u (t,nL) =\alpha_1 \}.$$
Let now the $\omega$-limit
$$w_\infty (t,x;\alpha_1) = \lim_{n\rightarrow +\infty} u(t+\tau_n (\alpha_1),x+nL).$$
Let us check that this limit is well-defined. Note that the family of functions $\{ u(t+\tau_n (\alpha_1),x+nL) \}_{n \in \mathbb{N}}$ is relatively compact, so that we can assume that it converges, up to extraction of some subsequence, to some $w_\infty (t,x;\alpha_1)$, which is the unique steepest entire solution of \eqref{eqn1} satisfying $w_\infty (0,0;\alpha_1) =\alpha_1$. Hence, $w_\infty$ does not depend on the choice of the subsequence. It follows that the whole sequence indeed converges to $w_\infty$ as $n \rightarrow +\infty$.

Moreover, from the definition of $\tau_n (\alpha_1)$ and the monotonicity of any $\omega$-limit, we have that $\partial_t w_\infty \geq 0$. One can also note that 
$$u(t,x+L) < u(t,x),$$
hence
$$w_\infty (t,x+L) \leq w_\infty (t,x),$$
for any $(t,x) \in \R^2$. This follows from the comparison principle and the fact that $u(t,\cdot+L)$ is the solution of \eqref{eqn1} with initial datum $H(-L-x) p(x) \leq H(-x) p(x)$. It immediately implies that the sequence $\tau_n (\alpha_1)$ is increasing. Let us now prove the following claim:
\begin{claim}
The sequence $\tau_{n+1} (\alpha_1) - \tau_n (\alpha_1)$ converges to either a positive constant $T>0$, or to $+\infty$. 
\end{claim}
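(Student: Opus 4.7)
The plan is to identify any finite subsequential limit $T$ of the sequence $T_n := \tau_{n+1}(\alpha_1) - \tau_n(\alpha_1)$ via the limit profile $w_\infty$, then to use the strict monotonicity of $w_\infty$ in $t$ to pin $T$ down and upgrade to convergence of the whole sequence. If no finite subsequential limit exists, then by definition $T_n \to +\infty$, and the claim holds.

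First, assume $T_{n_k} \to T \in [0, +\infty)$ along some subsequence. Using the $L$-periodicity of \eqref{eqn1} in $x$ I rewrite
\[
u(\tau_{n_k+1} + t, x + (n_k+1)L) = u\bigl(\tau_{n_k} + T_{n_k} + t, x + L + n_k L\bigr)
\]
and pass to the limit. The left-hand side tends to $w_\infty(t,x)$ by the full-sequence convergence applied at index $n_k + 1$, while the right-hand side tends to $w_\infty(t + T, x+L)$ by the full-sequence convergence applied at index $n_k$, together with the continuity of $w_\infty$ and $T_{n_k} \to T$. Hence
\[
w_\infty(t,x) = w_\infty(t+T, x+L) \qquad (\star)
\]
for all $(t,x)$, and in particular $w_\infty(T, L) = w_\infty(0, 0) = \alpha_1$.

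The main step is to show that $w_\infty$ is strictly monotone in $t$ and is not $L$-periodic in $x$. I argue by contradiction. Since $\partial_t w_\infty \geq 0$ and, by the $\omega$-limit dichotomy noted earlier in the paper, $\partial_t w_\infty$ is either identically zero or strictly positive, the only way the strict conclusion can fail is that $w_\infty$ is stationary, in which case $(\star)$ forces $w_\infty$ to be $L$-periodic in $x$; likewise, $T = 0$ immediately turns $(\star)$ into $w_\infty(t, \cdot + L) \equiv w_\infty(t, \cdot)$. In both scenarios I invoke Lemma~\ref{lem:omega-steep}: $w_\infty$ is steeper than the decomposition wave $V_{i+1}$ connecting $q_{i+1}$ to $q_i$. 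Since $w_\infty(0,0) = \alpha_1 \in (q_{i+1}(0), q_i(0))$, the intermediate value theorem applied to $V_{i+1}(t_2, \cdot) - w_\infty(t_1, \cdot)$ produces a crossing point $x_*$ (for suitable times $t_1, t_2$); the steepness inequality then gives $w_\infty(t_1, x) \geq V_{i+1}(t_2, x)$ for all $x \leq x_*$. Combined with the sliding asymptotic $V_{i+1}(t_2, x) - q_i(x) \to 0$ as $x \to -\infty$ and with the $L$-periodicity of $w_\infty(t_1, \cdot)$, this upgrades to the genuinely pointwise bound $w_\infty(t_1, y) \geq q_i(y)$ for every $y$, contradicting $w_\infty(0, 0) = \alpha_1 < q_i(0)$ (choosing $t_1 = 0$).

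Once $w_\infty$ is strictly increasing in $t$, the relation $w_\infty(T, L) = \alpha_1$ uniquely determines $T$, and $T > 0$ is forced by the preceding step. To upgrade subsequential convergence to convergence of the full sequence I use strict monotonicity: for any $\varepsilon > 0$, $w_\infty(T - \varepsilon, L) < \alpha_1 < w_\infty(T + \varepsilon, L)$, and the locally uniform full-sequence convergence $u(\tau_n + t, x + nL) \to w_\infty(t, x)$ yields, for all sufficiently large $n$, $u(\tau_n + T - \varepsilon, (n+1)L) < \alpha_1 < u(\tau_n + T + \varepsilon, (n+1)L)$, so that $T_n \in (T - \varepsilon, T + \varepsilon)$. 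Hence $T_n \to T > 0$ in this case, and $T_n \to +\infty$ otherwise. The main obstacle is the contradiction step: transferring the \emph{sliding} asymptotic behaviour of $V_{i+1}$ (forced by the spatial heterogeneity) into a true pointwise bound on $w_\infty$; here the $L$-periodicity of $w_\infty$ in $x$ is essential, as it reduces the comparison to a single period.
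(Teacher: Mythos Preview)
Your proof is correct and follows the same route as the paper's: pass to a subsequential limit $T$, derive the identity $w_\infty(t,x)=w_\infty(t+T,x+L)$, rule out $T=0$ (and stationarity of $w_\infty$) via the resulting spatial $L$-periodicity contradicting steepness against $V_{i+1}$, and then upgrade to full-sequence convergence by uniqueness of $T$. One small imprecision in your last step: from $u(\tau_n+T-\varepsilon,(n+1)L)<\alpha_1$ alone you cannot directly infer $T_n>T-\varepsilon$, since $u(\cdot,(n+1)L)$ is not known to be monotone in time; however this lower bound is unnecessary, because the upper bound $T_n<T+\varepsilon$ already yields $\limsup T_n\le T$, and since every finite subsequential limit equals $T$ by your strict-monotonicity argument, $T_n\to T$ follows.
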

\begin{proof} Assume that $\tau_{n+1} (\alpha_1) - \tau_n (\alpha_1)$ does not converge to $+\infty$. Then, up to extraction of some subsequence, $\tau_{n+1} (\alpha_1) - \tau_n (\alpha_1) \rightarrow T \geq 0$ as $n \rightarrow +\infty$. Therefore, for any $(t,x) \in \R^2$:
\begin{eqnarray*}
w_\infty (t+T,x+L;\alpha_1)&=&\lim_{n\rightarrow +\infty} u(t+\tau_n (\alpha_1) + \tau_{n+1} (\alpha_1) - \tau_n(\alpha_1),x+(n+1)L)\\
&=&\lim_{n\rightarrow +\infty} u(t+ \tau_{n+1} (\alpha_1),x+(n+1)L)\\
&=&w_\infty (t,x;\alpha_1).
\end{eqnarray*}
If $T=0$, this means that $w_\infty$ is periodic in space. This is not possible. Indeed, one would then have that $w_\infty (0,nL;\alpha_1) =\alpha_1$ for any $n \in \mathbb{Z}$, which contradicts the fact that it is steeper than $V_{i +1}$ connecting $q_{i +1}$ to $q_{i}$. On the other hand, if $T >0$, then it is a monotonically increasing pulsating traveling wave, with speed $c = \frac{L}{T}$. By uniqueness of the limit and of the speed of a traveling wave, it follows that the whole sequence $\tau_{n+1} (\alpha_1) -\tau_n (\alpha_1)$ converges to $T$, which concludes the proof of this claim.\end{proof}\\

We have shown, along with the above claim, that when $\tau_{n+1} (\alpha_1) - \tau_n (\alpha_1) \rightarrow T>0$, then $w_\infty$ is a pulsating wave. As it is steeper than any other entire solution of \eqref{eqn1}, and from the choice of $\alpha_1$, one can easily check that it connects some $q_{i_1}$ to $q_{i_0}$ with $i_1 > i$ and $i_0 \leq i$. As $w_\infty$ and $w$ (obtained above as an $\omega$-limit in the non moving frame) are both steeper than each other, they cannot intersect, nor can any of their translates by some space shift $kL$ with $k\in \mathbb{Z}$. Thus, one in fact has that $i_0 = i$. 

Now, consider the case when $\tau_{n+1} (\alpha_1) - \tau_n (\alpha_1)$ tends to $+\infty$. This means that 
$$w_\infty (t,L;\alpha_1) \leq \alpha_1, \ \forall t \geq 0.$$
By its monotonicity and boundedness, we know that $w_\infty$ converges to a stationary solution as $t \rightarrow +\infty$, such that $w_\infty (+\infty,0;\alpha_1) \geq \alpha_1$ and $w_\infty (+\infty,L;\alpha_1) \leq \alpha_1$, and which is still steeper than any other entire solution of \eqref{eqn1}. From this and the fact that $w_\infty (+\infty,x+L;\alpha_1) \leq w_\infty (+\infty,x;\alpha_1)$ for any $x\in \mathbb{R}$, one can check as before that $w_\infty (+\infty,x;\alpha_1)$ converges to some $q_{i_1}$ with $i_1>i$ as $x\rightarrow +\infty$, and to $q_{i}$ as $x \rightarrow -\infty$. Hence, it is a pulsating traveling wave with speed 0 connecting~$q_{i_1}$ to~$q_i$.

In both cases, we have obtained a pulsating traveling wave with nonnegative speed connecting some $q_{i_1}$ with $i_1 >i$ to $q_{i}$. If $q_{i_1}$ is positive, we can reiterate by choosing $q_{i_1 +1} < \alpha_2 < q_{i_1} (0)$, and find a pulsating traveling wave connecting some $q_{i_2}$ with $i_2 > i_1$ to $q_{i_1}$. The iteration stops when $q_{i_n} \equiv 0$ for some $n \in \mathbb{N}$. This clearly happens in a finite number of steps, from the fact that there is a finite number of $q_k$. Therefore, we obtain a decreasing sequence of periodic stationary solutions $(p_k)_{0 \leq k \leq N_1}$, and for each $k$ a pulsating traveling wave $U_k$ connecting $p_{k+1}$ to $p_k$. Furthermore, from the construction of the $U_k$ above, we also have that
\begin{equation}\label{eqn:speed_ck}
c_k = \lim_{n \to +\infty} \frac{L}{\tau_{n+1} (\alpha_k) - \tau_n (\alpha_k)} = \lim_{n \rightarrow +\infty} \frac{nL}{\tau_n (\alpha_k)}
\end{equation}
for some well-chosen decreasing and finite sequence $\alpha_k$.

Since $\tau_n (\alpha)$ is increasing with respect to $\alpha$, the sequence $c_k$ is nondecreasing. Finally, we have constructed a propagating terrace connecting 0 to $q_{i}$ with nonnegative speeds.\\

One can proceed similarly to get a propagating terrace connecting $q_{j}$ to $p$ with nonpositive speeds. We immediately get a propagating terrace connecting~0 to~$p$ by combining both terraces constructed above, as well as the stationary wave $w$ when it is not trivial. We end the proof of Theorem~\ref{th:exists} by noting that each part of this terrace is steeper than any other entire solution, as follows from the fact that each of those parts is also an $\omega$-limit orbit of the solution of \eqref{eqn1} with Heaviside type initial datum. One can easily check that this also immediately implies minimality of the terrace.

\subsection{Convergence to the unique minimal terrace}

Let us now proceed to the proof of Theorem~\ref{th:CV}, and assume that there exists a minimal terrace $\mathcal{T} =((p_k)_k,(U_k)_k)$ such that for any $k$, the speed $c_k$ of $U_k$ is not equal to 0. According to Theorem~\ref{th:unique_min} (that we already proved in Section~\ref{sec:intro}), the minimal terrace is unique up to time shifts. Moreoever, thanks to Theorem~\ref{th:exists}, we also know that each of the $p_k$ and $U_k$ is steeper than any other entire solution lying between~0 and~$p$.

Let $u$ be the solution of \eqref{eqn1} with initial datum $u_0$. We recall that we assume that~$u_0$ satisfies the limiting conditions $u(x)-p(x) \rightarrow 0 $ as $x\rightarrow -\infty$, and $u(x) \rightarrow 0$ as $x \rightarrow +\infty$, and also that $u_0$ is steeper than any $p_k$ and $U_k$. First note that the latter assumption implies that $u_0$ is steeper than any entire solution lying between 0 and~$p$. Indeed, let some entire solution $v(t,x)$ of \eqref{eqn1}, lying between 0 and~$p$, such that for some $t$ and $x_1$, $u_0 (x_1) = v(t,x_1)$. Then, there exists some $k$ such that either $p_k (x_1)=u_0 (x_1)$, or $p_{k+1} (x_1) < u_0 (x_1) < p_k (x_1)$. In the former case, as $u_0$ is steeper than $p_k$ and $p_k$ is steeper than $v$, one gets that $u_0 (x) \geq v (t,x)$ for any $x \leq x_1$, and $u_0 (x) \leq v(t,x)$ for any $x \geq x_1$. In the latter case, there exists some $t'$ such that $U_k (t',x_1)=u_0 (x_1)=v(t,x_1)$, and one can again conclude by the same argument that $u_0$ is steeper than $v$.\\

We now turn back to the proof of Theorem~\ref{th:CV}, which will share many similarities with the previous subsection. The difficulty here is to make sure that the $\omega$-limit of $u$ around a given level set is unique, which is necessary to get the wanted convergence. As before, we know by Lemma~\ref{lem:omega-steep} that any $\omega$-limit $v$ of~$u$ is steeper than any other entire solution lying between $0$ and $p$, and that it satisfies either $\partial_t v \equiv 0$, $\partial_t v >0$ or $\partial_t v <0$.\\

We first prove the following claim:
\begin{claim}\label{claim_21} The function $u(t,x)$ converges locally uniformly to the unique $p_i$ such that $c_k <0$ for all $k \leq i$ and $c_k >0$ for all $k>i$.
\end{claim}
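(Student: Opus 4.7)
The plan is to take any $\omega$-limit orbit $v$ of $u$ in the non-moving frame (with integer shifts $k_j$ in Definition~\ref{def:omegalimit} bounded, so without loss of generality $k_j\equiv 0$) and to show $v\equiv p_i$.

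By Lemma~\ref{lem:omega-steep}, $v$ is steeper than every entire solution lying between $0$ and $p$. Applied with $v$ itself translated in time, this forces $v$ to be monotone in $t$; replacing $v$ by $v(+\infty,\cdot)$ if necessary, I may assume $v$ is stationary. By Theorem~\ref{th:exists}, each $U_k$ is also steeper than every entire solution between $0$ and $p$, so $v$ and $U_k(\cdot+s,\cdot)$ are mutually steeper for every $s$ and $k$. An intersection $v(x_0)=U_k(s,x_0)$ would then force $v(\cdot)\equiv U_k(s,\cdot)$ and hence $v\equiv U_k(\cdot+s,\cdot)$ as entire solutions, contradicting $c_k\neq 0$ and $v$ being stationary. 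Thus $v(x)\notin(p_k(x),p_{k-1}(x))$ for every $x$ and every $k$. Since these open intervals together with the values $\{p_k(x)\}$ exhaust $[0,p(x)]$, and $v$ is continuous, $v\equiv p_{k_0}$ for some $k_0\in\{0,\ldots,N\}$.

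It remains to identify $k_0=i$. Fix $\bar x\in\R$ and consider the unique intersection $\xi_k(t,s)\in\R$ of $u(t,\cdot)$ with $U_k(t+s,\cdot)$, which is well defined by steepness together with the asymptotic behaviors of $u_0$ at $\pm\infty$. Since $c_i<0$, $U_i(t,\cdot)\to p_i(\cdot)$ uniformly on compacts as $t\to+\infty$; and since $c_{i+1}>0$ with left limit $p_i$, also $U_{i+1}(t,\cdot)\to p_i(\cdot)$. Extracting a subsequence with $\xi_i(t_j,s)\to\xi^\star\in[-\infty,+\infty]$: if $\xi^\star\in\R$, then passing to the limit in $u(t_j,\xi_i(t_j,s))=U_i(t_j+s,\xi_i(t_j,s))$ gives $p_{k_0}(\xi^\star)=p_i(\xi^\star)$ and therefore $k_0=i$; if $\xi^\star=+\infty$, the steepness comparison $u\geq U_i$ on the left of the crossing yields $u(t_j,\bar x)\geq U_i(t_j+s,\bar x)\to p_i(\bar x)$, hence $k_0\leq i$; if $\xi^\star=-\infty$, the opposite comparison gives $k_0\geq i$. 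The parallel analysis applied to $U_{i+1}$ produces complementary constraints.

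The hardest part is ruling out $k_0\neq i$ when the crossings $\xi_i,\xi_{i+1}$ both escape to the same infinity. The plan is to apply the intersection analysis to every wave $U_k$ of the minimal terrace: if $k_0<i$, then $u(t_j,\bar x)\to p_{k_0}(\bar x)>p_{k_0+1}(\bar x)$ forces the crossing $\xi_{k_0+1}(t_j,s)$ to lie strictly to the right of $\bar x$ for large $j$, while the ordering of the $\xi_k$'s, coming from the disjointness of the value ranges of consecutive waves, and the boundary behavior $u(t,+\infty)\to 0$ eventually yield a contradiction; the case $k_0>i$ is symmetric. The edge cases $i=0$ and $i=N$ follow from a one-sided version of this sandwich together with the asymptotic conditions on $u_0$.
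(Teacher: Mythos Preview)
Your reduction to a stationary $\omega$-limit and the identification $v\equiv p_{k_0}$ for some $k_0$ are correct (though to close the loop you should note that once every \emph{stationary} $\omega$-limit is shown to equal $p_i$, any strictly monotone $\omega$-limit $v$ is excluded because $v(-\infty,\cdot)$ and $v(+\infty,\cdot)$ would both equal $p_i$). The paper instead argues directly that $w$ cannot be strictly monotone, by producing a second $\omega$-limit touching $w$ at a point with the opposite sign of $\partial_t$; this is a minor difference.

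The genuine gap is in your identification $k_0=i$. Your crossing-point analysis with $U_i$ and $U_{i+1}$ only pins down $k_0$ when the crossings go to opposite infinities or one stays bounded; when both $\xi_i^\star$ and $\xi_{i+1}^\star$ equal $+\infty$ (resp.\ $-\infty$) you obtain only $k_0\leq i$ (resp.\ $k_0\geq i$), and the sketch you give for these cases does not yield a contradiction. For instance, if $k_0<i$, knowing that $\xi_{k_0+1}(t_j,s)>\bar x$ eventually and that $u(t,\cdot)\to 0$ as $x\to+\infty$ for fixed $t$ says nothing about what happens as $j\to\infty$: the crossings may simply drift to $+\infty$, since you have no control on $u(t_j,\cdot)$ far to the right of $\bar x$. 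The appeal to an ``ordering of the $\xi_k$'s'' is also not justified, as $u(t,\cdot)$ need not be monotone in $x$.

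The paper avoids this entirely. Once $w\equiv p_{k_0}$, it reruns the construction of Section~\ref{sec:existence} starting from the solution $u$ itself: this builds a minimal terrace containing $p_{k_0}$ as a platform, with all waves above $p_{k_0}$ having nonpositive speed and all waves below having nonnegative speed (this is exactly how the $\tau_n(\alpha)$ construction splits around the stationary $\omega$-limit in the non-moving frame). By uniqueness of the minimal terrace with nonzero speeds (Theorem~\ref{th:unique_min}), this terrace coincides with $\mathcal{T}$, forcing $c_k<0$ for $k\leq k_0$ and $c_k>0$ for $k>k_0$, i.e.\ $k_0=i$. Since $i$ is then independent of the subsequence, full locally uniform convergence follows.
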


\begin{proof} 
First, take some sequence $t_j \rightarrow +\infty$ such that $u(t+t_j,x)$ converges locally uniformly to some $\omega$-limit $w (t,x)$. Recall that $w$ is either stationary, monotonically increasing or decreasing in time. 

If it is monotonically increasing, we can define $w_1(x):= w(-\infty,x) < w_2 (x) := w(+\infty,x)$, both being also $\omega$-limits of $u$ in the non moving frame. Let now $w_1 (0) < \alpha := w(0,0) < w_2 (0)$. Thus, there exists a sequence $t'_j \rightarrow +\infty$ such that for any $j \in \mathbb{N}$, $u(t'_j,0)=\alpha$ and $\partial_t u(t'_j,0) \leq 0$. Up to extraction of some subsequence, the sequence $u(t+t'_j,x)$ converges to an $\omega$-limit $v$ such that $v(0,0)=\alpha=w(0,0)$ and $\partial_t v(0,0) \leq 0 < \partial_t w (0,0)$. This is a contradiction as $v$ and $w$ are both steeper than each other. Similarly, one can prove that $w$ is not monotonically decreasing in time either, hence it is stationary.

Moreover, there exists some $i$ such that either $w(0) =p_i (0)$, or $w(0)=U_i (t,0)$ for some $t \in \mathbb{R}$. Since $w$, $p_i$ and $U_i$ are all steeper than each other, it follows that either $w \equiv p_i$ or $w \equiv U_i$. The latter is not possible because $U_i$ is not stationary by assumption. Hence, $w \equiv p_i$. Using the same method as in the previous section, one can construct a minimal terrace $\mathcal{T}'$ including the platform~$p_i$, so that any traveling wave below $p_i$ moves with nonnegative speed, and any wave above $p_i$ moves with nonpositive speed. By uniqueness, one immediately gets that $\mathcal{T}$ and $\mathcal{T}'$ are the same terrace up to time shifts, thus $i$ is the unique index such that $c_k < 0$ for all $k \leq i$ and $c_k >0$ for all $k>i$.
In particular, the limit $w$ did not depend on the choice of the sequence $t_j \rightarrow +\infty$, so that $u(t,x)$ converges locally uniformly to $p_i$ as $t \rightarrow +\infty$.
\end{proof}\\

Let us now go back to the proof of Theorem~\ref{th:CV}. Take, for any $k$, $\alpha_k \in (p_k (0),p_{k-1} (0))$. Then define, for any $j \in \mathbb{N}$:
$$\tau_j (\alpha_k) = \left\{
\begin{array}{l}
\min \{ t \geq 0 \; | \ u (t,jL) =\alpha_k \} \mbox{ if } k > i\vspace{3pt},\\
\min \{ t \geq 0 \; | \ u (t,-jL) =\alpha_k \} \mbox{ if } k\leq i.
\end{array}
\right.
$$
Note that those are well-defined, at least for $j$ large enough, from the assumption that $u_0 - p \rightarrow 0$ as $x \rightarrow -\infty$ and $u_0 \rightarrow 0$ as $x \rightarrow +\infty$, along with the locally uniform convergence to $p_i$ proved above.

As in the previous section, we know that the following $\omega$-limit
$$w_\infty (t,x;\alpha_k) = \lim_{j\rightarrow +\infty} u(t+\tau_j (\alpha_k),x+jL)$$
is well defined. Moreover, it is steeper than $U_k$ and conversely (recall that $U_k$ is steeper than other entire solutions). As $U_k$ moves with non zero speed, there exists some time $t$ such that $U_k (t,0)=\alpha_k = w_\infty (0,0;\alpha_k)$. We conclude that $U_k \equiv w_\infty (\cdot,\cdot;\alpha_k)$ up to some time shift, that is, without loss of generality:
$$U_k (t,x) = \lim_{j\rightarrow +\infty} u(t+\tau_j (\alpha_k),x+jL),$$
where the convergence holds locally uniformly in time and space.\\

We can now proceed to the proof of the wanted convergence. The proof is the same as in \cite{DGM}, but we include it for the sake of completeness. Let us first deal with the locally uniform convergence to the pulsating traveling waves $U_k$ with $c_k \neq 0$ along the moving frames with speed $c_k$ and some sublinear drifts. Fix some $k$ such that $c_k >0$ and, for any large~$t$, define $j(t) \in \mathbb{N}$ such that
$$j(t) \frac{L}{c_k} \leq t < \left(j(t) +1\right) \frac{L}{c_k},$$
and $m_k (t)$ the piecewise affine function defined by
$$m_k (t)= \tau_{j(t)} (\alpha_k) -t \ \ \text{   if   } \ t =j(t)\frac{L}{c_k}.$$
Recalling~\eqref{eqn:speed_ck}, the sequence $\left\{\frac{\tau_{j} (\alpha_k)}{j}\right\}_j$ converges to $\frac{L}{c_k}$, so that $m_k(t) = o(j(t))=o(t)$ as $t \to +\infty$.

Furthermore, since
$$U_k (t,x)= w_\infty (t,x;\alpha_k) = \lim_{j \rightarrow +\infty} u (t+\tau_{j} (\alpha_k), x +jL)$$
where the convergence is understood to hold locally uniformly with respect to $(t,x)\in\mathbb R^2$, and since $t+m_k (t)-\tau_{j(t)} (\alpha_k) \sim (t-j(t) \frac{L}{c_k}) = O_{t \rightarrow +\infty} (1)$ and $x+c_k t - j(t) L$ stay bounded, one can check that
$$u (t+m_k (t),x+c_k t) - U_k \left(t-j(t) \frac{L}{c_k},x - j(t)L +c_k t  \right) \to 0 \ \ \text{ as } \ t \to +\infty.$$
Thus,
$$u (t,x+c_k (t-m_k (t))) - U_k \left( t- m_k (t),x+c_k (t -m_k (t) )\right) \to 0 \ \ \text{ as } \ t \to +\infty, $$
wherein both of the above convergences hold locally uniformly with respect to~$x \in \R$. The case $c_k <0$ can easily be dealt with the same way.\\

It now remains to consider what happens ``outside" of the moving frames with speed $(c_k)_{1 \leq k \leq N}$. We first claim the following monotonicity property:
\begin{equation}\label{eqn:mon}
u (t,x) \geq u (t,x+L), \mbox{ for any } x \in \R \mbox{ and } t \geq 0.
\end{equation}
From the periodicity of the equation and the parabolic comparison principle, one only has to check the above inequality for $t =0$. Let any $x \in \R$, and let $k$ such that either $u_0 (x)=p_k (x)$ or $u_0 (x) = U_k (t,x)$ for some $t \in \R$. Since $u_0$ is steeper than $p_k$ and $U_k$, one has that either 
$$u_0 (x+L) \leq p_k (x+L)= p_k (x) = u_0 (x),$$
or
$$u_0 (x+L) \leq U_k (t,x+L) \leq U_k (t,x)=u_0 (x).$$
We conclude, as announced, that the inequality \eqref{eqn:mon} is satisfied.

Let us go back to the proof of convergence ``outside" the moving frames. We first look near $-\infty$, that is when $x+ c_1 (t-m_1 (t)) \to -\infty$. In that case, we use the fact that
\begin{equation}\label{eqn:asymp0}
\lim_{x \rightarrow -\infty} U_1 (t,x+c_1t) - p(x) = 0,
\end{equation}
uniformly with respect to time. Let any small $\delta >0$, and $x_\delta$ such that for all~$t$:
$$ p(x)-\frac{\delta}{2} \leq U_1(t-m_1 (t),x+c_1 (t-m_1 (t))) \leq p(x) \text{  for all } x\leq -x_\delta +L.$$
Let now $t$ large enough so that for $x\in [-x_\delta , x_\delta]$, we have that $$| u (t,x+c_1 (t-m_1 (t))) - U_1 (t-m_1 (t),x+c_1 (t-m_1 (t)))| \leq \frac{\delta}{2}.$$
Then, using \eqref{eqn:mon}, one gets for all large $t$:
$$ p(x)-\delta \leq u (t,x+c_1 (t-m_1 (t))) \leq p(x) \text{  for all } x\leq -x_\delta +L.$$
One can proceed similarly to get that for any $\delta >0$, there exists $C$ such that for any $x \geq C$,
$$ | u (t,x+c_N (t-m_N (t)))  | \leq \delta.$$
Lastly, let any integer $1 \leq k \leq N$. Then
$$
\lim_{x \rightarrow +\infty} U_k (t,x+c_k t) - p_k (x) =0 \mbox{  and  }
\lim_{x \rightarrow -\infty} U_{k+1} (t,x+c_{k+1}t ) - p_k (x) =0,
$$
where the convergences are uniform in time.
As above, one can use \eqref{eqn:mon} to show that there exists $C$ such that for large time $t$:
$$ p_k(x)+\delta \geq u (t,x+c_k (t-m_k (t)))\text{  for all } x\geq C.\vspace{3pt}$$
$$ p_k (x)-\delta \leq u (t,x+c_{k+1} (t-m_{k+1} (t))) \text{  for all } x\leq -C.$$
This ends the proof of Theorem~\ref{th:CV}.

\begin{remark}
In general, even when zero speeds may occur, there still exists some minimal and steepest terrace thanks to our existence Theorem~\ref{th:exists}. However, it is no longer unique. Although one could proceed as above to get a similar convergence result on the level sets with positive or negative speeds, the zero speed case is much more difficult to describe and would involve a wider range of entire solutions which may not be stationary waves.
\end{remark}

\section{Minimality of the speeds of the minimal terrace}\label{sec:minspeed}

In this section, we first show a theorem of independent interest, namely Theorem~\ref{th:lem_speeds} below, stating that the steepest traveling wave is also the slowest under some strong stability assumption on the upper state. We will then give as a corollary the minimality of the speeds of the minimal terrace. Furthermore, as we will see in the next section, most of our uniqueness properties on terraces will also follow from Theorem~\ref{th:lem_speeds}.

\subsection{Steepness and minimality of the speeds}

Let us state the fundamental theorem of this section:
\begin{theorem}\label{th:lem_speeds}
Let two traveling waves $U_1$ and $U_2$ connecting respectively $q_1$ and $q_2$ to $q$. Assume that there exist $\delta >0$ and $g$ such that $\mu_g \leq 0$ and $\partial_u f(x,u) \leq g$ for all $x \in \R$ and $u \in [q (x) - \delta, q(x)]$, and that $U_1$ is steeper than $U_2$.

Then $c_1 \leq c_2$, where $c_1$ and $c_2$ are the speeds of respectively $U_1$ and $U_2$.

Moreover, if $c_1 = c_2 \neq 0$, then $U_1 \equiv U_2$ up to some time shift.
\end{theorem}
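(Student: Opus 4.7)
My plan is to argue by contradiction, supposing $c_1 > c_2$. The natural object to study is $w_\tau(t,x) := U_1(t+\tau, x) - U_2(t, x)$, which solves a linear parabolic equation with bounded coefficients as the difference of two solutions of \eqref{eqn1}. The hypothesis that $U_1$ is steeper than $U_2$ tells me precisely that $w_\tau(t,\cdot)$ has at most one sign change on $\R$ at every fixed $t$ and every $\tau$, crossing from $\geq 0$ on the left to $\leq 0$ on the right.

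The heart of the argument is a Sturmian comparison (the zero-number machinery of Section~\ref{sec:preliminaries}) of $w_\tau(t,\cdot)$ as $t \to \pm\infty$. Since the front of $U_i$ sits near $x = c_i t$ up to a bounded drift, the hypothesis $c_1 > c_2$ places the front of $U_1(t+\tau)$ far to the right of that of $U_2(t)$ at large positive $t$ and far to the left at large negative $t$. Between the two fronts each wave has already reached the asymptote on the relevant side of its own transition: at $t \to +\infty$ the intermediate plateau has $U_1 \approx q$ and $U_2 \approx q_2$, so $w_\tau \approx q - q_2 > 0$ there; at $t \to -\infty$ the plateau has $U_1 \approx q_1$ and $U_2 \approx q$, so $w_\tau \approx q_1 - q < 0$ there. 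Combined with $w_\tau \to 0$ as $x \to -\infty$ and $w_\tau \to q_1 - q_2$ as $x \to +\infty$, when $q_1 < q_2$ strictly one obtains at least one sign change at large positive $t$ and none at large negative $t$, contradicting the non-increase of the sign-change count of a solution of a linear parabolic equation.

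The case $q_1 = q_2$, together with the subtleties on the far left where the decay rates of $V_i := q - U_i$ determine the local sign of $w_\tau$, is the main obstacle of the proof. Here the stability of $q$ becomes essential: the hypothesis $\partial_u f(x,u) \leq g(x)$ on $[q(x)-\delta, q(x)]$ yields $\partial_t V_i \leq \partial_x(a\partial_x V_i) + g V_i$ wherever $V_i \leq \delta$, so each $V_i$ is a sub-solution of a linear equation whose principal eigenvalue is $\mu_g \leq 0$. Using the principal eigenfunction $\phi_g > 0$ together with the time-bounded pulsating structure of the waves in their own moving frames, the plan is to build a positive super-solution that either precludes a spurious sign change of $w_\tau$ on the far left at large negative $t$, or forces $w_\tau(t_0,\cdot) \leq 0$ everywhere for a well-chosen $t_0$ and $\tau$, at which point the parabolic comparison principle contradicts the strictly positive plateau of $w_\tau$ at large $t$. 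This step is what replaces the asymptotic expansion of waves used in~\cite{Hamel08} by a softer, purely eigenvalue-based comparison.

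For the equality case $c_1 = c_2 \neq 0$, I would close with a sliding argument. Since both waves move at the same speed, the family $s \mapsto U_1(t+s,\cdot) - U_2(t,\cdot)$ is pointwise nonnegative for $s$ large positive (as $U_1(t+s,\cdot) \to q \geq U_2$) and strictly negative somewhere for $s$ large negative, so there is a minimal $s^*$ for which the inequality $U_1(\cdot + s^*, \cdot) \geq U_2(\cdot, \cdot)$ persists. Either the difference vanishes at an interior point, in which case the strong maximum principle applied to the linear equation it satisfies forces $U_1(\cdot + s^*, \cdot) \equiv U_2$ and gives equality up to a time shift, or the touch occurs only at infinity, a scenario ruled out once more by the $\mu_g$-based comparison with $\phi_g$ developed in the degenerate subcase.
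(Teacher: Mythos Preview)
Your proposal and the paper share the decisive technical ingredient: comparison with $\pm\kappa\phi$, where $\phi$ is the principal eigenfunction associated to $\mu_g\leq 0$, on the left half-line where both waves lie in $[q-\delta,q]$; your sliding argument for $c_1=c_2\neq 0$ is also essentially the paper's, including the handling of the ``touch at infinity'' via the same eigenfunction comparison. The difference lies in the framing of the inequality $c_1\leq c_2$. You attempt a zero-number count of the sign changes of $w_\tau$ as $t\to\pm\infty$, but since steepness already caps this count at one for every $t$, the non-increase of the count carries no force by itself, and you are driven back to the eigenfunction comparison as a patch for the ``far-left subtlety'' at $t\to-\infty$. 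The paper bypasses this detour entirely: it works in the moving frame with speed $c_1$, sets $\eta(t,x)=U_2(t,x+c_1t)-U_1(t,x+c_1t)$, arranges by an initial time shift that $\eta(t,x_0)\geq\delta'/2>0$ for $t\leq 0$, notes that when $c_2<c_1$ one has $\liminf_{t\to-\infty}\inf_{x\leq x_0}\eta\geq 0$ (because $U_2\to q$ in that frame), and propagates this forward via the subsolution $-\kappa\phi(x+c_1t)$ to obtain $\eta(0,\cdot)\geq 0$ on $(-\infty,x_0]$. Steepness then extends $\eta(0,\cdot)>0$ to all of $\R$, whence $U_2\geq U_1$ for all later times and $c_2\geq c_1$. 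No sign-change counting and no case split on the ordering of $q_1,q_2$ is needed. Your route is not wrong, but once you see that the eigenfunction comparison carries all the weight, the paper's direct subsolution formulation on the half-line is shorter and avoids both the separate treatment of $q_1\lessgtr q_2$ and the detour through subsolutions for $V_i=q-U_i$.
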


This means that, whenever the upper state is strongly stable in some sense (including but not limited to linear stability), the steepest traveling wave is the slowest and, if its speed is not zero, it is even the unique slowest traveling wave. 

Moreover, one can also easily check by turning the problem upside down (more precisely, by looking at the traveling waves $-U_1 (t,-x)$ and $-U_2 (t,-x)$), that if it is the lower state which is strongly stable, then the steepest traveling wave is the fastest. Therefore, in the bistable case, this already generalizes to the periodic framework the uniqueness of the traveling wave, which was well known for the homogeneous problem~\cite{AW78,FifMcL77}.\\

Before we prove Theorem~\ref{th:lem_speeds}, we immediately apply it to minimal terraces to get that, since they are the steepest, they are also the slowest in some sense:
\begin{theorem}\label{th:min_speeds}
Let Assumption~\ref{assumption-speed1} be satisfied. Assume that there exists a minimal terrace $\mathcal{T}=((p_k)_k,(U_k)_k)$ and for each $k$, let $c_k$ the speed of $U_k$. Then:
\begin{enumerate}[$(i)$]
\item If $c_k >0$, then $U_k$ is the unique traveling wave with minimal speed among all traveling waves connecting some $q < p_{k-1}$ to $p_{k-1}$.
\item If $c_k <0$, then $U_k$ is the unique traveling wave with maximal speed among all traveling waves connecting $p_{k}$ to some $q> p_k$.
\item If $c_k =0$, then $U_k$ is a traveling wave with either minimal or maximal speed among all traveling waves connecting $p_{k}$ to $p_{k-1}$.
\end{enumerate}
\end{theorem}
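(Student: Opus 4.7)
The plan is to reduce each part of Theorem~\ref{th:min_speeds} to Theorem~\ref{th:lem_speeds}, feeding its local stability hypothesis via Assumption~\ref{assumption-speed1}. I will concentrate on part $(i)$; part $(ii)$ will follow by the symmetric argument applied to $\tilde u(t,x) := p(x) - u(t,-x)$, which exchanges upper and lower platforms and flips the sign of the wave speed, and part $(iii)$ will combine the two.

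For part $(i)$, fix $k$ with $c_k > 0$ and let $V$ be any pulsating traveling wave connecting some $L$-periodic stationary solution $q < p_{k-1}$ to $p_{k-1}$, with speed $c$. By Theorem~\ref{th:exists}, $U_k$ is steeper than every entire solution lying between $0$ and $p$, and hence steeper than $V$. The core step is to verify that $p_{k-1}$ is stable from below with respect to~\eqref{eqn1-per}, so that Assumption~\ref{assumption-speed1} produces the $\delta>0$ and the $L$-periodic $g$ with $\mu_g \leq 0$ and $\partial_u f(x,u) \leq g(x)$ on $[p_{k-1}(x) - \delta, p_{k-1}(x)]$ demanded by Theorem~\ref{th:lem_speeds}. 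Once this is in place, Theorem~\ref{th:lem_speeds} yields $c_k \leq c$; and in the equality case, $c_k \neq 0$ together with the second conclusion of Theorem~\ref{th:lem_speeds} forces $V \equiv U_k$ up to a time shift, so that $q = p_k$ and $V = U_k$, which proves the claimed uniqueness.

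To establish that $p_{k-1}$ is stable from below, I would exploit the structure of $U_k$ itself. Writing $U_k(t,x) = \tilde U_k(x - c_k t, x)$ with $\tilde U_k$ monotone decreasing in its first variable, the assumption $c_k > 0$ implies that for every fixed $x$ the trajectory $t \mapsto U_k(t,x)$ is monotonically increasing and converges to $p_{k-1}(x)$ as $t \to +\infty$. A linearization of~\eqref{eqn1} at $p_{k-1}$ then forces the principal eigenvalue $\mu_{g_0}$ of $\partial_x(a\partial_x \cdot) + g_0 \cdot$ with $g_0(x) := \partial_u f(x, p_{k-1}(x))$ to be non-positive: a positive principal eigenvalue would make $p_{k-1} - U_k$ grow exponentially in $t$, contradicting the convergence. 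Any $L$-periodic initial datum $u_0 \in [p_{k-1} - \delta, p_{k-1}]$ can then be trapped below $p_{k-1}$ by comparison with the stationary super-solution $p_{k-1}$ and above a suitable spatial translate of $U_k$ (which for sufficiently negative argument lies arbitrarily close to $p_{k-1}$ from below); the comparison principle then forces the $\omega$-limit of $u_0$ to be $p_{k-1}$, yielding nonlinear stability from below.

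Part $(iii)$ is the most delicate case. When $c_k = 0$ the preceding monotonicity-in-time argument degenerates and neither endpoint can be shown stable by the same trick. Instead, I would argue that at least one of ``$p_{k-1}$ stable from below'' or ``$p_k$ stable from above'' must hold, for otherwise $L$-periodic entire orbits of~\eqref{eqn1-per} strictly between $p_k$ and $p_{k-1}$ would exist and, via the construction of the minimal terrace in Section~\ref{sec:existence}, contradict the maximal steepness of the stationary wave $U_k$. Whichever of the two stabilities holds, Assumption~\ref{assumption-speed1} together with Theorem~\ref{th:lem_speeds} applied at that endpoint then identifies $U_k$ as either the slowest or the fastest pulsating traveling wave connecting $p_k$ to $p_{k-1}$. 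The main obstacle throughout is the nonlinear stability claim, which is particularly subtle when the relevant principal eigenvalue vanishes and purely linear information is insufficient; this requires combining the linearized analysis with the pulsating structure of $U_k$ through the parabolic comparison principle.
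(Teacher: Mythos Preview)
Your overall strategy---reduce to Theorem~\ref{th:lem_speeds} after verifying the stability hypothesis on the relevant endpoint via Assumption~\ref{assumption-speed1}---is exactly the paper's. But two steps need repair.

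\emph{First gap (minor).} You invoke Theorem~\ref{th:exists} to say that $U_k$ is steeper than every entire solution between $0$ and $p$. Theorem~\ref{th:exists} only asserts that \emph{some} minimal terrace has this property; your $\mathcal{T}$ is an arbitrary minimal terrace. The paper closes this by noting that, since $c_k\ne 0$, your $U_k$ and the constructed $U'_k$ can be time-shifted to intersect and, being mutually steeper, must coincide. You should include this short argument before applying Theorem~\ref{th:lem_speeds}.

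\emph{Second gap (substantive).} Your proof that $p_{k-1}$ is stable from below does not work as written. The ``trapping'' step---placing an $L$-periodic datum $u_0\in[p_{k-1}-\delta,p_{k-1}]$ above a spatial translate of $U_k$---fails because $U_k$ is a front: any translate still satisfies $U_k\to p_k<p_{k-1}$ as $x\to+\infty$, so no translate lies below a periodic $u_0$ on all of~$\R$. Likewise, the linearization sentence (``a positive principal eigenvalue would make $p_{k-1}-U_k$ grow exponentially'') conflates the periodic eigenvalue problem with the behaviour of a non-periodic object; it does not yield the needed contradiction. The paper instead proves an independent claim (Claim~\ref{claim:42}): if a pulsating wave reaches $q_1$ with positive speed, then $q_1$ is isolated and stable from below. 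The argument introduces the $\lambda$-shifted principal eigenvalue $\mu(\lambda)$ of $-\partial_x(a\partial_x\cdot)+2\lambda\partial_x\cdot-\partial_u f(x,q_1)\cdot$, observes $\mu(\lambda)-\mu(0)=O(\lambda^2)$, and in the unstable or non-isolated case builds a supersolution $\min\{q_1,\,e^{-\lambda(x-ct)}\phi_\lambda+r_j\}$ with arbitrarily small $c>0$, contradicting the existence of a positive-speed wave. This is what actually feeds Assumption~\ref{assumption-speed1}.

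\emph{Part $(iii)$.} Your outline (``at least one endpoint must be stable, otherwise periodic orbits contradict steepness'') is vague and would be hard to complete. The paper's argument is short and direct: suppose $c_k=0$ were neither minimal nor maximal among waves connecting $p_k$ to $p_{k-1}$. Then there exist waves $V_\pm$ with speeds $c_-<0<c_+$. Claim~\ref{claim:42} applied to $V_+$ makes $p_{k-1}$ stable from below, and its reflected version applied to $V_-$ makes $p_k$ stable from above; Assumption~\ref{assumption-speed1} and Theorem~\ref{th:lem_speeds} then force $c_k$ to be simultaneously minimal and maximal, contradicting $c_-<c_k<c_+$.
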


\begin{proof} 
Let us first note that, when $c_k \neq 0$, the traveling wave $U_k$ is not only steeper than any other traveling wave connecting $p_{k}$ to $p_{k-1}$, but also than any other entire solution of \eqref{eqn1}, and in particular than any traveling wave connecting some $q < p_{k-1}$ to $p_{k-1}$, or $p_k$ to some $q > p_k$. Indeed, we know from Theorem~\ref{th:exists} that there exists a minimal terrace $\mathcal{T}' = ((p_k)_k,(U'_k)_k)$ such that each $U'_k$ is steeper than any other entire solution. Up to some time shift and as $c_k \neq 0$, we can assume without loss of generality that $U_k$ and $U'_k$ intersect and, as they are steeper than each other, we get that $U'_k \equiv U_k$. As announced, this implies that~$U_k$ is steeper than any other entire solution.

Before going back to the proof of Theorem~\ref{th:min_speeds}, let us note that the role of Assumption~\ref{assumption-speed1} is to insure that the hypotheses of Theorem~\ref{th:lem_speeds} are satisfied. This is the subject of the following claim:
\begin{claim}\label{claim:42}
If there exists a pulsating traveling wave connecting some $q_2$ to $q_1$ with positive speed, then $q_1$ is isolated and stable from below.
\end{claim}
\begin{proof}[Proof of Claim~\ref{claim:42}]
We briefly sketch the proof, which is similar to that of Lemma~4.3 in~\cite{DGM}. We first define, for any $\lambda >0$, $\mu (\lambda)$ the principal eigenvalue of 
$$\left\{
\begin{array}{rcl}
-\partial_x (a \partial_x \phi_\lambda ) + 2 \lambda \partial_x \phi_\lambda - \frac{\partial f}{\partial u} (x, q_1 (x)) \phi_\lambda & = & \mu (\lambda) \phi_\lambda \ \mbox{ in } \mathbb{R},\vspace{3pt}\\
\phi_\lambda >0 \ \mbox{ and periodic.}
\end{array}
\right.$$
Adapting a formula from Nadin~\cite{Nadin10}, it is known that
$$
\begin{array}{rcl} \mu (\lambda ) &=&  \displaystyle \min_{\eta \in C^1_{per} , \eta >0} \frac{1}{\int_0^L \eta^2 } \left(  \int_0^L a \eta'^2 - \int_0^L \frac{\partial f}{\partial u} (x,q_1) \eta^2 \right. \vspace{3pt} \\
& & \displaystyle \hspace{3cm} \left.  + \lambda^2 \bigg( \int_0^L a \eta^2 - \frac{L^2}{\int_0^L a^{-1} \eta^{-2}} \bigg)\right).
\end{array}
$$
It in particular follows that $\mu (\lambda) - \mu (0) = O (\lambda^2)$ for small enough $\lambda$. 

Now proceed by contradiction and assume that $q_1$ is not isolated from below. Then there exists a sequence of stationary solutions $r_j \leq q_1$, which tends to $q_1$ as $j \to +\infty$. This implies that $\mu (0)=0$ ($q_1$ is a degenerate equilibrium), thus $\mu (\lambda)= O (\lambda^2)$ for small $\lambda$. It is then straightforward to check that, for $j$ large enough and $c$ arbitrary small, one can construct a supersolution of \eqref{eqn1} of the type
$$\min \{ q_1 (x) , e^{-\lambda (x-ct)} \phi_\lambda (x) + r_j (x) \},$$ 
where $\lambda >0$. This easily contradicts the fact that there exists a pulsating traveling wave connecting some $q_2$ to $q_1$ with positive speed.

A similar argument leads to the same contradiction if $q_1$ is unstable from below. The claim is proved.\end{proof}\\

We now return to the proof of Theorem~\ref{th:min_speeds}. Thanks to Claim~\ref{claim:42} and Assumption~\ref{assumption-speed1}, we can apply Theorem~\ref{th:lem_speeds} to immediately conclude that statement $(i)$ of Theorem~\ref{th:min_speeds} holds. Statement $(ii)$ is very similar, by applying the same argument to $-U_k (t,-x)$. 

Statement $(iii)$ follows from the fact that if there exist two traveling waves $V_-$ and $V_+$ connecting $p_k$ to $p_{k-1}$ with speeds $c_- <0$ and $c_+>0$, then $p_k$ is isolated and stable from above, and $p_{k-1}$ isolated and stable from below. Using Assumption~\ref{assumption-speed1} and Theorem~\ref{th:lem_speeds}, we get that $c_k$ is both the minimal and the maximal speed among traveling waves connecting $p_k$ to $p_ {k-1}$, which is a clear contradiction. This ends the proof of Theorem~\ref{th:min_speeds}.\end{proof}

\subsection{Proof  of Theorem~\ref{th:lem_speeds}}

We begin by assuming that either $c_2 < c_1$, or $c_2 = c_1 \neq 0$. Up to some time shift, thanks to the asymptotics of $U_1$ and the characterization of pulsating traveling waves, one can find $0 <\delta' < \delta$ and $x_0$ such that
$$U_1 (t,x) \in \left[q (x)-\delta,q (x) \right] \; \mbox{ for all } x \leq x_0 + c_1 t,$$
$$U_1 (t,x) \in \left[q (x)-\delta,q (x) - \delta' \right] \; \mbox{ for all } x = x_0 + c_1 t.$$
Let us also assume that
$$U_2 (t,x) \in \left[q (x)-\frac{\delta'}{2},q (x) \right] \; \mbox{ for all } t \leq 0 \; \mbox{ and } x \leq x_0 + c_1 t .$$
This clearly holds whenever $c_1 \geq c_2 \neq 0$, up to some time shift of $U_2$. When $c_2 = 0 <c_1$ one can achieve the same inequality without loss of generality, by shifting $U_1$ in time so that $x_0$ is a large enough negative number.

We have in particular that, for any $t \leq 0$, \begin{equation}\label{compx0}
U_1 (t,x_0+c_1 t) \leq U_2 (t,x_0+c_1 t) - \frac{\delta'}{2}. \end{equation}
We now place ourselves in the moving frame with speed $c_1$, and define
$$\eta (t,x) := U_2 (t, x +c_1 t) - U_1 (t,x+c_1 t).$$
One can check from the above that the function $\eta$ satisfies:
\begin{equation}
\left\{
\begin{array}{rl}
\displaystyle \partial_t \eta = \partial_{x} (a (x) \partial_x \eta ) + c_1 \partial_x \eta + h(t,x) \eta, & \forall (t,x) \in (-\infty,0) \times (-\infty,x_0), \vspace{5pt}\\
\displaystyle \eta (t,x_0) \geq \frac{\delta'}{2} >0, & \forall t \in (-\infty,0),\vspace{5pt}\\
\displaystyle \lim_{x \rightarrow -\infty} \eta (t,x) = 0, & \forall t \in (-\infty,0),
\end{array}
\right.
\end{equation}
where
\begin{eqnarray*}
h(t,x)& := & \left\{
\begin{array}{lc}
\displaystyle \frac{ f(x+c_1 t,U_2 (t, x +c_1 t)) -f(x+c_1 t,U_1 (t, x +c_1 t))}{\eta (t,x)} & \mbox{ if } \eta (t,x) \neq 0 , \vspace{5pt}\\
\displaystyle  \partial_u f(x+c_1 t,U_1 (t,x+c_1 t))& \mbox{ if } \eta (t,x) =0 .
\end{array}\right.
\end{eqnarray*}

We now first proceed by contradiction and assume that $c_2 < c_1$. Therefore, we have that
$$ \liminf_{t \rightarrow -\infty} \inf_{x \leq x_0} \eta (t,x)  \geq 0.$$
Recall that $\mu_g \leq 0$ and $\phi$ are respectively the principal eigenvalue and principal eigenfunction of the periodic problem:
\begin{equation*}
\left\{
\begin{array}{l}
\displaystyle \partial_{x} (a \partial_x \phi ) +g \phi  =  \mu_g \phi \ \mbox{ in } \R, \vspace{5pt}\\
\phi >0 \mbox{ and  $L$-periodic}.
\end{array}
\right.
\end{equation*}
It follows from our assumptions and the choice of $x_0$ that  $h(t,x) \leq g (x+c_1 t)$ for any $t \leq 0$ and $x \leq x_0$.
Then for any $\kappa >0$, the function $\psi (t,x) := - \kappa \phi (x+c_1 t) <0$ satisfies:
\begin{eqnarray*}
&& \displaystyle \partial_t \psi - \partial_{x} (a \partial_x \psi) - c_1 \partial_x \psi - h(t,x) \psi \vspace{3pt}\\
& \leq & \displaystyle \partial_t \psi - \partial_{x}(a \partial_x \psi ) - c_1 \partial_x \psi - g(x+c_1 t) \psi \vspace{3pt}\\
& \leq & \displaystyle \kappa \mu_g \phi (c+c_1 t) \vspace{3pt}\\
& \leq & 0,
\end{eqnarray*}
hence it is a subsolution of the equation satisfied by $\eta$. Furthermore, as $\phi$ is continuous, positive and periodic, and since $\liminf_{t \rightarrow -\infty} \inf_{x \leq x_0} \eta (t,x)  \geq 0$, one can find, for any choice of $\kappa >0$, a time $T <0$ such that
$$\eta (T,x) \geq -\kappa \min \phi, \ \ \forall x \leq x_0.$$
It then follows, from the parabolic maximum principle and the fact that $\eta (t,x_0) \geq 0$ for all $t \leq 0$, that
$$\eta (0,x) \geq -\kappa \max \phi, \ \ \forall x \leq x_0.$$
Since this holds for any $\kappa >0$, we get that $ 0 \leq \eta (0,x) \mbox{ for all } x \leq x_0$
and, since $\eta (0,x_0) >0$, one has by the strong maximum principle that $ \eta (0,x)>0 \mbox{ for all } x \leq x_0.$

Since $U_1$ is steeper than $U_2$, $\eta (0,\cdot)$ must be nonpositive on the left of any zero. Therefore, for all $x \in \R$, $\eta (0,x) = U_2 (0,x) - U_1 (0,x) >0$.
Moreover, from the parabolic comparison principle, we get that for any $t \geq 0$ and $x \in \R$,
$$U_2 (t,x) \geq U_1 (t,x).$$
This clearly implies that $U_2$ has to be faster than $U_1$, that is $c_2 \geq c_1$, and we have reached a contradiction.\\

Consider now the case $c_1 = c_2 \neq 0$. The function $\eta (t,x) := U_2 (t, x +c_1 t) - U_1 (t,x+c_1 t)$ is now periodic in time and converges to 0 as $x \rightarrow -\infty$ uniformly with respect to $t$. Assume by contradiction that $\eta (t'_0, x'_0) <0$ for some $t'_0 \in \R$ and $x'_0 \in \R$. Without loss of generality and by periodicity, we can of course assume that $t'_0 <0$ and, as explained above, as $U_1$ is steeper than $U_2$ we also have that $x'_0 < x_0$. Then, one can define
$$\kappa^* := \min \{ \kappa > 0 \; | \ \eta (t,x) \geq -\kappa \phi (x+c_1 t) \mbox{ for all } t \leq 0 \mbox{ and } x\leq x_0\} >0 ,$$where $\phi$ is again a positive and $L$-periodic function such that
$$\partial_{x} (a \partial_x \phi )+ g \phi = \mu_g \phi.$$
Indeed, this set is non empty from the boundedness of $\eta$, and the minimum is reached since $\eta (t'_0, x'_0) <0$ with $t'_0<0$ and $x'_0 < x_0$.

Furthermore, it is clear that $\eta (t,x) \geq -\kappa^* \phi (x+c_1 t)$ for all $(t,x) \in (-\infty,0] \times (-\infty,x_0]$, with equality for some $(t_1,x_1) \in (-\infty,0] \times (-\infty,x_0)$. Applying the strong maximum principle, we get that $\eta (t,x) \equiv -\kappa^* \phi (x+c_1 t)$, which contradicts the fact that it goes to 0 as $ x \rightarrow -\infty$ for any $t \leq 0$. We conclude that $\eta \geq 0$. That is, we can assume, up to some time shift and without loss of generality, that~$U_1  \leq U_2 $.

Assume that $c_1 >0$. From the asymptotics of $U_1$, it is clear that for any time shift $\tau$ large enough, we have that $U_1 (\tau,0) > U_2 (0,0)$. It follows that the following time shift is well defined:
$$\tau^* := \sup \left\{ \tau \geq 0 \; | \ U_1 (\tau , \cdot ) \leq U_2 (0,\cdot) \right\} < + \infty.$$
When $c_1 < 0$, one instead have that $U_1 (0,0) > U_2 (\tau,0)$ for large enough $\tau$, so that one may rather define $\tau^*$ as the largest nonnegative $\tau$ such that $U_1 (0,\cdot) \leq U_2 (\tau,\cdot)$. Then the proof proceeds similarly as below so that we omit the details of the case $c_1 <0$.

Let us first note that, by continuity, $U_1 (\tau^* , \cdot ) \leq U_2 (0,\cdot).$
Our aim is to show that we in fact have
\begin{equation}\label{eqn:tauabove}
U_1 (\tau^* , \cdot ) \equiv U_2 (0,\cdot).
\end{equation}
It will then immediately follow that $U_1$ is identically equal to $U_2$ up to the time shift $\tau^*$.

Let us argue by contradiction and assume that there exists $x_1 \in \R$ such that $U_1 (\tau^* , x_1 ) < U_2 (0,x_1)$. One can then easily check that
\begin{equation}\label{eqn:tauabovecontrad1}
U_1 (t+\tau^*, x) < U_2 (t,x) \mbox{ for all } (t,x)\in \R^2.
\end{equation}
This indeed follows from the periodicity in time of $U_1 (t+\tau^*,x+c_1t) - U_2 (t,x+c_1t)$ and the strong maximum principle.

It is also clear, by continuity, that for $\tau > \tau^*$ but close enough to $\tau^*$, one has that $U_1 (\tau, x_1) < U_2 (0,x_1)$. Besides, by construction of $\tau^*$, for any $\tau > \tau^*$, there also exists $x_2 \in \R$ such that $U_1 (\tau,x_2) > U_2 (0,x_2)$. Therefore, for some~$\epsilon >0$ we have that $U_1 (\tau,\cdot)$ and $U_2 (0,\cdot)$ intersect at least once for all $\tau \in (\tau^* , \tau^* +\epsilon)$. Because $U_1$ is steeper than $U_2$, and from the periodicity in time of $U_1 (t+\tau,x+c_1t) - U_2 (t,x+c_1t)$, there can only be one (non degenerate) intersection, for each time $t \in \R$,  of $ U_1 (t+\tau,\cdot)$ and $U_2 (t, \cdot)$. Hence, we can define, for any $\tau \in (\tau^*, \tau^* + \epsilon)$, the real-valued function $x (\tau,t)$, which is periodic in its second variable and is such that $$U_1 (t+\tau,x (\tau,t) +c_1t ) = U_2 (t,x (\tau,t)+c_1t ),$$
that is the only intersection of $U_1 (t+\tau,\cdot)$ and $U_2 (t,\cdot)$.\\

Let us now look at the behavior of $x (\tau,t)$ as $\tau \rightarrow \tau^*$. Consider first the case when there exist two sequences $\tau_j \rightarrow \tau^*$ and $t_j$ such that $x (\tau_j,t_j)$ converges to some $x (\tau^*) \in \R$. From the periodicity of $x(\tau,t)$ in the variable $t$, we can assume up to extraction of a subsequence that $t_j \rightarrow t_\infty \in \R$ as $j \rightarrow +\infty$. It immediately follows by passing to the limit as $j \rightarrow +\infty$ that $$U_1 (t_\infty +\tau^*,x (\tau^*)+c_1 t_\infty)= U_2 (t_\infty,x(\tau^*)+c_1 t_\infty).$$
We have reached a contradiction with (\ref{eqn:tauabovecontrad1}).

Consider now the case when there exist two sequences $\tau_j \rightarrow \tau^*$ and $t_j$ such that $x (\tau_j,t_j) \rightarrow +\infty$ as $j\rightarrow +\infty$. Again, we can assume without loss of generality that $t_j \rightarrow t_\infty \in \R$ as $j \rightarrow +\infty$. As $U_1$ is steeper than $U_2$, we know that $U_1 (t_j+ \tau_j,\cdot)$ is above $U_2 (t_j, \cdot)$ on the left of the point $x(\tau_j,t_j) +c_1 t_j$, which goes to $+\infty$ as $j \rightarrow +\infty$. Thus by passing to the limit as $j \rightarrow +\infty$, we get
$$U_1 (t_\infty+ \tau^* , \cdot) \geq U_2 (t_\infty,\cdot),$$
which again is a clear contradiction with (\ref{eqn:tauabovecontrad1}).

The only remaining case is $x (\tau,t) \rightarrow -\infty$ as $\tau \rightarrow \tau^*$, uniformly with respect to its second variable. We proceed similarly as before. Indeed, choose $0 < \delta' < \delta$ and $x_0 \in \R$  such that for any $\tau \in (\tau^*, \tau^*+\epsilon)$,
$$U_1 (t+\tau,x) \in \left[q (x)-\delta,q (x) \right] \; \mbox{ for all } x \leq x_0 + c_1 t,$$
$$U_1 (t+\tau,x) \in \left[q (x)-\delta,q (x) - \delta' \right] \; \mbox{ for all } x = x_0 + c_1 t.$$
Let also $x_1$ such that for all $t \in \mathbb{R}$,
\begin{equation}\label{eqn:ineq42}
U_2 (t,x) \in \left[q (x)-\delta,q (x) \right] \; \mbox{ for all } x \leq x_1 + c_1 t.
\end{equation}
Up to reducing $\epsilon$, for any $\tau \in (\tau^*, \tau^* + \epsilon)$, one has $x (\tau,t) < \min \left\{ x_0 , x_1 \right \}$ for all~$ t\in \R$. Thus, as $U_1$ is steeper,
$$U_1 (t+\tau,x_0+c_1t) \leq U_2 (t,x_0+c_1 t),$$
and moreover, since $U_2$ lies above $U_1 (\cdot +\tau,\cdot)$ on the right of $x(\tau,t) + c_1 t$, and using the inequality (\ref{eqn:ineq42}) on the left of the same point, one gets that
$$U_2 (t,x) \in \left[q (x)-\delta,q (x) \right] \; \mbox{ for all } x \leq x_0 + c_1 t.$$
Similarly as before, the function $\eta(t,x)=U_2 (t,x+c_1t) - U_1 (t+\tau,x+c_1t)$ satisfies
\begin{equation}
\left\{
\begin{array}{rl}
\displaystyle \partial_t \eta = \partial_{x}( a \partial_x \eta ) + c_1 \partial_x \eta + h(t,x) \eta, & \forall (t,x) \in (-\infty,0) \times (-\infty,x_0), \vspace{5pt}\\
\displaystyle \eta (t,x_0) \geq 0, & \forall t \in (-\infty,0),\vspace{5pt}\\
\displaystyle \eta (t,x) \ \mbox{ is periodic in } \; t  & \mbox{ and } \ \eta (t,x) \rightarrow 0 \ \mbox{ as } \; x \rightarrow -\infty,
\end{array}
\right.
\end{equation}
where $h\leq g$ with $\mu_g \leq 0$. As before, we can find a positive and periodic function~$\phi$ such that, for any $\kappa>0$, the function $-\kappa \phi (x+c_1t)$ is a subsolution of the equation above satisfied by $\eta$. 
Again, it is clear that there exists
$$\kappa^* := \min \{ \kappa > 0 \; | \ \eta (t,x) \geq -\kappa \phi(x+c_1 t)  \mbox{ for all } t \leq 0 \mbox{ and } x\leq x_0\} >0.$$
Therefore, using the limiting conditions of $\eta$ near $-\infty$ and $x_0$, we get that $\eta (t,x) \geq - \kappa^* \phi (x+c_1 t)$, with equality for some $(t_1,x_1) \in (-\infty,0]\times(-\infty,x_0]$. From the strong maximum principle, it follows that $\eta (t,x) \equiv -\kappa^* \phi (x+c_1 t) < 0$ for all $t \leq 0$ and $x\leq x_0$, which is a contradiction.

As all the above cases lead to a contradiction, we conclude that (\ref{eqn:tauabove}) holds, that is, $U_2$ is identically equal to some time shift of $U_1$. This ends the proof of Theorem~\ref{th:lem_speeds}.

\section{Uniqueness properties of terraces}\label{sec:uniqueness}

We can now prove our uniqueness theorems. We already gave a brief proof of Theorem~\ref{th:unique_min} in Section~\ref{sec:intro}. We therefore focus here on Theorems~\ref{th:unique_semi} and~\ref{th:unique}.

\subsection{Semi-minimal terraces}

Let us consider a semi-minimal terrace $$\mathcal{T}= \left( (p_k)_{0 \leq k \leq N},(U_k)_{1 \leq k \leq N} \right)$$
connecting 0 to $p$, such that all the speeds $c_k$ of the pulsating traveling waves $U_k$ are not zero. As the minimal terrace is unique when it has non zero speeds, it is enough to prove that $\mathcal{T}$ is minimal to get the first part of Theorem~\ref{th:unique_min}.\\

From Theorem~\ref{th:exists}, we know that there exists a minimal terrace $\mathcal{T}'=((p'_k),(U'_k))$ connecting 0 to $p$, such that any $p'_k$ and $U'_k$ is steeper than any other entire solution. Our goal is to prove that $\mathcal{T}'$ and $\mathcal{T}$ share the same platforms, so that $\mathcal{T}$ is minimal (and we then even have that the $U_k$ and $U'_k$ are the same up to some time shift as $c_k \neq 0$). Proceed by contradiction and assume this is not the case.

Note first that the family $(p'_k)_k$ is included in the family $(p_k)_k$. Let $i \geq 1$ be the smallest integer such that $p_i \neq p'_i$. Then there clearly exists $j >i$ such that
$$p'_{i-1}= p_{i-1} > p_i > p_j = p'_i .$$
One can now choose a real number $a$ large enough so that for all $x \in \mathbb{R}$:
$$U'_i (0,x) < p_i + H(a-x) (p_{i-1} - p_i) =:r(x).$$
Denote by $R(t,x)$ the solution of \eqref{eqn1} with initial datum $r(x)$. It is clear from our definitions that the minimal terrace connecting $p_i$ to $p_{i-1}$ is reduced to the single traveling wave $U_i$ with speed $c_i \neq 0$. From our Theorem~\ref{th:CV}, we thus know that $R(t,x)$ converges to $U_i$ and spreads with the speed $c_i$. Since $U'_i (t,x) < R (t,x)$ for all $x \in \R$ and $t \geq 0$, it follows that $c_i \geq c'_i$, where $c '_i$ is the speed of $U'_i$.

One can then proceed similarly to get that $c'_i \geq c_j$ where $c_j$ is the speed of $U_j$ the $j$-th front of the terrace $\mathcal{T}$. Hence, $c_j \leq c_i$. Under statement~$(a)$ of Theorem~\ref{th:unique_semi}, we have already reached a contradiction, and we conclude that $\mathcal{T}$ is a minimal terrace. In general, from the definition of a terrace, we only get that $c'_i=c_i = c_j$. Furthermore, as speeds of $\mathcal{T}$ are non zero, we have either $c'_i=c_i = c_j >0$ or $c'_i = c_i = c_j < 0$. Assume that the former holds true. As before, it is then known that $p_{i-1}$ is isolated and stable from below (see the proof of Theorem~\ref{th:min_speeds}). Under statement~$(b)$, one can use Assumption~\ref{assumption-speed1} and Theorem~\ref{th:lem_speeds}, as well as the fact that $U'_i$ is steeper than $U_i$, to get that $U'_i \equiv U_i$ up to some time shift. This is a contradiction as both fronts do not share the same asymptotics, namely 
$$U_i (-\infty,\cdot) = p_i (\cdot) > p'_i (\cdot) = U'_i (-\infty, \cdot).$$
The other case can be dealt with the same way. Having reached a contradiction, we get that $U'_i \equiv U_j$, and we can again conclude that $\mathcal{T}$ is a minimal terrace.\\

We now check that there is no other semi-minimal propagating terrace. Let $\mathcal{T}'' =((p''_k) , (U''_k))$ be any other semi-minimal terrace, and assume first that there is no component with zero speed. The argument above then applies, so that $\mathcal{T}''$, $\mathcal{T'}$ and $\mathcal{T}$ are one and the same terrace (up to time shifts), and thus $\mathcal{T}$ is the unique semi-minimal terrace. 

Let us now proceed by contradiction and assume that $c''_i =0$ for some~$i$. Since $\mathcal{T}$ is minimal, there exists some $j$ such that 
$$p_j \leq p''_i \leq p''_{i-1} \leq p_{j-1}.
$$
Furthermore, there exist two integers $i_1 < i$ and $i_2 \geq i$ such that
$$p''_{i_1}=p_{j-1} \mbox{ and } p''_{i_2} = p_{j}.$$
Then the solutions of \eqref{eqn1} with initial data 
\begin{eqnarray*}
&& p''_{i_1 +1} + H(1-x) (p''_{i_1} -p''_{i_1 +1}), \vspace{3pt} \\
&& p_{j} +  H(-x) (p_{j-1} - p_j ),\vspace{3pt} \\
&& p''_{i_2 } + H(-1-x) (p''_{i_2 -1} -p''_{i_2}),
\end{eqnarray*}
converge respectively, thanks to Theorem~\ref{th:CV}, to $U''_{i_1 +1}$, $U_j$ and $U''_{i_2}$. By the comparison principle, it immediately follows that $c''_{i_2} \leq c_j \leq c''_{i_1 +1}$. Since $\mathcal{T}''$ is a terrace and $c''_i =0$, we have that $c''_{i_1 +1} \leq 0$ and $c''_{i_2} \geq 0$. Thus $c''_i = c_j =0$, which contradicts the fact that $\mathcal{T}$ has only non zero speeds. This ends the proof of Theorem~\ref{th:unique_semi}.

\subsection{Uniqueness of the terrace}

We now prove Theorem~\ref{th:unique} and let $$\mathcal{T}= \left( (p_k)_{0 \leq k \leq N},(U_k)_{1 \leq k \leq N} \right)$$ be some terrace connecting 0 to $p$ with non zero speeds. Again, we first prove that it is minimal, then check that there is no other terrace.\\

The proof is very similar to the previous subsection. From Theorem~\ref{th:exists}, we know that there exists a minimal terrace $\mathcal{T}'=\left( (p'_k)_{0 \leq k \leq N'},(U'_k)_{1 \leq k \leq N'}\right)$ connecting 0 to $p$, such that any $p'_k$ and $U'_k$ is steeper than any other entire solution. Our goal is again to prove that $\mathcal{T}'$ and $\mathcal{T}$ are the same terrace up to time shifts.

As before, note first that the family $(p'_k)_{0 \leq k \leq N'}$ is included in the family $(p_k)_{0 \leq k \leq N}$. As each $U_k$ moves with speed $c_k \neq 0$, it follows that each $p'_k$ (for $1 \leq k \leq N'-1$) is isolated and stable from either above or below. Thanks to Assumption~\ref{assumption-speed2} and Theorem~\ref{th:lem_speeds}, we get that for each integer $1 \leq k \leq N'$, the speed $c'_k$ of $U'_k$ is minimal among the speeds of all traveling waves connecting some $q<p'_{k-1}$ to $p'_{k-1}$, but also maximal among the speeds of all traveling waves connecting $p'_k$ to some $q>p'_k$. In particular, for any $k$, there exist $i \leq j$ such that $c'_k \leq c_i$ and $c'_k \geq c_j$, $i$ and $j$ being chosen such that $U_i$ connects $p_i$ to $p_{i-1}=p'_{k-1}$ and $U_j$ connects $p_j=p'_{k}$ to $p_{j-1}$. Since $c_i \leq c_j$ by definition of a propagating terrace, we get that 
$c'_k =c_i =c_j$. As they cannot be zero, and using the second part of Theorem~\ref{th:lem_speeds}, we conclude that $i=k$ and $U_i \equiv U_j \equiv U'_k$ up to some time shifts. This immediately implies that $\mathcal{T}$ and $\mathcal{T}'$ are the same terrace up to time shifts.\\

We now check that there is no propagating terrace with a zero speed component. Proceed by contradiction and assume that there exists some $\mathcal{T}'' =((p''_k) , (U''_k))$ such that $c''_i =0$ for some~$i$. As above, since we now know $\mathcal{T}$ to be minimal, there exist some integers $j$, $i_1$ and $i_2$ such that
$$p''_{i_1}=p_{j-1} \geq p''_{i-1} \geq p''_i \geq p_j= p''_{i_2}.$$
Moreover, as we have proved that $\mathcal{T}$ is also the unique minimal terrace, $U_j$ is steeper than $U''_{i_1 +1}$ and $U''_{i_2}$. It also has non zero speed, and we can again use Assumption~\ref{assumption-speed2} to get that $U_j \equiv U''_{i_1 +1} \equiv U''_{i_2}$ up to time shifts. Thus, $i=i_2 = i_1 +1$ and $c_j =0$, which clearly contradicts our assumptions on $\mathcal{T}$. We conclude that any terrace has no zero speed and, by the argument above, is equal to the unique minimal terrace up to time shifts. Theorem~\ref{th:unique} is now proved.

\section{Discussion on the case with zero speeds}\label{sec:zero}

We now discuss the case when zero speeds occur, which was avoided in the statement of our uniqueness Theorems~\ref{th:unique_min}, \ref{th:unique_semi} and~\ref{th:unique}. Indeed, terraces can obviously no longer be unique up to time shifts anymore if they involve stationary waves. However, one could still expect the platforms of terraces to be unique under some appropriate assumption similar to Assumption~\ref{assumption-speed2}. We will see that this may not be true either, even when all platforms are assumed to be linearly stable. We will end this section with a more precise statement, along with a sketch of its proof.\\

Let us first look at the homogeneous case, that is when the equation~\eqref{eqn1} is invariant by space translation, and assume for simplicity that all platforms of any terrace is locally stable from both below and above. It was already proved in \cite{FifMcL77}, using ODE-inspired proofs, that there exists a unique terrace up to some space shifts, no matter if zero speeds occur or not. In particular, platforms of terraces are also unique.

Without giving the details, we point out that our proof of Theorem~\ref{th:lem_speeds} can easily be extended, in the homogeneous framework, to stationary waves provided that we replace all time shifts by space shifts. In particular, we could use the same argument as in Section~\ref{sec:uniqueness} to reach the same conclusion as in~\cite{FifMcL77}, at least under a stronger assumption similar to Assumption~\ref{assumption-speed2}.

More generally, the key idea is that if there exists a total foliation of stationary waves between any two platforms (which is given, in the homogeneous framework, by all space shifts of a single stationary wave), then all terraces share the same platforms. If not, one could find two distinct stationary waves intersecting each other arbitrarily close to a common upper stable state, and then apply the same method as in the second part of our proof of Theorem~\ref{th:lem_speeds} to reach a contradiction.\\

However, in the heterogeneous case, such a foliation does not always exist. What we know is the following:
\begin{theorem}\label{critical}
Let $q_1 < q_0$ be two stationary solutions of \eqref{eqn1-per}.
For any given $x_0 \in \R$ and $\alpha \in (q_1 (x_0), q_0 (x_0))$, there exists a monotonic in time entire solution $U_{x_0,\alpha}$ of \eqref{eqn1} such that $U_{x_0,\alpha}(0,x_0)=\alpha$, which is steeper than any other entire solution lying between~$q_0$ and~$q_1$.
\end{theorem}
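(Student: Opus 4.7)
My plan is to produce $U_{x_0,\alpha}$ as a carefully calibrated limit of Heaviside-type solutions, along the same lines as the construction of the minimal terrace in Section~\ref{sec:existence}, but with an extra intermediate value step that pins down the value $\alpha$ at the point $x_0$.

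For each $a \in \R$, let $u^{(a)}(t,x)$ denote the solution of~\eqref{eqn1} with the Heaviside-type initial datum $q_0(x)H(a-x)+q_1(x)H(x-a)$. Such a datum is steeper than every entire solution lying between $q_0$ and $q_1$, and by Lemma~\ref{lem:steep_arg2} this steepness property is preserved for $u^{(a)}(t,\cdot)$ at every $t>0$. Fix $T>0$. As $a\to+\infty$ (respectively $a\to-\infty$) the initial data converge in $L^\infty_{loc}$ to $q_0$ (respectively $q_1$), so by parabolic stability $u^{(a)}(T,x_0)\to q_0(x_0)$ (respectively $q_1(x_0)$). Continuity in $a$ and the intermediate value theorem then produce some $a(T)\in\R$ with $u^{(a(T))}(T,x_0)=\alpha$. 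Setting $v_T(t,x):=u^{(a(T))}(t+T,x)$, the function $v_T$ is a solution of~\eqref{eqn1} on $(-T,+\infty)\times\R$ satisfying $v_T(0,x_0)=\alpha$, and $v_T(t,\cdot)$ is steeper than every entire solution lying between $q_0$ and $q_1$ for each $t>-T$.

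Along any sequence $T_n\to+\infty$, parabolic compactness applied to the uniformly bounded family $\{v_{T_n}\}$ yields, up to extraction, a locally uniform limit $U(t,x)$ which is an entire solution of~\eqref{eqn1} lying between $q_1$ and $q_0$. Continuity of point evaluation gives $U(0,x_0)=\alpha$, and Lemma~\ref{lem:steep_arg1} transfers the steepness property to $U$: it is steeper than every other entire solution lying between $q_0$ and $q_1$.

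It remains to show that $U$ is monotone in time. For any $\tau\in\R$, the time shift $U(\cdot+\tau,\cdot)$ is itself an entire solution between $q_0$ and $q_1$, so $U$ is steeper than it and, symmetrically, $U(\cdot+\tau,\cdot)$ is steeper than $U$. Any coincidence $U(t,x_1)=U(t+\tau,x_1)$ therefore forces both $U(t,\cdot)\ge U(t+\tau,\cdot)$ and $U(t,\cdot)\le U(t+\tau,\cdot)$ on $(-\infty,x_1]$, hence $U(t,\cdot)\equiv U(t+\tau,\cdot)$ on that half--line and then everywhere by unique continuation for~\eqref{eqn1}. Consequently, for each $\tau\neq 0$ the profiles $U(t,\cdot)$ and $U(t+\tau,\cdot)$ are either pointwise identical or strictly ordered in one definite sense; a dyadic bisection argument excludes any non--trivial periodicity, leaving $U$ either stationary or strictly monotone in $t$. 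The main delicate point of the construction is the intermediate value calibration ensuring $v_T(0,x_0)=\alpha$ exactly; without this normalization, an arbitrary $\omega$--limit of a Heaviside solution would still produce a steepest entire solution, but one would lose control of which level set passes through $x_0$ at time $0$.
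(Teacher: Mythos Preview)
Your argument is correct and follows precisely the approach sketched in the paper (which attributes the result to~\cite{Nadin12}): shift Heaviside-type initial data so as to pin the value~$\alpha$ at the point~$x_0$, pass to the limit along a diverging sequence of times, and use Lemmas~\ref{lem:steep_arg1} and~\ref{lem:steep_arg2} to retain steepness. The monotonicity deduction from mutual steepness with time shifts is exactly the ``easily implies'' step the paper invokes in Section~\ref{sec:existence}; your dyadic bisection to rule out non-trivial periodicity is a clean way to make it explicit.
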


This theorem was proved in~\cite{Nadin12}, where those steepest entire solutions where denoted as ``critical waves". The proof is similar to the argument of Theorem~\ref{th:exists} in Section~\ref{sec:existence} except that, instead of looking at the large time behavior of one Heaviside type initial datum, the Heaviside type initial datum is shifted in order to pin the solution at the value $\alpha$ at the point $x_0$. 

Let us now assume that, for a given $x_0$ and $\alpha$, the critical wave $U_{x_0,\alpha}$ is not a spatially periodic solution of~\eqref{eqn1}. It then connects, as $x \to \pm \infty$, two spatially periodic stationary solutions, which we still denote by $q_1$ and $q_0$ without loss of generality. Two situations then occur. When the average speed of the level sets of this critical wave is not zero, then it is a pulsating traveling wave in the usual sense and, as explained before, it is the unique critical wave, up to time shifts, lying between $q_1$ and $q_0$. When the average speed of the level sets is zero and the equation is homogeneous, then it is a stationary wave, which is the unique critical wave up to space shifts. However, when the average speed of the critical wave is zero and the equation is heterogeneous, then the set of critical waves is ordered (by a straightforward steepness argument) but non trivial and may contain both stationary waves and monotonic in time entire solutions. More precisely, there may exist two critical stationary waves $U_{x,\alpha_1} < U_{x,\alpha_2}$ such that, for any $\alpha_1 < \alpha < \alpha_2$, the critical wave $U_{x,\alpha}$ is a monotonic in time entire solution which converges as $t \to \pm \infty$ to $U_{x,\alpha_1}$ and $U_{x,\alpha_2}$ (in one way or the other depending on the monotonicity).

The occurring of this new object in the dynamics leads to more complicated situations where non minimal terraces exist, even though all platforms are linearly stable. Indeed, let $q_0 > q_1 >q_2$ be three linearly stable states, such that there exists a two platforms semi-minimal terrace with zero speeds. We also assume that for some $\alpha_1 \in (q_1 (0),q_0 (0))$ and $\alpha_2 \in (q_2 (0), q_1 (0))$, the critical waves $U_{0,\alpha_1}$ and $U_{0,\alpha_2}$ (taken steeper than any entire solution lying between respectively $q_1$ and $q_0$, and $q_2$ and $q_1$) are respectively increasing and decreasing in time entire solutions. Then, we know by the arguments developed in this paper that the solution~$u$ associated with Heaviside type initial datum $q_2 (x) + H(-x) (q_0 - q_2)(x)$ converges locally uniformly to a stationary solution~$U$, which is steeper than any other entire solution. We now claim that it is a stationary wave connecting $q_2$ to $q_0$. If it does not, then either $U \geq q_1$ or $U \leq q_1$ (as before, $U$ should be a part of a minimal terrace, whose platforms are included in the decomposition $(q_0, q_1, q_2)$). Assume the former occurs. As in Section~\ref{sec:existence}, one can define for any large integer $n$ the smallest time~$\tau_n$ such that $u$ reaches the value $ \alpha_2$ at the point $nL$ and time $\tau_n$, and then check by a steepness argument that $u$ converges locally uniformly around $(\tau_n, nL)$ to the decreasing in time function $U_{0,\alpha_2}$. However, by definition of $\tau_n$, one has that $\partial_t u (\tau_n,nL) \geq 0$, a contradiction. A similar argument can be performed when $U \leq q_1$, and as announced, $U$ is a stationary wave connecting $q_2$ to $q_0$. We conclude that the minimal terrace only has one platform, while our initial terrace had two.\\

In our last example above, the speed of the upper critical wave was larger than the speed of the lower critical wave, in some sense to be made more rigourous below. Therefore, our initial terrace was not appropriate to describe propagation dynamics, although it matched our definition of a propagating terrace. This means that, whenever stationary waves appear, we need to distinguish between different situations.

Let $q_1 < q_0$ be two periodic stationary solutions that are connected by some pulsating wave, and denote by~$U_{x_0,\alpha}$ the critical waves betwen $q_1$ and $q_0$ as defined by Theorem~\ref{critical}. We will distinguish the following cases:
\begin{enumerate}[$(i)$]
\item There exist $x_0$ and $\alpha \in (q_1 (x_0),q_0 (x_0))$ such that the solution $U_{x_0,\alpha}$ is a pulsating traveling wave with speed $c \neq 0$ (then it is does not depend on $x_0$ and $\alpha$ up to time shifts). We say that $q_1$ and $q_0$ are connected with critical speed $c$.
\end{enumerate}
Otherwise, there exists a stationary wave connecting $q_1$ to $q_0$, and one of the following holds:
\begin{enumerate}[$(i)$]
\setcounter{enumi}1
\item For any $x_0$ and $\alpha \in (q_1 (x_0),q_0 (x_0))$, the solution $U_{x_0,\alpha}$ is a stationary wave. We say that $q_1$ and $q_0$ are connected with critical speed $0$.
\item There exist $x_0$ and $\alpha \in (q_1 (x_0),q_0 (x_0))$ such that the solution $U_{x_0,\alpha}$ is monotonically increasing in time and, for any $x'$ and $\alpha ' \in (q_1 (x'),q_0 (x'))$, the solution $U_{x',\alpha '}$ is never decreasing in time. We say that $q_1$ and $q_0$ are connected with critical speed $0^+$.
\item There exist $x_0$ and $\alpha \in (q_1 (x_0),q_0 (x_0))$ such that the solution $U_{x_0,\alpha}$ is monotonically decreasing in time and, for any $x'$ and $\alpha ' \in (q_1 (x'),q_0 (x'))$, the solution $U_{x',\alpha '}$ is never increasing in time. We say that $q_1$ and $q_0$ are connected with critical speed $0_-$.
\item There exist $x_0$ and $\alpha \in (q_1 (x_0),q_0 (x_0))$ such that the solution $U_{x_0,\alpha}$ is monotonically increasing in time, and another $x_1$ and $\beta \in (q_1 (x_1),q_0 (x_1))$ such that the solution $U_{x_1,\beta}$ is decreasing in time. We say that $q_1$ and $q_0$ are connected with critical speed $0^+_-$.
\end{enumerate}
We point out Theorem~1.7 in~\cite{DHZ2} for a situation where two steady states are connected with critical speed $0^+_-$. Other cases may be constructed in a similar fashion.

The discussion above leads us to order the zero speeds in the following way: for all $c >0$,
$$- c < 0 < c \ \ \mbox {  and  } -c < 0_- < 0^+_- < 0^+ < c.$$ 
Note that the set of admissible speeds is no longer fully ordered. We then formulate the theorem below:
\begin{theorem}\label{th:zero_speed_thm}
Assume that $q_0 >q_1 > q_2$ are three stationary solutions of \eqref{eqn1-per}, and that there exist $\delta >0$ and $g$ an $L$-periodic function such that $\mu_g \leq 0$ and
$$\partial_u f (x,u) \leq g \; \mbox{ for all } x \in  \R, \ u \in \left[ q_i (x)-\delta, q_i (x)+\delta \right] \mbox{ and } i=0,1,2.$$
\begin{enumerate}[$(i)$]
\item If $q_1 < q_0$ are connected with critical speed $c \neq 0^+_-$, then there does not exist a non minimal terrace connecting $q_1$ to $q_0$.
\item If $q_1< q_0$ and $q_2 < q_1$ are connected with critical speeds respectively $c_1$ and $c_2$ in the sense defined above. Then the minimal terrace has only one platform if and only if~$c_1 > c_2$ or $c_1 =c_2 = 0^+_-$.
\end{enumerate}\end{theorem}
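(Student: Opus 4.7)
The plan is to leverage Theorem~\ref{th:lem_speeds} together with its symmetric counterpart (obtained by exchanging the roles of the upper and lower stable states), and to combine these with the refined classification of critical waves supplied by Theorem~\ref{critical} and the discussion preceding the theorem. Throughout, the hypothesis that $\partial_u f \leq g$ with $\mu_g \leq 0$ holds in a $\delta$-neighborhood of each $q_i$ is tailored so that the hypotheses of Theorem~\ref{th:lem_speeds} hold with each $q_i$ playing the role of the strongly stable endpoint, either from below or from above.

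For part (i), suppose a non-minimal terrace $\bigl((r_k)_{0\leq k\leq M},(V_k)_{1\leq k\leq M}\bigr)$ with $M\geq 2$, $r_0=q_0$, $r_M=q_1$ and speeds $d_1\leq\cdots\leq d_M$ exists. Let $U$ be a critical wave from $q_1$ to $q_0$ given by Theorem~\ref{critical}; by construction $U$ is steeper than every entire solution lying in $[q_1,q_0]$, in particular than both $V_1$ (sharing upper limit $q_0$) and $V_M$ (sharing lower limit $q_1$). Theorem~\ref{th:lem_speeds} applied at the upper state $q_0$ gives $c\leq d_1$, where $c$ denotes the critical speed of $U$, while its symmetric version applied at $q_1$ (using the stability of $q_1$ from above) yields $c\geq d_M$. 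Combined with $d_1\leq d_M$ from the definition of a terrace, this forces $c=d_1=d_M$. If $c$ is a nonzero real number, the uniqueness clause of Theorem~\ref{th:lem_speeds} forces $U\equiv V_1$ up to a time shift, contradicting the fact that they have different lower limits ($q_1$ versus $r_1>q_1$). When $c\in\{0,0^+,0_-\}$ the same rigidity must be extracted from the one-sided time-monotonicity of $U$: the subsolution construction $-\kappa\phi$ based on $\mu_g\leq 0$ from the proof of Theorem~\ref{th:lem_speeds} applies in the non-moving frame, and the resulting strong-maximum-principle contradiction still pins $U$ to $V_1$. The case $c=0^+_-$ is excluded precisely because there the family of critical waves contains both monotonically increasing and monotonically decreasing orbits, so no single one-sided rigidity can be applied.

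For part (ii), suppose first that the minimal terrace between $q_2$ and $q_0$ has two platforms, so $q_1$ appears as a platform, and call the associated steepest waves $W_1$ and $W_2$ with speeds $\gamma_1\leq\gamma_2$. Applying part (i) to the pairs $(q_0,q_1)$ and $(q_1,q_2)$ separately identifies $W_1$ and $W_2$ with the critical waves, so $\gamma_1=c_1$ and $\gamma_2=c_2$, giving $c_1\leq c_2$ in the extended ordering; moreover, $c_1=c_2=0^+_-$ is incompatible with a steepest concatenation, by the same mixed-monotonicity obstruction encountered in part (i). Conversely, if $c_1<c_2$ strictly, or $c_1=c_2\notin\{0^+_-\}$, concatenating the two critical waves directly yields a two-platform terrace which must be the minimal one by the uniqueness results of Section~\ref{sec:uniqueness}. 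In the remaining cases $c_1>c_2$ or $c_1=c_2=0^+_-$, the minimal terrace must have a single platform; existence of the corresponding single wave follows from Theorem~\ref{th:exists} applied to the decomposition $(q_0,q_1,q_2)$, as in the example worked out just before the theorem statement.

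The main obstacle will be extending Theorem~\ref{th:lem_speeds} to critical waves whose speed belongs to $\{0,0^+,0_-\}$. The existing proof exploits time-periodicity of the difference of two waves in a frame moving at the nonzero speed $c$, which collapses when $c=0$ and the waves are only monotonic, but not periodic, in time. Replacing this with the steepest-orbit rigidity supplied by Theorem~\ref{critical}, while carefully distinguishing the four subcases $0$, $0^+$, $0_-$, and $0^+_-$, forms the technical heart of the proof.
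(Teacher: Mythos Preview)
Your overall architecture matches the paper's: both reduce the result to an extension of Theorem~\ref{th:lem_speeds} to the zero-speed regime. However, the extension you aim for is not the right one, and this is a genuine gap rather than a detail to be filled in.

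In part~$(i)$ you try to force $U\equiv V_1$ in the case $c\in\{0,0^+,0_-\}$, via the $-\kappa\phi$ subsolution and a sliding argument in the non-moving frame. But there is no reason this identification should hold: the critical stationary wave $U$ connects $q_1$ to $q_0$, while $V_1$ connects some $r_1>q_1$ to $q_0$, and it is perfectly consistent with steepness and the maximum principle that $U<V_1$ everywhere. The sliding construction in the proof of Theorem~\ref{th:lem_speeds} works because the two waves share \emph{both} endpoints and are periodic in the moving frame; here neither is true, and the argument does not produce an interior touching point. The paper's extension has a different conclusion: it asserts that if a critical wave is stationary or increasing in time, then any other monotone-in-time entire solution intersecting it sufficiently close to the stable endpoint must itself be increasing (or identically equal). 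Applied to the stationary $V_1$, this forces nearby critical waves to be \emph{decreasing} in time (else $V_1$ would have to be increasing, which it is not), hence $c\leq 0^+_-$; the symmetric argument at $q_1$ using $V_M$ gives $c\geq 0^+_-$, and therefore $c=0^+_-$. The contradiction comes from constraining the symbol $c$, not from identifying waves.

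A related issue pervades your handling of both parts: you write inequalities and equalities such as ``$c\leq d_1$'', ``$c=d_1=d_M$'', ``$\gamma_1=c_1$'' between the real speeds $d_k,\gamma_k$ of pulsating waves and the symbolic critical speeds $c,c_1,c_2\in\mathbb{R}\cup\{0,0^+,0_-,0^+_-\}$. These are not comparable objects in general; when $\gamma_1=0$, the symbol $c_1$ can be any of $0,0^+,0_-,0^+_-$, and the terrace inequality $\gamma_1\leq\gamma_2$ does not translate directly into $c_1\leq c_2$ in the partial order. The paper avoids this by arguing the forward direction of~$(ii)$ from the one-platform side: a single critical wave connecting $q_2$ to $q_0$ is compared near $q_0$ with critical waves between $q_1$ and $q_0$, and the monotonicity-transfer extension yields $c_1\geq 0^+_-$ (and symmetrically $c_2\leq 0^+_-$). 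Your contrapositive route through the two-platform case can be made to work, but only after the correct extension is in hand and the real/symbolic distinction is kept straight.
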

We only give a brief sketch of the proof, which follows the ideas exposed in the discussion above.\\

\begin{proof}
First, one may extend Theorem~\ref{th:lem_speeds}, to get that the speed of a critical wave has to be the slowest in a slightly stronger sense: if a critical wave is stationary or increasing in time, then any other monotonic in time entire solution that it intersects close enough to $q_1$ has to be either identically equal, or increasing in time. 

It follows that, if there exist both a critical stationary wave $U$ and a non minimal terrace $\mathcal{T}$ connecting $q_1$ to $q_0$, then $q_1$ and $q_0$ are connected with speed $c\leq 0^+_-$. Indeed, from our Theorem~\ref{th:lem_speeds}, one can easily check that any other propagating terrace connecting $q_1$ to $q_0$ has zero speeds only (this could also be proved using the convergence to critical waves from Heaviside type initial data). Then, the highest component of the non minimal terrace $\mathcal{T}$ provides some stationary wave $U_1$ connecting some $q'$ to $q_0$. By the extended Theorem~\ref{th:lem_speeds} described above, any critical wave intersecting $U_1$ close enough to $q_0$ has to be decreasing in time, hence $q_1$ and $q_0$ are connected with speed $c \leq 0_-^+$. One can get similarly, by looking close to $q_1$, that $c \geq 0^+_-$, thus $c=0^+_-$. This proves statement $(i)$ of Theorem~\ref{th:zero_speed_thm}.

Assume now that $q_1< q_0$ and $q_2 < q_1$ are connected with critical speeds respectively $c_1$ and $c_2$. On one hand, if there exists a critical traveling wave connecting $q_2$ to $q_0$ with speed $c \neq 0$, then we already know that $c_1 > c > c_2$ by Theorem~\ref{th:lem_speeds}. On the other hand, if there exists some critical stationary wave connecting $q_2$ to $q_0$, then it intersects, arbitrarily close to $q_0$, some critical wave $U_1$ connecting $q_1$ to $q_0$. By the extended Theorem~\ref{th:lem_speeds}, $U_1$ is increasing in time, hence $c_1 \geq 0_-^+$. Similarly, one gets that $c_2 \leq 0_-^+$. Thus, there can be a one platform minimal terrace with zero speed only if $c_ 2 \leq 0_-^+ \leq c_1$. Conversely, if $c_1 > c_2$ or $c_1 =c_2 = 0_-^+$, one can proceed as in our previous example above (earlier in this section), looking at the large time behavior from some Heaviside type initial datum, to get that the minimal terrace has only one platform.\end{proof}

\section{A few examples}\label{sec:exemples}

In this work, we have displayed general results in order to describe propagating terraces, and to offer a wide and natural extension of the classical notion of traveling waves. In this section, we illustrate how those results can be applied to several examples, ranging from the standard bistable case to some multistable nonlinearities. 

\subsection{Bistable case}

We first consider $f$ of the bistable type (see Figure~\ref{fig:bistable} for the typical homogeneous case). More precisely, 0 and $p$ are asymptotically stable, respectively from above and below, with respect to \eqref{eqn1-per}, and any other stationary solution $p_1$ of \eqref{eqn1-per} is unstable. Moreover, by unstable we mean that any solution of \eqref{eqn1-per} starting from either below or above $p_1$ diverges from $p_1$ (and thus, it converges to either 0 or $p$ as $t \to +\infty$).

We also make the technical assumption that for any such unstable solution $p_1$, any traveling wave connecting~$p_1$ to $p$ has to be strictly faster than any traveling wave connecting $0$ to $p_1$. This assumption is satisfied, for instance, if any such $p_1$ is linearly unstable.
\begin{figure}[h]
\centering
\includegraphics[width=.6\textwidth]{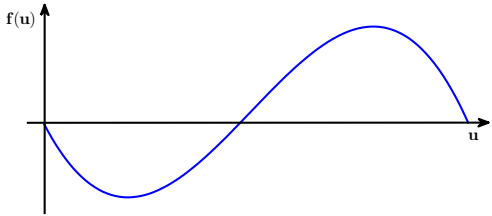}
\caption{Bistable nonlinearity}
\label{fig:bistable}
\end{figure}

We give the following result:

\begin{theorem}\label{th:bistable}
If $f$ is bistable in the sense above, then there exists a traveling wave connecting 0 to $p$.

If moreover Assumption~\ref{assumption-speed2} holds, then there exists a unique admissible speed $c \in \R$ for pulsating traveling waves. When $c \neq 0$, then the profile of the pulsating traveling wave is unique up to time shifts.
\end{theorem}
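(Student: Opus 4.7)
The plan is as follows. The existence argument begins by noting that the set of $L$-periodic stationary solutions of~\eqref{eqn1-per} lying in $[0,p]$ is finite. Indeed, any intermediate stationary solution $q$ with $0<q<p$ is unstable in the strong sense assumed, which forces $q$ to be isolated: otherwise a nearby stationary solution would remain close to $q$ forever and so would not drift away, contradicting the instability hypothesis. Combined with the isolation of $0$ and $p$ (coming from their asymptotic stability) and the compactness of the set of stationary solutions bounded by $p$, this yields finiteness. Proposition~\ref{proposition} then provides a decomposition, and Theorem~\ref{th:exists} yields a minimal propagating terrace $\mathcal{T} = ((p_k)_{0\leq k\leq N},(U_k)_{1\leq k\leq N})$ connecting $0$ to $p$.

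The crux is to show $N=1$, so that $U_1$ is the desired single wave. I would argue by contradiction: assume $N\geq 2$. The key observation is that the existence of any pulsating traveling wave $V$ from $0$ to $p_1$ already yields a contradiction. Indeed, $(U_1,V)$ would then form a decomposition between $0$ and $p$ with stationary levels $\{0,p_1,p\}$, and Theorem~\ref{th:exists} extracts from it a minimal terrace $\mathcal{T}'$ whose stationary levels lie in $\{0,p_1,p\}$. A two-platform version of $\mathcal{T}'$ is ruled out by the technical assumption, which forces the speed of the upper wave (from $p_1$ to $p$) to strictly exceed that of the lower wave (from $0$ to $p_1$), violating the required ordering of speeds; so $\mathcal{T}'$ must consist of a direct pulsating wave from $0$ to $p$. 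Corollary~\ref{th:exists_cor} then shows that the original minimal terrace $\mathcal{T}$ also has a single platform, contradicting $N\geq 2$. It remains to produce such a wave $V$ from $0$ to $p_1$, which is the main obstacle of the argument. For $N=2$ this follows at once from the Matano--Weinberger argument of Proposition~\ref{proposition}, since then $p_1=p_{N-1}$ is the consecutive stationary state above $0$. For $N\geq 3$, I would induct on the number of intermediate stationary solutions lying in $[0,p_1]$, applying the same contradiction scheme to the restricted problem and exploiting that every intermediate level remains unstable within this subproblem.

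For the uniqueness part under Assumption~\ref{assumption-speed2}, the steepest wave from $0$ to $p$ (provided by Theorem~\ref{th:exists}) is simultaneously the slowest, by Theorem~\ref{th:lem_speeds} applied at the stable upper state $p$, and the fastest, by its dual version (noted in the remark following that theorem) applied at the stable lower state $0$; both stability hypotheses are supplied by Assumption~\ref{assumption-speed2}. Hence all admissible pulsating traveling waves share a common speed $c\in\R$. When $c\neq 0$, uniqueness of the profile up to time shifts follows directly from Theorem~\ref{th:unique} applied to the one-platform terrace.
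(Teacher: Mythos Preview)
Your existence argument has a genuine gap in the step $N\geq 3$. The induction you propose cannot work: the restricted problem on $[0,p_1]$ is not bistable, since $p_1$---being an intermediate periodic stationary state---is unstable by hypothesis; neither the bistability assumption nor the technical speed-comparison hypothesis (which specifically compares waves terminating at $p$ with waves emanating from $0$) carries over to this subproblem. In fact the wave $V$ you seek from $0$ to $p_1$ cannot exist once $N\geq 3$: if it did, $(U_1,V)$ would be a three-level decomposition, and Corollary~\ref{th:exists_cor} would force the platforms of the minimal terrace $\mathcal{T}$ to lie inside $\{0,p_1,p\}$, whence $N\leq 2$. (For $N=2$ your conclusion is right but the justification is not: $p_1$ need not be the stationary state consecutive to $0$; rather $V=U_2$ already does the job.)

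The paper sidesteps all of this by exploiting the instability of $p_1$ directly, rather than trying to recover bistability on a subinterval. Fixing any intermediate unstable $p_1$ (its existence coming from~\cite{Matano84}), the fact that $p_1$ is unstable from both sides while $0$ and $p$ are stable makes the equation of monostable type on each of $[0,p_1]$ and $[p_1,p]$; Weinberger's theorem~\cite{Wein02} then supplies a wave on each piece, producing a three-level decomposition $\{0,p_1,p\}$ with no induction or finiteness argument needed. Corollary~\ref{th:exists_cor} now gives $N\leq 2$, and $N=2$ is ruled out by the technical assumption on speeds. Your uniqueness argument is correct and essentially matches the paper's.
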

In~\cite{DGM}, we only proved the existence part under the additional assumption that propagation occurs to $p$ for some compactly supported initial datum. The full existence result was obtained in~\cite{FZ11}, which also dealt with more general discrete and continuous monotone semiflows that we do not consider here. In a similar spirit that consists in looking at the bistable equation as a combination of two monostable parts with opposite speeds, we give an alternative proof which easily follows from our main results. Similar uniqueness results on bistable pulsating waves also recently appeared in the literature; we refer to~\cite{BH12,DHZ}.\\

\begin{proof} Let us first prove that the spatially periodic and bistable (in the sense above) equation admits a pulsating traveling wave. Note first that an intermediate unstable solution of \eqref{eqn1-per} necessarily exists~\cite{Matano84}. Let $p_1$ be such a solution. Then $f$ is of the monostable type both between~$p_1$ and~$p$, and between~0 and~$p_1$, which gives us a decomposition by~\cite{Wein02}, hence a minimal terrace by our Theorem~\ref{th:exists}. Moreover, this terrace is made of a single traveling wave connecting~0 to~$p$. Indeed, if it is not, then it must be made of two pulsating traveling waves connecting respectively~$0$ to~$p_1$ and $p_1$ to $p$. 
However, as all traveling waves between~0 and $p_1$ have strictly slower speed than traveling waves between $p_1$ and~$p$ (this is by our bistable assumption; we also point out that the same hypothesis was made in~\cite{FZ11}), this contradicts our definition of a terrace, namely the fact that speeds must be ordered.

Furthermore, assume now that Assumption~\ref{assumption-speed2} holds. This in particular includes the case when 0 and $p$ are linearly stable, or when $f$ is nonincreasing in some neighborhoods of both 0 and $p$. Then thanks to Theorem~\ref{th:unique} and unless it has zero speed, the traveling wave we just constructed is also the only one connecting 0 to $p$. In other words, the bistable pulsating traveling wave is unique. When the traveling wave is stationary and repeating the same argument, we may only infer that all terraces are made of a single stationary traveling wave: this means that there exists a unique admissible speed for bistable pulsating traveling waves, although there may be different profiles. \end{proof}\\

Let us point out that the existence part of the above theorem is in fact a particular case of Proposition~\ref{proposition}.

\subsection{``Monostable + Bistable" and tristable cases}

We now want to look at some two platform cases. We assume that $f$ is of the bistable type between $p_1$ and $p$ for some positive and periodic stationary solution $p_1 < p$, and either bistable or monostable between 0 and $p_1$, as shown in Figure~\ref{fig:tristable}. We also assume that Assumption~\ref{assumption-speed1} holds.

\begin{figure}[h]
\centering
\includegraphics[width=0.9\textwidth]{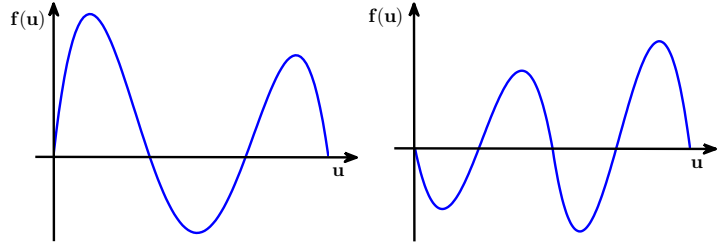}
\caption{(left) Monostable + bistable nonlinearity;\\ (right) Tristable nonlinearity}
\label{fig:tristable}
\end{figure}
Denote by $U_1$ and $U_2$ the unique pulsating waves with minimal speeds $c_1 \neq 0$ and $c_2 \neq 0$ of, respectively, the upper and lower parts. As before, existence of a decomposition immediately gives us existence of a minimal terrace, whose platforms are included in the set $\{ 0, p_1, p \}$. One can check that the minimal terrace has only one platform if and only if $c_1 > c_2$. Indeed,  from Theorem~\ref{th:lem_speeds}, the fronts $U_1$ and $U_2$ are also the steepest traveling waves on their respective intervals. Hence, if $c_1 \leq c_2$, they form a semi-minimal terrace which is also the unique minimal terrace by Theorem~\ref{th:unique_semi}. On the other hand, we have just shown, in the bistable case, that there exists no traveling wave connecting $p_1$ to $p$ with some speed $c \neq c_1$. Therefore, if $c_1 > c_2$, there cannot exist a two platforms minimal terrace. In this case, the minimal terrace has only one platform and its single wave moves with some speed $c \in (c_2, c_1 ) \subset \R^+$ (from either Theorem~\ref{th:lem_speeds} or~\ref{th:CV}). It is also the unique semi-minimal terrace by Theorem~\ref{th:unique_semi}.

Moreover, if the lower part is bistable, it follows from Theorem~\ref{th:unique} that there is no non minimal terrace. On the other hand, if the lower part is monostable, then there is a continuum of admissible speeds $[c_2,+\infty)$ for traveling waves connecting 0 to $p_1$ and, in particular, there always exist non minimal and two platforms terraces connecting 0 to $p$. Note that in this particular case, choosing initial data with slower decay as $x \rightarrow +\infty$, one may construct solutions of the Cauchy problem converging to non minimal terraces as $t \to +\infty$.


\subsection{Quadristable case}

We conclude with a three platforms example. Let us assume that $f$ is quadristable, that is there exist exactly four stable equilibria $p> p_1 > p_2>0$, and all of them are linearly stable, such as in Figure~\ref{fig:quadristable}.

\begin{figure}[h]
\centering
\includegraphics[width=0.9\textwidth]{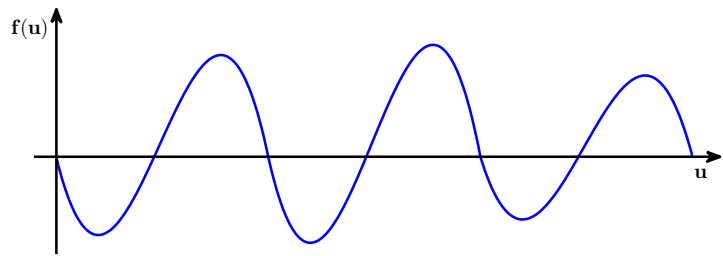}
\caption{Quadristable nonlinearity}
\label{fig:quadristable}
\end{figure}
From the bistable case, we know that there exist three traveling waves $U_1$, $U_2$ and $U_3$ connecting respectively $p_1$ to $p$, $p_2$ to $p_1$ and 0 to $p_2$, that we now assume to move with non zero speeds. If $c_1 \leq c_2 \leq c_3$, then it is a terrace, which is unique according to Theorem~\ref{th:unique}. If $c_1 > c_2$, then we have just shown that there exists a traveling wave connecting $p_1$ to $p$ with speed $c$, so that we are back to the two platforms case. If $c_3 > c$, then the unique (minimal) terrace has two platforms, while if $c_3<c$, then the unique terrace only has one platform. The difficulty is that we do not know a priori what the speed $c$ is, only that $c \in (c_2,c_1)$ thanks to Theorem~\ref{th:lem_speeds}. Thus, knowing the speeds $c_1$, $c_2$ and $c_3$ is not enough to conclude in general. The case $c_3 < c_2$ can be treated similarly.\\

The argument described above gives the sketch of how, by reiterating, one can consider more and more complex nonlinearities (even more degenerate ones, for instance of the ignition type) and extract terraces from any decomposition using our main theorems.


\end{document}